\documentclass[11pt,oneside,letter]{article}
\pagestyle{myheadings} % used to put on top of each page heading defined in \markboth

%%% standard pachages
%\usepackage{booktabs,amsthm, amsmath, amssymb, amsfonts, graphicx, epsfig, enumitem,bbm}
\usepackage{amsthm, amsmath, amssymb, amsfonts, graphicx, epsfig, bbm}
%%%

%%%%%%%%%%%%%%%%%%%%%%%%%%%%%%%%%%%%
% Package to make special curl fonts, by using \mathscr{F}
\usepackage{mathrsfs}
%%%%%%%%%%%%%%%%%%%%%%%%%%%%%%%%%%%%

%%%%%%%%%%%%%%%%%%%%%%%%%%%%%%%%%%%%
% This package gives the enumerate environment an optional argument
% which determines the style in which the counter is printed.
% http://www.ctex.org/documents/packages/table/enumerate.pdf
%\usepackage{enumitem}
\usepackage{enumerate}
%%%%%%%%%%%%%%%%%%%%%%%%%%%%%%%%%%%%

%%%%%%%%%%%%%%%%%%%%%%%%%%%%%%%%%%%%
% To display the labels used in a tex file in the dvi file (for example, if a theorem is labelled with the \label command) use the package
%\usepackage{showkeys}
%This will display all labels (for example, in the case of a labelled theorem, the label of the theorem will occur in the margin of the dvi file.
%The command above by itself not only displays labels when they are first named but also when they are cited (or referenced). To disable this feature use the following %options with the showkeys package
%\usepackage[notref, notcite]{showkeys}
%\usepackage[usenames,dvipsnames]{color}
%\providecommand*\showkeyslabelformat[1]{{\normalfont \footnotesize#1}}
%\usepackage[notref,notcite,color]{showkeys}
%\definecolor{labelkey}{rgb}{0,0,1}
%%%%%%%%%%%%%%%%%%%%%%%%%%%%%%%%%%%%%%

%%%%%%%%%%%%%%%%%%%%%%%%%%%%%%%%%%%%
% It extends the functionality of all the LATEX cross-referencing commands (including the table of contents, bibliographies etc) to produce \special commands which a driver can turn into hypertext links; it also provides new commands to allow the user to write ad hoc hypertext links, including those to external documents and URLs.
% http://www.tug.org/applications/hyperref/manual.html
\usepackage[colorlinks=true, pdfstartview=FitV, linkcolor=blue,
            citecolor=blue, urlcolor=blue]{hyperref}
\usepackage[usenames]{color}
%\definecolor{Red}{rgb}{0.7,0,0.1}
%\definecolor{Green}{rgb}{0,0.7,0}
%%%%%%%%%%%%%%%%%%%%%%%%%%%%%%%%%%%%

%%%%%%%%%%%%%%%%%%%%%%%%%%%%%%%%%%%%
% It is a LaTeX package to act as generalized
% interface for standard and non-standard bibliographic style files (BibTeX).
% http://www.ctan.org/tex-archive/macros/latex/contrib/natbib/
%\usepackage[numbers]{natbib}
\usepackage[numbers]{natbib}
%%%%%%%%%%%%%%%%%%%%%%%%%%%%%%%%%%%%

%%%%%%%%%%%%%%%%%%%%%%%%%%%%%%%%%%%%
% a good looking way to format urls
% http://mirror.its.uidaho.edu/pub/tex-archive/help/Catalogue/entries/url.html
\usepackage{url}
% Define a new 'leo' style for the package that will use a smaller font.
\makeatletter\def\url@leostyle{%
 \@ifundefined{selectfont}{\def\UrlFont{\sf}}{\def\UrlFont{\scriptsize\ttfamily}}} \makeatother\urlstyle{leo}
%%%%%%%%%%%%%%%%%%%%%%%%%%%%%%%%%%%%%

% to add accents and comments
\usepackage{accents, comment}

% to use strike out \sout
\usepackage[normalem]{ulem}

%% custom margins

\setlength{\voffset}{-0.5in}
\setlength{\hoffset}{-0.5in}
\setlength{\textheight}{8.5in}
\setlength{\textwidth}{6in}
%% or use geometry pacakge
%\usepackage[margin=1.1in, dvips, letterpaper]{geometry}

%%%%%%%%%%%%%%%%%%%%%%%%%%%%%%%%%%%%
\newtheorem{theorem}{Theorem}
\newtheorem{proposition}[theorem]{Proposition}
\newtheorem{lemma}[theorem]{Lemma}
\newtheorem{corollary}[theorem]{Corollary}
\newtheorem{definition}[theorem]{Definition}

\theoremstyle{definition}

\newtheorem{example}[theorem]{Example}
\theoremstyle{remark}
\newtheorem{remark}[theorem]{Remark}

\numberwithin{equation}{section}
\numberwithin{theorem}{section}
%\renewcommand{\labelitemi}{ {\small $\rhd$}}

%%%%%%%%%%%%%%%%%%%%%%%%%%%%%%%%%%%%%
%%% used for editing and making comments in color
% Example \ig{Remarks and Commets}

%%%%%%%%%%%%%%%%%%%%%%%%%%%%%%%%%%%%%

%%%%%%%%%%%%%%%%%%%%%%%%%%%%%%%%%%%%%
%%%     Igor's macros
%% \mathcal Letters
\def\cA{\mathcal{A}}

\def\cF{\mathcal{F}}

\def\cY{\mathcal{Y}}

%% \mathbb Letters

\def\bE{\mathbb{E}}

\def\bP{\mathbb{P}}
\def\bQ{\mathbb{Q}}

%% \mathbf Letters

%%env
\def\bse{\begin{equation}\begin{split}}
\def\ese{\end{split}\end{equation}}

%%%%%%%%%%%%%%%%%%%%%%%%%%%%%%%%%%%%%%%%%%%%%%%%%%%%%%%%%%%%%%%%%%%%%%%%%%%%%%%%%
%%%%%%%%%%%%%%%%%%   Nonstandard notations  %%%%%%%%%%%%%%%%%%%%%%%%%%%%%%%%%%%%%
                        % \e for \epslion
                    % \ve for \varepsilon
      % partial derivative
%\newcommand{\indFn}[1]{1 \! \! 1_{#1}} % indicator function, old version
\newcommand{\1}{\mathbbm{1}}                     % preferable way of writing indicator function
\newcommand{\set}[1]{{\{#1\}}}            % set: {xyz} to be used for inline formulas
\newcommand{\Set}[1]{\left\{#1\right\}} % set: {xyz} to be used for seapare (not inline) formulas
\renewcommand{\mid}{\;|\;}              % mid bar with small spaces before and after: x | y
          % big bar with small spaces before and after:
       % mid bar with small spaces before and after: x | y
\newcommand{\abs}[1]{\left\vert#1\right\vert}   % absolute value
\usepackage{cancel}
\usepackage[T1]{fontenc}
\usepackage[utf8]{inputenc}
\usepackage{varwidth}
\usepackage{upgreek}
\title{ Conditional Markov Chains\\ Part II: Consistency and Copulae }
\author{Tomasz R.  Bielecki$^1$ \\[-0.3ex]
\url{bielecki@iit.edu} \\[-0.3ex]
\and
Jacek Jakubowski$^{2,3}$ \\[-0.3ex]
\url{jakub@mimuw.edu.pl} \\[-0.3ex]
\and
%Mariusz Niew\k{e}g\l owski
Mariusz Niew\k{e}g\l owski$^3$
\\[-0.3ex]
\url{m.nieweglowski@mini.pw.edu.pl} \\[-0.3ex]
\and \small{$^1$Department of Applied Mathematics,}\\[-0.3ex]
\small{Illinois Institute of Technology,}\\[-0.3ex]
\small{Chicago, IL 60616, USA }\\[-0.3ex]
\and
%Jacek Jakubowski \\[-0.3ex]
\small{$^2$Institute of Mathematics,}\\[-0.3ex]
\small{University of Warsaw,}\\[-0.3ex]
\small{Banacha 2,
%}\\[-0.3ex]
%\small{
02-097 Warszawa, Poland}\\[-0.3ex]
%\small{AND} \\[-0.3ex]
%\small{Faculty of Mathematics
%%}\\[-0.3ex]
%%\small{
%and Information Science,}\\[-0.3ex]
%\small{Warsaw University of Technology,}\\[-0.3ex]
%\small{ul. Koszykowa 75,
%%	}\\[-0.3ex]
%%	\small{
%00-662 Warszawa, Poland}\\[-0.3ex]
\and
%Mariusz Niew\k{e}g\l owski
\\[-0.3ex]
\small{$^3$Faculty of Mathematics
%}\\[-0.3ex]
%\small{
and Information Science,}\\[-0.3ex]
\small{Warsaw University of Technology,}\\[-0.3ex]
\small{ul. Koszykowa 75,
%	}\\[-0.3ex]
%	\small{
00-662 Warszawa, Poland}\\[-0.3ex]
}

\date{\today}

\binoppenalty=\maxdimen %no breaks inside math formulas..
\relpenalty=\maxdimen

%
%
%
%%\documentclass[11pt,oneside,letter]{article}
%\documentclass[12pt,twoside]{amsart}
%%{article}
%
%%\usepackage{hyperref}
%%\usepackage{showkeys}
%\usepackage{graphicx}
%\usepackage{cancel}
%\usepackage[usenames]{color}
%% THEOREMS -------------------------------------------------------
%\usepackage{amsthm, amsmath, amssymb, amsfonts, graphicx, epsfig, mathtools}
%%\usepackage{amssymb}
%%\usepackage{amsfonts}
%%\usepackage{amsmath}
%\pagestyle{myheadings}	
\usepackage{dsfont}
%\usepackage{lscape}
%\usepackage[normalem]{ulem}
%\usepackage[T1]{fontenc}
%
%%NOWE USTAWIENIA
%\textwidth 6.7 in \textheight 9 in \oddsidemargin -.35 in
%\evensidemargin  -.35 in
%
%%STARE USTAWIENIA
%%\textwidth 6 in \textheight 9 in \oddsidemargin -.35 in
%%\evensidemargin  -.35 in
%%%\evensidemargin  .45 in
%\topmargin 0 in \baselineskip = 13 pt \hfuzz 30 pt
%\parindent  15 pt
%\parskip = 4 pt
%
%\newtheorem{theorem}{Theorem}[section]
%\newtheorem{proposition}[theorem]{Proposition}
%\newtheorem{lemma}[theorem]{Lemma}
%\newtheorem{corollary}[theorem]{Corollary}
%%
%%
%%%%% A numbered theorem with a fancy name: %%%
%\newtheorem{maintheorem}[theorem]{Main Theorem}
%%%%% Definitions, remarks and examples are NOT italicized:  %%%%
%\theoremstyle{definition}
%\newtheorem{definition}[theorem]{Definition}
%\newtheorem{example}[theorem]{Example}
%\newtheorem{remark}[theorem]{Remark}

%\newtheorem{thm}{Theorem}[section]
%\newtheorem{lem}[thm]{Lemma}
%\newtheorem{prop}[thm]{proposition}
%\newtheorem{cor}[thm]{Corollary}
%\newtheorem{defn}[thm]{Definition}
%%\newtheorem{axiom}{Definition}[section]
%\newtheorem{rem}[thm]{Remark}
%\newtheorem{ex}[thm]{Example}
%\newtheorem{hyp}[thm]{Assumption}

\newcommand{\be}{\begin{equation}}
\newcommand{\ee}{\end{equation}}
\newcommand{\bde}{\begin{displaymath}}
\newcommand{\ede}{\end{displaymath}}
\newcommand{\beq}{\begin{eqnarray*}}
\newcommand{\eeq}{\end{eqnarray*}}
\newcommand{\beqa}{\begin{eqnarray}}
\newcommand{\eeqa}{\end{eqnarray}}
\newcommand{\bel }{\left\{\begin{array}{ll}}
\newcommand{\eel}{\cr \end{array} \right.}
\newcommand{\bd}{\begin{definition} \rm }
\newcommand{\ed}{\end{definition} \rm }
 \newcommand{\bex}{\begin{example} \rm }
\newcommand{\eex}{\end{example}}
\newcommand{\bt}{\begin{theorem}}
\newcommand{\et}{\end{theorem}}
\newcommand{\bl}{\begin{lemma}}
\newcommand{\el}{\end{lemma}}
\newcommand{\bp}{\begin{proposition}}
\newcommand{\ep}{\end{proposition}}
\newcommand{\bcor}{\begin{corollary}}
\newcommand{\ecor}{\end{corollary}}
\newcommand{\lab }{\label }
\newcommand{\brem}{\begin{remark}}
\newcommand{\erem}{\end{remark}}

\def\proof{\noindent {\it Proof. $\, $}}
\def\finproof{\hfill $\Box$ \vskip 5 pt}

\def\I{\mathds{1}}

\def \wh{\widehat}
\def \wt{\widetilde}
\def\r{\mathbb R}
\def\F{{\mathcal F}}

\def\G{{\mathcal G}}

\def\FF{{\mathbb F}}

\def\GG{{\mathbb G}}
\def\II{{\mathbb I}}

\def\P{\mathbb P}
 \def\Q{\mathbb Q}

\def\EP{{\mathbb E}_{\mathbb P}}

\def\E {{\mathbb E} }
\def\tT{{t\in[0,T]}}

\DeclareMathAlphabet\mathbfcal{OMS}{cmsy}{b}{n}

%various
%%%%%%%%%%%%%%%%%%%%%%%% author predifined macros

%0.58, 0.27, 0.57 sliwka

%\newcommand{\jm}{}
%%%%%%%%%%%%%%%%%%%%%%%%

 \mathchardef\mhyphen="2D

    \usepackage{fancyhdr}
    \usepackage{lastpage}
    \pagestyle{fancy}
    \fancyhf{} % remove everything
     % remove lines as well
    
    \lhead{\texttt{\today}}
    \chead{ CMC: Consistency and Copulae }
    \rhead{\thepage\ of \pageref{LastPage} }
%%%

%\title{{\Large \bf
%On Multivariate Conditional Markov Chains,
%Markov Consistency and Markov Copulae
%\\
%lub
%\\	
%Markov Consistency and Markov Copulae for Conditional Multivariate Markov Chains
%}  \vskip 80 pt}
%\author{Tomasz R.  Bielecki}\thanks{Research of T.R. Bielecki was partially supported by NSF Grant DMS-0604789 and NSF Grant DMS-0908099.}
%\address{
%Department of Applied Mathematics \\
%Illinois Institute of Technology, \\
%Chicago, IL 60616, USA \\
%E-mail: {\tt bielecki@iit.edu }
%}
%\author{Jacek Jakubowski}\thanks{Research of J. Jakubowski and M. Niew\k{e}g\l owski was partially supported by Polish MNiSW grant XXXXXX.}
%\address{
%Institute of Mathematics, University
%of Warsaw \\
% ul. Banacha 2, 02-097 Warszawa, Poland \\
% and \\
% Faculty of Mathematics and Information Science,  \\
%Warsaw University of
%Technology \\
%E-mail: {\tt jakub@mimuw.edu.pl }
%}
%
%\author{Mariusz Niew\k{e}g\L owski}
%\address{
%Faculty of Mathematics and
%Information Science, Warsaw University of Technology  \\
%ul. Koszykowa 75, 00-662 Warszawa, Poland \\
%E-mail:  {\tt
%M.Nieweglowski@mini.pw.edu.pl}
%}
%
%\date{ \today}
\begin{document}
\maketitle
\begin{abstract}
In this paper we continue the study of conditional Markov chains (CMCs) with finite state spaces, that we initiated in Bielecki, Jakubowski and Nieweglowski (2015). Here, we turn our attention to the study of Markov consistency and Markov copulae with regard to CMCs, and thus we follow up on the study of Markov consistency and Markov copulae for ordinary Markov chains that we presented in Bielecki, Jakubowski and Nieweglowski (2013).
\\
{\noindent \small
{\it \bf Keywords:} Multivariate Markov
chain; compensator of random measure; dependence; marginal law;
Markov consistency; Markov copulae;
Markovian coupling.
 \\{\it \bf MSC2010:} 60J27; 60G55. }
\end{abstract}
\tableofcontents

%%%%%%%%%%%%%%%%%%%%%%%%%%%%%%%%%%%%%%%%%%%%%%%%%%%%%%%%%
\section{Introduction}
%%%%%%%%%%%%%%%%%%%%%%%%%%%%%%%%%%%%%%%%%%%%%%%%%%%%%%%%%
In this paper we continue the study of conditional Markov chains (CMCs) with finite state spaces, that we initiated in Bielecki, Jakubowski and Niew\k{e}g\l owski \cite{BieJakNie2015}  and continued in \cite{BieJakNie2014a}. Here, we turn our attention to the study of Markov consistency and Markov copulae with regard to CMCs,which are doubly stochastic Markov chains (DSMC) and thus we follow up on the study of Markov consistency and Markov copulae for ordinary Markov chains that we presented in Bielecki, Jakubowski and Niew\k{e}g\l owski  \cite{BieJakNie2013}.

The concepts of Markov consistency and Markov copulae were developed in the context of the problem of modeling of multivariate stochastic processes in such a way that distributional laws of the univariate components of a multivariate process agree with given, predetermined laws (cf. Bielecki, Vidozzi and Vidozzi \cite{BieVidVid2008}, Bielecki, Jakubowski,  Vidozzi and Vidozzi  \cite{BieJakVidVid2008}, Bielecki, Jakubowski and Niew\k{e}g\l owski \cite{BieJakNie2010} and \cite{BieJakNie2013}, Liang and Dong \cite{LiangDong}). So, essentially, this is a problem of modeling (non-trivial%\footnote{"Non-triviality" is of course a relative concept. For example, in case  }
) dependence between (univariate) stochastic processes with given laws.

Modeling of dependence between stochastic processes is a very important issue arising from many different  applications, among others in finance and in insurance. In \cite{BieJakNie2013} we focused on modeling dependence between ordinary Markov chains.  Specifically, we studied the general problem of constructing a multivariate Markov chain such that its components have laws identical with the laws of given Markov chains.

In this paper we elevate the study done in \cite{BieJakNie2013} to the world of conditional Markov chains. This
is done in response to the need for modeling dependence between dynamic systems in  cases when some conditional properties of a system are important and should be accounted for. We refer to Section \ref{BGW}, where we discuss a relevant practical problem.

We introduce and study the concepts of {\it strong Markovian consistency} and {\it weak Markovian
consistency} for conditional Markov chains.  Accordingly, we introduce and study the concepts of strong Markov copulae and weak Markov copulae for conditional Markov chains, which we call {\it strong CMC copulae} and {\it weak CMC copulae}, respectively. We refer to the discussion of practical relevance of the concepts of strong/weak Markov copulae that was done in  \cite{BieJakNie2013}
({see also Bielecki, Cousin, Cr\'{e}pey and Herbertsson \cite{BCCH2014} and Jakubowski and Pytel \cite{JakPyt2015}}). Much of what was said there applies in the context of strong/weak CMC copulae.

 As already said, we confine our study to the case of finite CMCs. One might object the choice of finite CMCs as the object of interest in this paper, as one might think that this choice is very restrictive. In Bielecki, Jakubowski and Niew\k{e}g\l owski \cite{BieJakNie2012} we studied strong Markovian dependence in the context of (nice) Feller processes, whereas in \cite{BieJakNie2013} we studied strong and weak Markovian dependence in the context of finite Markov chains. What we have learned from \cite{BieJakNie2013}  is that
 from the point of view of modeling of dependence between components of a multivariate Markov process, the finite state space set-up is actually not restrictive at all! Likewise, as it will be seen thought this paper, studying the concepts of consistency and copulae in case of finite CMCs is quite challenging and by no means restrictive.

 From the mathematical perspective, the problem of modeling of dependence between CMCs, generalizes the problem of modeling of dependence between random times. The latter problem is one of the key problems studied in the context of portfolio credit risk and counterparty risk, in case when one only considers two possible states of financial obligors: the pre-default state and the default state, with the additional caveat that the default state is absorbing, and the issue in question is the issue of modeling dependence between default times of various obligors (cf. e.g. Bielecki, Cousin, Cr\'{e}pey and Herbertsson \cite{BCCH2014}). The study done in this paper allows for tackling more general problems, such as the problem of modeling of dependence between evolutions of credit ratings of financial obligors in cases where conditioning reference information is relevant; in particular, it opens a door for generalizing  the set-up that was used in Biagini, Groll and  Widenmann \cite{BiaWid2013} to deal with an interesting problem of evaluation  of  premia  for unemployment insurance products for a pool of individuals.

The paper is organized as follows: In Section 2 we recall the basic set-up of the companion paper \cite{BieJakNie2014a}, which will be used in the rest of the paper. In Sections 3 and 4 we introduce and study the concepts of strong and weak consistency for CMCs, respectively. Section 5 is devoted to presentation of strong and weak CMC copulae, and related examples. In Section 6 we propose a possible application of the theory developed in this paper. Finally, in the Appendix, we collect several technical results that are used throughout the text.

%\textcolor[rgb]{0.00,0.00,1.00}{In Section 1 we give a sufficient
%and necessary condition for a multivariate Markov chain to be weakly
%consistent. Note that a sufficient condition for weak Markovian
%consistency can be deduced from the result of Rogers and Pitman
%\cite{RogPit1981} in which   sufficient conditions for  a function of a
%Markov process to be  a Markov process are given. Our condition for a
%weak Markovian consistency is not only more explicit, but also
%necessary. We also study the question when weak Markovian
%consistency implies strong Markovian consistency. It turns out that
%this is equivalent to $\P$-immersion between $\FF^{X^i}$ and
%$\FF^X$, given that weak Markovian consistency holds.
% In Section 2 we  study weak Markov copulae.
%%, characterizing them in terms of a system of algebraic equations.
%In Section 3  we present  three simple, but non-trivial examples,
%that illustrate \textbf{intricacies of dependence} between components of a
%multivariate Markov chain. Specifically, in Examples 3.1--3.3 we show that
%\begin{enumerate}
%\item there exist Markov processes that are strongly Markovian consistent,
%\item there exist Markov processes that are weakly Markovian consistent, but are not strongly Markovian consistent; in addition, in this case, one would expect that even if a multivariate Markov process is time-homogeneous, its components are time-inhomogeneous Markov processes; Example 3.2 illustrates this,
%\item there exist Markov processes that are neither strongly Markovian consistent nor weakly Markovian consistent.
%\end{enumerate}}
%
%\tr{wspomniec buczka}

\section{Preliminaries}

We begin with recalling the basic set-up and the basic definitions from \cite{BieJakNie2014a}.

Let $T>0$ be a fixed finite time horizon. Let $(\Omega, \mathcal{A}, \mathbb{P})$ be an underlying complete probability space, which is endowed with two filtrations, the reference filtration $\FF=(\F_t)_{t\in[ 0,T]}$ and another filtration\footnote{The reference filtration will be kept unchanged throughout the paper, whereas the meaning of filtration $\GG$ will change depending on the context.} $\GG=(\G_t)_{t\in[ 0,T]}$, that are assumed to satisfy the usual conditions.
Typically, processes considered in this paper are defined on  $(\Omega, \mathcal{A}, \mathbb{P})$, and are restricted to the time interval $[0,T]$.
{ Moreover, for any process $U$
we denote  by $\FF^U$ the completed right-continuous filtration generated by this process.}

 In addition, we fix a finite set $S$, and we denote by $d$ the cardinality of $S$. Without loss of generality we take $S=\set{1,2,3,\ldots,d}.$

An $S$-valued,  $\mathbb{G}$-adapted c\` adl\` ag  process $X$ is called  an $(\mathbb{F},\mathbb{G})$--conditional Markov chain if for every $x_1, \ldots, x_k \in S$ and  for every $0\leq t\leq t_1 \leq \ldots \leq t_k\leq T$ it satisfies the following property$\, $
\begin{equation}\label{eq:CMC-def-1}
\P( X_{t_k} =x_k, \ldots, X_{t_1} = x_1 | \F_t \vee \G_t)
=
\P( X_{t_k} =x_k, \ldots, X_{t_1} = x_1 | \F_t \vee \sigma(X_t)).
\end{equation}
As in \cite{BieJakNie2014a} we write  $(\FF, \GG)$-CMC, for short, in place of  $(\FF, \GG)$-conditional Markov chain.

Given an $(\FF,\GG)$-CMC process $X$, we define its indicator process,
\begin{equation}\label{eq:def-Hi}
H^x_t := \I_\set{ X_t = x },\quad x \in S,  \quad t\in[0,T].
\end{equation}
Accordingly, we define a column vector  $ H_t =(H^x_t,\  x\in S )^\top  $, where $\top$
denotes transposition. Similarly, for $x,y\in S,\ x\ne y,$ we define process $H^{xy}$  that counts the number of transitions of $X$ from $x$ to $y$,
\begin{equation}\label{eq:def-Hxy}
	H^{xy}_t := \# \set{ u
\leq t: X_{u-} = x\  \textrm{and}\ X_u =y } = \int_{] 0,t]} \!\!\!H^{x}_{u-} d H^{y}_u, \quad \tT.
\ee

We say that an $\FF$-adapted (matrix valued) process $\Lambda_t=[\lambda^{xy}_t]_{x,y \in S }$ satisfying
\begin{align}\label{eq:int-cond}
	\lambda^{xy}_t \geq 0, \quad \forall x, y \in S, x \neq y, \quad and \quad
	\sum_{y \in S }\lambda^{xy}_t = 0, \quad \forall x \in S,
\end{align}
is an $\FF$-intensity matrix process for $X$, if the process $M =(M^x_t,\  x\in S )^\top$ defined as
\be\label{eq:Mart-M-nowy}
     M_t = H_t - \int_0^t \Lambda_u^\top H_{u} du, \quad t\in[0,T],
\ee
is an $\FF \vee \GG\,$--$\,$local martingale (with values in $\r^d$).

Similarly as in \cite{BieJakNie2014a}, we impose in the present work the following restriction:
\begin{center}
 \framebox[1.02\width][c]{ \strut In the rest of this paper we restrict ourselves to CMCs, which admit $\FF$-intensity.}
\end{center}

Most of the analysis done in \cite{BieJakNie2014a} regards $(\FF, \GG)$--CMCs that are also $(\FF, \GG)$ doubly stochastic Markov chains. This is because doubly stochastic Markov chains enjoy some very useful analytical properties.
We will now recall the concept of  $(\mathbb{F},\mathbb{G})$-doubly stochastic Markov chain, $(\FF, \GG)$--DSMC for short, that was introduced in  Jakubowski and Niew\k{e}g{\l}owski \cite{JakNie2010}. A  $\mathbb{G}$-adapted c\` adl\` ag  process $X=(X_t)_{t \in [0,T]}$ is called  an $(\mathbb{F},\mathbb{G})$--DSMC with state space $S$  if
for any $0 \leq s \leq t \leq T $ and
    every $y \in S$ we have
 \be\lab{eq:DSMC-def}
 \mathbb{P}( X_t = y \mid \mathcal{F}_T \vee \mathcal{G}_s )
 =    \mathbb{P}( X_t = y \mid \mathcal{F}_t \vee \sigma(X_s) ).
 \ee
 We recall from \cite{BieJakNie2014a} that  with any $X$, which is an $(\FF,\GG)$-DSMC,  we  associate a matrix valued random field $ P=( P(s,t),\  0 \leq s \leq t \leq T)$, called the conditional transition
probability matrix field (c--transition field for short), where $ P(s,t)= ( p_{xy}(s,t))_{x,y \in
S }$   is defined by
\be\label{tp}   p_{x,y}(s,t) =
    \frac{ \mathbb{P}( X_t = y , X_s = x \mid \mathcal{F}_t  ) }{  \mathbb{P}(  X_s = x  | \F_t ) }
\1_\set{\mathbb{P}(  X_s = x  | \F_t ) > 0}
+
\I_\set{ x=y} \1_\set{\mathbb{P}(  X_s = x  | \F_t ) = 0}
.
\ee
By \cite[Proposition 4.2]{BieJakNie2014a} we know that for any $0 \leq s \leq t \leq T $ and  for
    every $y \in S$ we have
    \begin{equation}\label{eq:DSMC-def-3}
    \mathbb{P}( X_t = y \mid \mathcal{F}_T \vee \mathcal{G}_s )
     = \sum_{x \in S}\I_\set{ X_s  = x }
    p_{xy}(s,t).
    \end{equation}
Moreover we recall that
% \begin{definition}\label{defdef}
%We say that
$\mathbb{F}$--adapted matrix-valued
process $\Gamma = (\Gamma_s)_{s \geq 0} = ([\gamma^{xy}_s]_{x,y \in S})_{s
\geq 0}$ is an intensity of an   $(\mathbb{F},\mathbb{G})$-DSMC $X$
if:
%\footnote{The symbol $"\mathrm{I}``$  used below is a generic symbol for the identity matrix, whose dimension may vary depending on the context.} \\

\noindent 1)
\begin{equation}  \label{nr2/8}
\int_{]0,T]} \sum_{x \in S} \abs{\gamma^{xx}_s } ds <
\infty  .\ \ \ \ \ \
\end{equation}
 2)
  \begin{equation}\label{eq:intensity-cond}
   \gamma^{xy}_s \geq 0 \ \ \
    \forall x, y\in S, x\neq y
    ,
    \ \ \ \
    \gamma^{xx}_s = - \sum_{y \in S: y \neq x} \gamma^{xy}_s
    \ \ \ \
    \forall x \in  S.
\end{equation}
3)
  The Kolmogorov backward equation holds: for all  $v\leq t$,
\begin{equation}\label{eq:INT-trans-prob-backward}
 P(v,t) - \mathrm{I}     =   \int_v^t  \Gamma_u P(u,t) du.
\end{equation}
4)  The Kolmogorov forward equation holds: for all  $v\leq t$,
\begin{equation}\label{eq:INT-trans-prob-forward}
 P(v,t)  - \mathrm{I} = \int_v^t P(v,u) \Gamma_u du.
\end{equation}
%\end{definition}

We refer to \cite{BieJakNie2014a} for discussion of the notion of intensity process of an  $(\FF, \GG)$--DSMC, as well as for a discussion of the relationship between the concept of the $(\FF, \GG)$--CMC and the concept of $(\FF, \GG)$--DSMC. In particular, it is shown in \cite{BieJakNie2014a} that one can construct an  $(\FF, \GG)$--CMC, which is also an $(\FF, \GG)$--DSMC. In addition, sufficient conditions under which an $(\FF, \GG)$--DSMC is an $(\FF, \GG)$--CMC are given in \cite{BieJakNie2014a}.

In what follows, we will use the acronym $(\FF, \GG)$--CDMC for any process that is both an $(\FF, \GG)$--CMC and an $(\FF, \GG)$--DSMC
.

\section{Strong Markovian Consistency of Conditional Markov Chains}
\label{sec:SMC}

Let $X=(X^1,\ldots,X^N)$ be a multivariate ${(\FF,\FF^X)}$-CMC\,\footnote{{Definitions of strong and weak Markov consistency can be naturally extended to the case of process $X=(X^1,\ldots,X^N)$, which is a multivariate ${(\FF,\GG)}$-CMC, with $\FF^X\subseteq \GG$. In the present paper we shall only work with $X=(X^1,\ldots,X^N)$ being  a multivariate ${(\FF,\FF^X)}$-CMC.}} with values in $S:=\text{\sf X}_{k=1}^N\, S_k$, where $S_k$ is a finite set, $k=1,\ldots,N.$ We will denote by  $\Lambda$ the $\FF$-intensity of $X$.

\begin{definition}
(i) Let us fix $k\in\set{1,\ldots,N}$.  We say that  process $X$ satisfies the
 strong Markovian consistency property with respect to $(X^k, \mathbb{F})$ if  for every $x^k_1,  \ldots, x^k_m\in S_k $
and for all $0 \leq t \leq t_1 \leq \ldots \leq t_m \leq T,$ it holds that
\begin{equation}\label{eq:markov-cons-strong}
\mathbb{P}\left( X^k_{t_m } =x^k_{m},  \ldots, X^k_{t_1 } =x^k_{1}| \mathcal{F}_t \vee  \mathcal{F}^X_t \right) =   \mathbb{P} \left( X^k_{t_m } =x^k_{m},  \ldots, X^k_{t_1 } =x^k_{1}  | \mathcal{F}_t \vee \sigma(X^k_t)    \right),
\end{equation}
or, equivalently, if  $X^k$ is an $(\mathbb{F},  \FF^X)$-CMC.\footnote{In more generality, one might define strong Markovian consistency with respect to a collection $X^I:=\{X^k,k\in I\subset \{1,2,\ldots\}\}$ of components of $X$. This will not be done in this paper though.} \\
(ii) If   $X$ satisfies the
 strong Markovian consistency property with respect to $(X^k, \mathbb{F})$ for all $k\in\set{1,\ldots,N}$, then we say that  $X$ satisfies the
 strong Markovian consistency property with respect to $\mathbb{F}$.
\end{definition}
\begin{remark}
There is a relation between strong Markovian consistency of $X$ with respect to $(X^k, \mathbb{F})$ and the concept of Clive Granger's causality (cf. Granger \cite{Granger}):  Suppose that process $X$ satisfies the
 strong Markovian consistency property with respect to $(X^k, \mathbb{F})$. If the reference filtration $\mathbb{F}$ is trivial, then the collection $\{X^i,\ i\ne k\}$ does not Granger cause $X^k$. By extenso, we may say that, in the case when reference filtration $\mathbb{F}$ is not trivial, then, ''conditionally on $\mathbb{F}$``, the collection $\{X^i,\ i\ne k\}$ does not Granger cause $X^k$.
\end{remark}

The next definition extends the previous one by requiring that the laws of the marginal processes $X^k,\ k=1,\dots , N,$ are predetermined. This definition will be a gateway to the concept of strong CMC copulae in Section \ref{SCMCC}.
 \begin{definition}
Let $\cY=\set{Y^1, \ldots, Y^N}$ be a family of processes such that each $Y^k $ is an $(\mathbb{F},\mathbb{F}^{Y^k})$-CMC with values in $S_k$.\\
(i) Let us fix $k\in \set{1,2,\ldots,N}$ and let process $X$ satisfy the strong Markovian consistency property with respect to  $(X^k, \mathbb{F})$.  If the conditional law of $X^k$ given $\mathcal{F}_T$ coincides with the conditional law of $Y^k$ given $\mathcal{F}_T$, then we say that  process $X$ satisfies the strong Markovian consistency property with respect to  $(X^k,\mathbb{F}, Y^k)$.\\
(ii)  If $X$ satisfies the strong Markovian consistency property with respect to  $(X^k,\mathbb{F}, Y^k)$  for every $k\in \set{1,2,\ldots,N}$, then we say that $X$ satisfies the strong Markovian consistency property with respect to  $(\mathbb{F},\cY)$.
\end{definition}

\subsection{Sufficient and Necessary Conditions for Strong Markovian Consistency}\label{sekcjazrameczka}

In view of the methodology developed in \cite{BieJakNie2014a}, specifically, in view of \cite[Proposition 4.17]{BieJakNie2014a} %\ref{thm:CMC-DCMC}
 therein, we will frequently make use in what follows of the following assumption

\begin{center}
\framebox[1.02\width][c]{ \strut
{\sf Assumption (A):}
\begin{tabular}{l}
(i)
 $X$ is an $(\FF,\FF^X)$--DSMC admitting an
intensity. \\
(ii) $\mathbb{P}( X_{0} = x_0 | \F_T) =\mathbb{P}( X_{0} = x_0 | \F_0)$
 for every $x_0 \in S$. \\
\end{tabular}
}
\end{center}

%\begin{center}
%{\framebox[1.02\width][c]{
%{{\sf Assumption (A):} \begin{itemize}
%                         \item[(i)] $X$ is an $(\FF,\FF^X)$--DSMC {admitting an
%intensity.}
%                         \item[(ii)] $\mathbb{P}( X_{0} = x_0 | \F_T) =\mathbb{P}( X_{0} = x_0 | \F_0) \quad
%\text{for every $x_0 \in S$,}$}
%                       \end{itemize}}
%}
%\end{center}

Let us note that in view of \cite[Theorem 4.15]{BieJakNie2014a}, under Assumption (A)(i), the intensity of $X$ considered as  an $(\FF,\FF^X)$--DSMC coincides, in the sense of \cite[Definition 2.5]{BieJakNie2014a}, with the $\FF$-intensity $\Lambda$ of $X$ considered as  an $(\FF,\FF^X)$--CMC. Consequently, we will now say that $X$ is an $(\FF,\FF^X)$--CDMC with intensity $\Lambda.$

%\mn{Jezeli nie udowodnimy punktu ii) Propozycji 4.17 w \cite{BieJakNie2014a} to musi zostac intenisty a nie $\FF$-intensity. Wtedy trzeba by tez zdefiniowac co to jest intensity dla DSMC }
%\vskip 10 pt

Our next goal is to provide condition characterizing strong Markovian consistency of process $X$. Towards this end we first introduce the following condition\footnote{The acronym SM comes from  Strong Markov.}
\vskip 10pt
\noindent
{\sf Condition (SM-k):$\ $} There exist
 $\FF$-{adapted} processes $\lambda^{k;x^ky^k}$, $x^k,y^k \in S_k$, $x^k\neq y^k$, such that
\be\label{weak-Mk}
\begin{aligned}
&\I_\set{X^k_t = x^k} \sum_{\substack{y^n\in S_n,\\ n=1,2,\ldots,N, n \neq k}}\lambda^{(X^1_t,\ldots,X^{k-1}_t,x^k,X^{k+1}_t,\ldots,X^N_t)(y^1,\ldots,y^k,\ldots,y^N)}_t \\
&=
\I_\set{X^k_t = x^k} \lambda^{k;x^k y^k }_t,
\quad dt \otimes d \P\mhyphen a.e. \quad  \forall x^k, y^k \in S_k, x^k \neq y^k.
\end{aligned}
\ee

We have following proposition, which is a direct consequence of \cite[Proposition 2.6]{BieJakNie2014a}, and thus we omit its proof.
\begin{proposition}
Let $X$ satisfy  Assumption (A) and let $\Lambda$, $\wh{\Lambda}$ be $\FF$-intensities of $X$. Then (SM-k)  holds for $\Lambda$ if and only if it holds for $\wh{\Lambda}$.
\end{proposition}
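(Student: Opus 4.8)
The statement to prove asserts that Condition (SM-k) is invariant under the choice of $\FF$-intensity: if $\Lambda$ and $\wh\Lambda$ are both $\FF$-intensities of $X$, then (SM-k) holds for one precisely when it holds for the other. The claim is explicitly said to be a direct consequence of \cite[Proposition 2.6]{BieJakNie2014a}, so the plan is to reduce the condition to the raw quantities controlled by that cited result rather than to attempt anything from first principles.

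The plan is as follows. First I would recall what \cite[Proposition 2.6]{BieJakNie2014a} provides: since $\Lambda$ and $\wh\Lambda$ are both $\FF$-intensities of the same process $X$, the cited proposition should guarantee that they agree along the trajectory of $X$ in the appropriate almost-everywhere sense, i.e.\ that
\begin{equation*}
\I_\set{X_{t-} = x}\,\lambda^{xy}_t = \I_\set{X_{t-} = x}\,\wh\lambda^{xy}_t, \qquad dt \otimes d\P\text{-a.e.},
\end{equation*}
for all $x,y \in S$ (up to the usual null sets on which intensities need not be pinned down). This is exactly the uniqueness-up-to-the-path statement that two intensity matrices of one martingale problem can differ only off the support of the occupation measure of $X$.

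Second, I would observe that the left-hand side of \eqref{weak-Mk} is built entirely from the entries $\lambda^{xy}_t$ evaluated at states $x = (X^1_t,\ldots,x^k,\ldots,X^N_t)$ that coincide with the current position of $X$ in all coordinates except the $k$-th, multiplied by the indicator $\I_\set{X^k_t = x^k}$. On the event $\set{X^k_t = x^k}$ this state $x$ is precisely $X_t$ (equivalently $X_{t-}$, $dt$-a.e., since $X$ has at most countably many jumps), so the indicator-weighted sum appearing in (SM-k) is a sum of terms each of the form $\I_\set{X_{t-}=x}\lambda^{xy}_t$. By the agreement from the first step, each such term is unchanged when $\lambda$ is replaced by $\wh\lambda$, $dt\otimes d\P$-a.e. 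Hence the entire left-hand side of \eqref{weak-Mk} is the same for $\Lambda$ and $\wh\Lambda$ almost everywhere. Since the right-hand side $\I_\set{X^k_t=x^k}\lambda^{k;x^ky^k}_t$ involves only the auxiliary $\FF$-adapted processes $\lambda^{k;x^ky^k}$, which are the same objects whose existence (SM-k) postulates and which do not depend on the choice between $\Lambda$ and $\wh\Lambda$, the validity of the identity for $\Lambda$ is equivalent to its validity for $\wh\Lambda$.

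The only delicate point, which is where I expect the main obstacle to lie, is the careful bookkeeping of the null sets. One must ensure that the $dt\otimes d\P$-null set on which $\lambda$ and $\wh\lambda$ may disagree does not interfere with the $dt\otimes d\P$-a.e.\ quantifier in (SM-k), and that the finite summation over the product states $\prod_{n\neq k} S_n$ preserves the almost-everywhere equality (a finite union of null sets is null, so this is routine but should be stated). Because this matching is purely a statement about the occupation measure of $X$ and does not reference the CMC or DSMC structure beyond what Assumption (A) already grants, invoking \cite[Proposition 2.6]{BieJakNie2014a} dispatches it immediately, which is exactly why the proof is omitted.
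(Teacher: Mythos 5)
Your proposal is correct and takes essentially the same route the paper intends: the paper omits the proof precisely because it is a direct consequence of \cite[Proposition 2.6]{BieJakNie2014a}, which (as you recall) forces any two $\FF$-intensities of the same chain to agree when weighted by the occupation indicators $\I_\set{X_t=x}$, and on the event $\set{X^k_t=x^k}$ every state appearing on the left-hand side of (SM-k) is exactly $X_t$, so each indicator-weighted term is unchanged under the swap $\Lambda \leftrightarrow \wh{\Lambda}$. The null-set bookkeeping you flag is indeed routine here, since $S$ is finite and a finite union of $dt\otimes d\P$-null sets is null.
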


\noindent The next theorem provides sufficient and necessary conditions for strong Markovian consistency property of process $X$  with respect to $\mathbb{F}$.
\bt\label{thm:weak-Mk} Let $X$ satisfy Assumption (A), and let us fix $k\in \set{1,2,\ldots,N}$. Then, $X$ is strongly Markovian consistent with respect to $(X^k, \FF) $ if and only if Condition (SM-k) is satisfied.  Moreover, in this case, process $X^k$ admits the $\FF$-intensity
$\Lambda^k=[\lambda^{k;{x^ky^k}}]_{{x^k,y^k} \in S_k},$ with $\lambda^{k;{x^ky^k}}$ as in  Condition (SM-k), and with $\lambda^{k;{x^kx^k}}$ given by
\[
\lambda^{k;{x^kx^k}}_t= -\sum_{y^k \in S_k, y^k \neq x^k}\lambda^{k;{x^ky^k}}_t.
\]
\et

\begin{proof} Let $\Lambda$ be an $\FF$ intensity of $X$. For simplicity of notation, but without loss of generality, we give the proof for  $k=1$, and for $N=2$,  so that  $S=S_1 \times S_2$,  $X =(X^1, X^2)$. In this case, \eqref{weak-Mk} takes the form
\begin{align}\label{weak-M1}
\I_\set{X^1_t = x^1} \sum_{ y^2 \in S_2} \lambda^{(x^1,X^2_t)(y^1,y^2)}_t
=
\I_\set{X^1_t = x^1} \lambda^{1;x^1 y^1}_t,
\quad dt \otimes d \P\mhyphen a.e., \quad  \forall x^1, y^1 \in S_1, x^1 \neq y^1.
\end{align}
\noindent \underline{Step 1:}

For $x^1, y^1\in S_1,$ $x^1 \neq y^1$ and for $x^2\in S_2$ we define processes $H^{1;x^1y^1}, H^{1; x^1}, H^{2;x^2}$ by
\[
	H^{1;x^1y^1}_t :=
\sum_{0 < u \leq t } \I_\set{X^1_{u-}= x^1, X^1_{u}= y^1}=\sum_{x^2, y^2 \in S_2 } H^{(x^1,x^2)(y^1,y^2)}_t, \quad t \in [0,T],
\]
and
\[
	H^{1; x^1}_t := \I_\set{ X^1_t = x^1}
,\quad 	H^{2; x^2}_t := \I_\set{ X^2_t = x^2}.
\]

Next, we consider process $K^{(x^1,x^2) (y^1,y^2)}$, given as
\begin{align*}
K^{(x^1,x^2) (y^1,y^2)}_t&=H^{(x^1,x^2)(y^1,y^2)}_t-\int_0^t\, H^{(x^1,x^2)}_u\lambda^{(x^1,x^2)(y^1,y^2)}_u\, du\\
&=H^{(x^1,x^2)(y^1,y^2)}_t-\int_0^t\, H^{1; x^1}_uH^{2; x^2}_u\lambda^{(x^1,x^2)(y^1,y^2)}_u\, du,\quad t \in [0,T].  \end{align*}
 In view of \cite[Theorem 2.8]{BieJakNie2014a} %\ref{thm:int-comp}
 process $K^{(x^1,x^2)(y^1,y^2)}$ is an $\FF \vee \FF^X\,$--$\,$local martingale. Since, in view of Assumption (A), $X$ is also an $(\FF, \FF^X)$-DSMC, then, Theorem 4.10 in \cite{BieJakNie2014a} implies  that $K^{(x^1,x^2)(y^1,y^2)}$ is also $\widehat{\FF}^{X}\,$--$\,$local martingale, where $\widehat{\FF}^{X}:= (\F_T \vee \F^X_t)_{t \in [0,T] }$. Consequently,
process $K^{x^1y^1}$ given as
\begin{align}\label{eq:M-X1}
	K^{x^1y^1}_t =\sum_{x^2, y^2 \in S_2 }K^{(x^1,x^2)(y^1,y^2)}_t= H^{1;x^1y^1}_t - \int_0^t \sum_{x^2, y^2 \in S_2 } H^{1; x^1}_uH^{2; x^2}_u \lambda^{(x^1,x^2)(y^1,y^2)}_u du
\end{align}
is an $\FF \vee \FF^X\,$--$\,$local martingale as well as an $\widehat{\FF}^{X}\,$--$\,$local martingale.

\noindent \underline{Step 2:}

Now, assume that  \eqref{weak-M1} holds. Then, using \eqref{eq:M-X1} we obtain that
\begin{align*}
	K^{x^1y^1}_t &= H^{1;x^1y^1}_t - \int_0^t  H^{1;x^1}_u \sum_{x^2 \in S_2} \left[ H^{2;x^2}_u \left( \sum_{y^2 \in S_2}  \lambda^{(x^1,x^2)(y^1,y^2)}_u \right) \right]du \\
	&= H^{1;x^1y^1}_t - \int_0^t  H^{1;x^1}_u \left[ \sum_{y^2 \in S_2}  \lambda^{(x^1,X^2_u)(y^1,y^2)}_u \right]du \\
 &= H^{1;x^1y^1}_t - \int_0^t  H^{1;x^1}_u  \lambda^{1;x^1y^1}_u du.
\end{align*}
Thus, since  $K^{x^1y^1}$ is a $\widehat{\FF}^{X}\,$--$\,$local martingale, then, by \cite[Theorem 4.10]{BieJakNie2014a}%\ref{thm:characterization-DSMC}
, the process $X^1$ is $(\FF, \FF^X)$-DSMC with intensity process $\Lambda^1$.
$X$ is an $(\FF, \FF^X)$-DSMC, so the filtration $\FF$ is immersed in $\FF \vee \FF^X$ (see \cite[Corollary 4.7]{BieJakNie2014a}).  %\ref{cor:hihihaha}
Consequently, applying \cite[Proposition 4.13]{BieJakNie2014a} %\ref{prop:DSMCisCMC}
we conclude that $X^1$ is $(\FF, \FF^X)$-CMC.

\noindent \underline{Step 3:}

Conversely, assume that $X^1$ is an $(\FF,\FF^X)$-CMC with $\FF$-intensity $\Lambda^1$. So, process  $\widehat{K}^{x^1y^1}$ given as
\[
\widehat{K}^{x^1y^1}_t = H^{1;x^1y^1}_t - \int_0^t  H^{1;x^1}_u  \lambda^{1;x^1y^1}_udu,\quad t \in [0,T],
\]
is an $\FF\vee \FF^X\,$--$\,$local martingale. Recall that process  $K^{x^1y^1}$ defined in \eqref{eq:M-X1} is an $\FF \vee \FF^X$--local martingale.
Consequently, the difference $\widehat{K}^{x^1y^1} - K^{x^1y^1}$, which equals
\begin{align*}
\widehat{K}^{x^1y^1}_t - K^{x^1y^1}_t &=
\int_0^t H^{1;x^1}_u \left( \sum_{x^2, y^2 \in S_2 } H^{2;x^2}_u \lambda^{(x^1,x^2)(y^1,y^2)}_u - \lambda^{1;x^1y^1}_u \right) du \\
&=
\int_0^t  H^{1;x^1}_u \left[ \sum_{y^2 \in S_2}  \lambda^{(x^1 X^2_u)(y^1 y^2)}_u - \lambda^{1;x^1y^1}_u \right]du, \quad \tT,
 \end{align*}
  is a continuous $\FF\vee \FF^X\,$--$\,$local martingale of finite variation, and therefore it is equal to the  null process. This implies \eqref{weak-M1}. The proof of the theorem is complete.
\finproof
\end{proof}
%
%\tr{trzeba dopisac, o ile to prawda, ze z dowodu powyzszego twierdzenia wynika, ze wspolrzedna $X^k$ jest $(\mathbb{F},\mathbb{F}^{X})$-CMC, a zatem jest ona $(\mathbb{F},\mathbb{F}^{X^k})$-CMC }
%
%
The next theorem  gives sufficient and necessary conditions for strong Markovian consistency property of $X$   with respect to  $(\mathbb{F},\cY)$. This theorem will be used to prove Proposition \ref{cor:56}, which will be critically important in the study of strong CMC copulae in Section \ref{SCMCC}.
\begin{theorem}\label{cor:54}
%Let $Y=Y^1, \ldots, Y^N$ be processes such that each $Y^k $ is an $(\mathbb{F},\mathbb{F}^{Y^k})$-CMC with values in $S_k$.
Let $\cY=\set{Y^1, \ldots, Y^N}$ be a family of processes such that each $Y^k $ is an $(\mathbb{F},\mathbb{F}^{Y^k})$-CDMC,  with values in $S_k$, and with $\FF$-intensity $\Psi^k_t=[\psi^{k;x^ky^k}_t]_{x^k,y^k \in S_k}$.
Let process $X$ satisfy Assumption (A). Then, $X$ satisfies the strong Markovian consistency property with respect to  $(\mathbb{F},\cY)$ if and only if for all $k=1,2,\ldots,N$, the following hold:
\begin{itemize}
\item[(i)] Condition %\eqref{weak-Mk}
(SM-k) is satisfied with
\[
\lambda^{k;x^ky^k} = \psi^{k;x^ky^k}, \quad
x^k,y^k \in S_k, x^k\neq y^k.
\]
\item[(ii)] The law of $X_0^k$ given $\mathcal{F}_T$ coincides with the law of $Y^k_0$ given $\mathcal{F}_T$. %    for all $k=1,2,\ldots,N$.
\end{itemize}
\end{theorem}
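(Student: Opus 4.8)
The plan is to reduce the assertion to a componentwise comparison and then to exploit the fact that, for a doubly stochastic Markov chain, the conditional law given $\F_T$ is completely encoded by its initial conditional law given $\F_T$ together with its $\FF$-intensity. By the Definition preceding the theorem, $X$ is strongly Markovian consistent with respect to $(\FF,\cY)$ precisely when, for every $k$, it is strongly Markovian consistent with respect to $(X^k,\FF)$ \emph{and} the conditional law of $X^k$ given $\F_T$ coincides with that of $Y^k$ given $\F_T$. Theorem~\ref{thm:weak-Mk} already disposes of the first half: under Assumption (A), strong Markovian consistency with respect to $(X^k,\FF)$ is equivalent to Condition (SM-k), and in that case $X^k$ is an $(\FF,\FF^X)$--CDMC whose $\FF$-intensity is the matrix $\Lambda^k=[\lambda^{k;x^ky^k}]$ assembled from the (SM-k) processes. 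Hence the whole theorem will follow once I establish the key dictionary: for two CDMCs with reference filtration $\FF$ and state space $S_k$, equality of their conditional laws given $\F_T$ is equivalent to equality of their $\FF$-intensities (up to $dt\otimes d\P$-null sets) together with equality of their initial conditional laws given $\F_T$.

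To prove this dictionary I would iterate \eqref{eq:DSMC-def-3}. Let $Z$ be a CDMC on $S_k$ with c--transition field $P$ and $\FF$-intensity $\Gamma$. For $0\le t_1\le\cdots\le t_m$, conditioning successively and using \eqref{eq:DSMC-def-3} together with the tower property and the $\F_T$-measurability of the entries of $P(s,t)$ (each $p_{xy}(s,t)$ being $\F_t$-measurable), one obtains the product representation
\[
\P\big(Z_{t_1}=z_1,\ldots,Z_{t_m}=z_m\mid\F_T\big)=\sum_{z_0\in S_k}\P(Z_0=z_0\mid\F_T)\,p_{z_0 z_1}(0,t_1)\prod_{j=1}^{m-1}p_{z_j z_{j+1}}(t_j,t_{j+1}).
\]
Thus the conditional law of $Z$ given $\F_T$ is determined by the pair (initial conditional law, c--transition field $P$). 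In turn, $P$ is the unique solution of the Kolmogorov equations \eqref{eq:INT-trans-prob-backward}--\eqref{eq:INT-trans-prob-forward} driven by $\Gamma$, so $P$ is determined by $\Gamma$; conversely $\Gamma$ is recovered $dt\otimes d\P$-a.e.\ from $P$ through the same equations. This yields exactly the stated equivalence between conditional laws given $\F_T$ on the one hand, and the pair (intensity, initial conditional law) on the other.

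With the dictionary in hand both directions are bookkeeping. For necessity, assume $X$ is strongly Markovian consistent with respect to $(\FF,\cY)$ and fix $k$. Strong consistency with respect to $(X^k,\FF)$ holds, so Theorem~\ref{thm:weak-Mk} gives Condition (SM-k) and makes $X^k$ an $(\FF,\FF^X)$--CDMC with $\FF$-intensity $\Lambda^k$; moreover the conditional law of $X^k$ given $\F_T$ equals that of $Y^k$. Since $Y^k$ is an $(\FF,\FF^{Y^k})$--CDMC with $\FF$-intensity $\Psi^k$, the dictionary forces $\Lambda^k=\Psi^k$ a.e.\ (so (SM-k) holds with $\lambda^{k;x^ky^k}=\psi^{k;x^ky^k}$, which is (i)) and $\P(X^k_0=\cdot\mid\F_T)=\P(Y^k_0=\cdot\mid\F_T)$, which is (ii). For sufficiency, assume (i) and (ii) for all $k$. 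By (i), Condition (SM-k) holds, so Theorem~\ref{thm:weak-Mk} yields strong consistency with respect to $(X^k,\FF)$ and makes $X^k$ an $(\FF,\FF^X)$--CDMC with $\FF$-intensity $\Lambda^k=\Psi^k$; combined with (ii) the dictionary gives equality of the conditional laws of $X^k$ and $Y^k$ given $\F_T$. Hence $X$ is strongly consistent with respect to $(X^k,\FF,Y^k)$ for each $k$, i.e.\ with respect to $(\FF,\cY)$.

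I expect the main obstacle to be the dictionary of the second paragraph rather than the logical assembly: namely, upgrading the one-step identity \eqref{eq:DSMC-def-3} to the finite-dimensional product formula on a set of full probability, and justifying the bijective correspondence between the c--transition field and the $\FF$-intensity. The former requires care with the convention \eqref{tp} on the event $\{\P(Z_s=x\mid\F_t)=0\}$, while the latter rests on uniqueness for the Kolmogorov equations \eqref{eq:INT-trans-prob-backward}--\eqref{eq:INT-trans-prob-forward} (a Grönwall-type argument, available from \cite{BieJakNie2014a}) and on the fact that intensities are only pinned down $dt\otimes d\P$-a.e.
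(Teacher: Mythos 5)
Your proposal is correct and is essentially the paper's own argument: the paper likewise reduces the theorem componentwise via Theorem~\ref{thm:weak-Mk} and then invokes exactly your ``dictionary'' as its Lemma~\ref{lem:paskudny} (Appendix A), which states that two CDMCs have the same $\F_T$-conditional law if and only if their $\FF$-intensities agree $du\otimes d\P$-a.e.\ and their initial $\F_T$-conditional laws coincide. Even your sketch of the dictionary mirrors the paper's proof of that lemma --- the product formula for conditional finite-dimensional distributions obtained from \eqref{eq:DSMC-def-3} (cited there as Proposition~4.6 of the companion paper) plus uniqueness for the Kolmogorov equations \eqref{eq:INT-trans-prob-backward}--\eqref{eq:INT-trans-prob-forward}; the only detail you leave loose, recovering $\Gamma$ from the c--transition field, is handled in the paper by the invertibility of $P(v,u)$ (via \cite[Proposition 3.11]{JakNie2010}) applied to $0=\int_v^t P^Z(v,u)\left(\Gamma^Z_u-\Gamma^U_u\right)du$.
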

\proof
First we prove sufficiency.  In view of (i) we conclude from Theorem \ref{thm:weak-Mk} that process $X$ is strongly Markovian consistent with respect to $(X^k, \FF) $, and that $X^k$ admits the $\FF$-intensity $\Psi^k$, for each $k=1,2,\ldots,N$.
This, combined with (ii) implies, in view of Lemma \ref{lem:paskudny}, that $X$ satisfies the strong Markovian consistency property with respect to  $(\mathbb{F},\cY)$.\\
Now we prove necessity.
Since $X$ satisfies the strong Markovian consistency property with respect to  $(\mathbb{F},\cY)$, then, clearly, the law of $X_0^k$ given $\mathcal{F}_T$ coincides with the law of $Y^k_0$ given $\mathcal{F}_T$ for all $k=1,2,\ldots,N$. In addition, in view of Theorem \ref{thm:weak-Mk} and Lemma \ref{lem:paskudny}, we conclude that \eqref{weak-Mk} is satisfied with
$\Lambda^k=\Psi^k$, for all $k=1,2,\ldots,N$.
\finproof
\endproof

\subsection{Algebraic Conditions for Strong Markov Consistency}
The necessary and sufficient condition for strong Markov consistency stated in  Theorem \ref{thm:weak-Mk} may not be easily verified. Here, we provide an algebraic sufficient condition for strong Markov consistency, which typically is easily verified.  Towards this end let us fix $k\in \set{1,2,\ldots,N}$, and let us consider the following condition\footnote{The acronym ASM comes from Algebraic Strong Markov.}

%
%\smallskip
%\centerline{\framebox[1.02\width][c]{
%{\sf Condition (ASM-k):$\ $}
%{\it The $\mathbb{F}$-intensity process $\Lambda$ satisfies, for every $t \in [0,T],$ and for all $\bar{x}^n,x^n \in S_n,\, n\ne
%k,\ x^k,\ y^k \in S_k,\, x^k\ne y^k $,
%\begin{align}\label{Mn}
%\sum_{\substack{y^n\in S_n,\\ n=1,2,\ldots,N, n \neq k}}\lambda^{(x^1,\ldots,x^k,\ldots,x^N)(y^1,\ldots,y^k,\ldots,y^N)}_t
%&=
%\sum_{\substack{y^n\in S_n,\\ n=1,2,\ldots,N, n \neq k}}\lambda^{(\bar x^1,\ldots,\bar x^{k\!-\!1},x^k,\bar x^{k\!+\!1},\ldots,\bar x^N)(y^1,\ldots,y^k,\ldots,y^N)}_t
%. \nonumber
%\end{align}
%}
%}
%}
\vskip 10pt
\noindent
{\sf Condition (ASM-k):$\ $} {\it The $\mathbb{F}$-intensity process $\Lambda$ of $X$ satisfies, for every $t \in [0,T],$ and for all $x^k,\ y^k \in S_k,\, x^k\ne y^k $, and
$\bar{x}^n,x^n \in S_n,\, n\ne k,$
\begin{align}\label{Mn}
\sum_{\substack{y^n\in S_n,\\ n=1,2,\ldots,N, n \neq k}}\lambda^{(x^1,\ldots,x^k,\ldots,x^N)(y^1,\ldots,y^k,\ldots,y^N)}_t
&=
\sum_{\substack{y^n\in S_n,\\ n=1,2,\ldots,N, n \neq k}}\lambda^{(\bar x^1,\ldots,\bar x^{k\!-\!1},x^k,\bar x^{k\!+\!1},\ldots,\bar x^N)(y^1,\ldots,y^k,\ldots,y^N)}_t
. \nonumber
\end{align}
}

\begin{remark}
Contrary to Condition (SM-k),  whether condition (ASM-k) holds or not depends on the choice of version of $\FF$ intensity (see Example \ref{ex:aleuszy}).
\end{remark}

We note that Condition (ASM-k) generalizes the analogous condition introduced in Bielecki, Jakubowski, Vidozzi and Vidozzi \cite{BieJakVidVid2008} for Markov chains, and called there Condition (M). The importance of Condition (ASM-k) stems from the next result.
\begin{proposition}\label{suffstrong}
Let process $X$ satisfy Assumption (A), and let us fix $k\in \set{1,2,\ldots,N}$. Then, Condition (ASM-k) is sufficient  for strong Markovian consistency of $X$ relative to $(X^k, \FF)$ and for $\Lambda^k=[\lambda^{k;x^ky^k}]_{x^k,y^k\in S_k}$ to be an $\FF$-intensity process of  $X^k$, where $\lambda^{k;x^ky^k}$ is given as
\begin{equation}\label{lambdeczka}
\lambda^{k;x^ky^k}=\sum_{\substack{y^n\in S_n,\\ n=1,2,\ldots,N, n \neq k}}\lambda^{(x^1,\ldots,x^k,\ldots,x^N)(y^1,\ldots,y^k,\ldots,y^N)}
\end{equation} for $x^k\ne y^k$, and
\[
\lambda^{k;{x^kx^k}}= -\sum_{y^k \in S_k, y^k \neq x^k}\lambda^{k;{x^ky^k}}.
\]
\end{proposition}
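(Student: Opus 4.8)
The plan is to show that Condition (ASM-k) forces Condition (SM-k) to hold, and then to invoke Theorem \ref{thm:weak-Mk}, which already supplies both the strong Markovian consistency and the explicit form of the $\FF$-intensity of $X^k$. As in the proof of Theorem \ref{thm:weak-Mk}, one may take $k=1$ and $N=2$ for notational ease, the general case being identical.

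First I would read off the real content of (ASM-k): for each fixed pair $x^k,y^k\in S_k$ with $x^k\ne y^k$ and each $t$, the quantity
\[
\sum_{\substack{y^n\in S_n,\\ n=1,\ldots,N,\, n\neq k}}\lambda^{(x^1,\ldots,x^k,\ldots,x^N)(y^1,\ldots,y^k,\ldots,y^N)}_t
\]
does not depend on the coordinates $x^n$, $n\ne k$. Consequently the process $\lambda^{k;x^ky^k}$ defined by \eqref{lambdeczka} is \emph{well defined}, i.e.\ independent of the arbitrary reference states $x^n$, $n\ne k$, appearing on its right-hand side; it is $\FF$-adapted because $\Lambda$ is; and it is nonnegative for $x^k\ne y^k$, since each summand is a genuine off-diagonal entry of $\Lambda$ (the source $(x^1,\ldots,x^N)$ and the target $(y^1,\ldots,y^k,\ldots,y^N)$ differ at least in the $k$-th coordinate, $x^k\ne y^k$), hence nonnegative by \eqref{eq:int-cond}.

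Next I would evaluate the left-hand side of \eqref{weak-Mk} on the set $\set{X^k_t=x^k}$ by substituting the actual current states $x^n=X^n_t$, $n\ne k$. By the coordinate-independence just established, this sum equals $\lambda^{k;x^ky^k}_t$ pathwise, so multiplying by $\I_\set{X^k_t=x^k}$ yields \eqref{weak-Mk} identically, in particular $dt\otimes d\P$-a.e. Thus Condition (SM-k) holds with exactly this choice of $\lambda^{k;x^ky^k}$. Theorem \ref{thm:weak-Mk} then applies verbatim: $X$ is strongly Markovian consistent with respect to $(X^k,\FF)$, and $X^k$ admits the $\FF$-intensity $\Lambda^k=[\lambda^{k;x^ky^k}]_{x^k,y^k\in S_k}$ with off-diagonal entries \eqref{lambdeczka} and diagonal entries $\lambda^{k;x^kx^k}=-\sum_{y^k\ne x^k}\lambda^{k;x^ky^k}$. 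I do not anticipate a genuine obstacle: the only point that needs care is the well-definedness and $\FF$-measurability of the candidate intensity $\lambda^{k;x^ky^k}$, which is precisely what (ASM-k) buys, after which the assertion is an immediate corollary of Theorem \ref{thm:weak-Mk}.
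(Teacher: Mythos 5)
Your proof is correct and takes essentially the same route as the paper's: you observe that Condition (ASM-k) makes the sum in \eqref{lambdeczka} independent of the coordinates $x^n$, $n\neq k$, deduce that Condition (SM-k) holds with this choice of $\lambda^{k;x^ky^k}$, and then conclude by invoking Theorem \ref{thm:weak-Mk}. Your extra checks (well-definedness, $\FF$-adaptedness, nonnegativity of the candidate intensity) are sound elaborations of the same argument, which the paper leaves implicit.
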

\begin{proof}
Condition (ASM-k) implies that for any  $x^k,y^k \in S_k$, $x^k\ne y^k$,  the following sum
\begin{equation}\label{paskudnylabelek}
\sum_{\substack{y^n\in S_n,\\ n=1,2,\ldots,N, n \neq k}}\lambda^{(x^1,\ldots,x^k,\ldots,x^N)(y^1,\ldots,y^k,\ldots,y^N)}_t,
\end{equation}
does not depend on $x^1, \ldots, x^{k-1}, x^{k+1}, \ldots, x^N$.  Thus, condition \eqref{weak-Mk} holds with $\lambda^{k;x^ky^k}$ given by \eqref{lambdeczka}.
 Consequently, the result follows by application of Theorem \ref{thm:weak-Mk}.
\finproof
\end{proof}
Proposition  \ref{cor:56} below will play the key role in Section \ref{SCMCC}.
\begin{proposition}\label{cor:56}
Let $\cY=\set{Y^1, \ldots, Y^N}$ be a family of processes such that each $Y^k $ is an $(\mathbb{F},\mathbb{F}^{Y^k})$-CDMC with values in $S_k$, and with $\FF$-intensity $\Psi^k_t=[\psi^{k;x^ky^k}_t]_{x^k,y^k \in S_k}$. Let process $X$ satisfy Assumption (A). Assume that
\begin{itemize}
\item[(i)]  There exists a version of $\FF$--intensity $\Lambda$ which satisfies the following condition:

 for each $k=1,2,\ldots,N$, $x^k,y^k\in S_k,\ x^k\ne y^k$,
\begin{equation}\label{eq:gambdeczka}
\psi^{k;x^ky^k}_{t}=\sum_{\substack{y^n\in S_n,\\ n=1,2,\ldots,N, n \neq k}}\lambda^{(x^1,\ldots,x^k,\ldots,x^N)(y^1,\ldots,y^k,\ldots,y^N)}_t.
\end{equation}
\item[(ii)] The law of $X_0^k$ given $\mathcal{F}_T$ coincides with the law of $Y^k_0$ given $\mathcal{F}_T$ for all $k=1,2,\ldots,N$.
\end{itemize}
Then, $X$ satisfies the strong Markovian consistency property with respect to  $(\mathbb{F},\cY)$.
\end{proposition}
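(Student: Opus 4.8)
The plan is to recognize Proposition~\ref{cor:56} as a corollary obtained by chaining the algebraic sufficient condition of Proposition~\ref{suffstrong} with the characterization in Theorem~\ref{cor:54}. The argument is run separately for each fixed $k\in\set{1,2,\ldots,N}$ and then assembled over all $k$.

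First I would read hypothesis (i) correctly for a fixed $k$. Equation~\eqref{eq:gambdeczka} asserts that, for the chosen version of $\Lambda$, the sum
\[
\sum_{\substack{y^n\in S_n,\\ n=1,2,\ldots,N, n \neq k}}\lambda^{(x^1,\ldots,x^k,\ldots,x^N)(y^1,\ldots,y^k,\ldots,y^N)}_t
\]
equals $\psi^{k;x^ky^k}_t$. Since the right-hand side depends only on $x^k$ and $y^k$, this sum is independent of the coordinates $x^1,\ldots,x^{k-1},x^{k+1},\ldots,x^N$, which is precisely Condition (ASM-k), i.e.\ \eqref{Mn}, holding for this particular version of $\Lambda$.

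Next I would invoke Proposition~\ref{suffstrong} for this version of $\Lambda$. It yields that $X$ is strongly Markovian consistent with respect to $(X^k,\FF)$ and that Condition (SM-k), i.e.\ \eqref{weak-Mk}, holds with $\lambda^{k;x^ky^k}$ given by \eqref{lambdeczka}. By hypothesis (i) that sum is exactly $\psi^{k;x^ky^k}$, so Condition (SM-k) holds with $\lambda^{k;x^ky^k}=\psi^{k;x^ky^k}$. This is item (i) of Theorem~\ref{cor:54}, while item (ii) of that theorem is verbatim hypothesis (ii) here. Carrying this out for every $k$ and then applying Theorem~\ref{cor:54} delivers the conclusion.

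I do not expect a genuine obstacle, since the statement is a direct consequence of the two preceding results; the only point demanding care is the handling of the quantifier ``there exists a version'' in hypothesis (i). Condition (ASM-k) is version-dependent (cf.\ the Remark and Example~\ref{ex:aleuszy}), whereas Condition (SM-k) is not (by the Proposition preceding Theorem~\ref{thm:weak-Mk}). Hence it suffices that (ASM-k) hold for a single version: the consequence (SM-k) fed into Theorem~\ref{cor:54} is intrinsic to $X$ and independent of the version chosen. Making sure this passage from an existentially quantified (ASM-k) to the version-free (SM-k) is stated cleanly is the main thing to watch.
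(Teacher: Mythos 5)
Your proposal is correct and follows essentially the same route as the paper's own proof: deduce Condition (ASM-k) for the given version of $\Lambda$ from \eqref{eq:gambdeczka}, invoke Proposition \ref{suffstrong} to obtain Condition (SM-k) with $\lambda^{k;x^ky^k}=\psi^{k;x^ky^k}$, and then conclude via hypothesis (ii) and Theorem \ref{cor:54}. Your explicit handling of the ``there exists a version'' quantifier, using the version-independence of (SM-k), is a careful touch that the paper leaves implicit but is fully consistent with its argument.
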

\begin{proof}
We observe, that  for  $\FF$--intensity $\Lambda$  satisfying $(i),$
Condition (ASM-k) holds for every  $k=1,2,\ldots,N$.
Thus, by Proposition \ref{suffstrong} it follows that  \eqref{weak-Mk} holds with $\lambda^{k;x^ky^k} = \psi^{k;x^ky^k}$, $\forall x^k,y^k\in S_k,\ x^k\ne y^k$. From (ii)
%assumption that conditional laws of $X^k_0$ coincide with conditional laws of $Y^k$ for every $k=1,2,\ldots,N$
and Theorem \ref{cor:54}, we conclude that
$X$ is strongly Markovian consistent with respect to  $(\mathbb{F},\cY)$.\finproof
\end{proof}

\subsubsection{Condition (ASM-k) is not necessary for strong Markovian consistency} \label{niezakoniecznie}

Example \ref{ex:alejaja} below shows that, in general,  Condition (ASM-k) is not necessary for strong Markovian consistency of $X$  relative to $(X^k, \FF).$ Thus, Condition (SM-k) is (essentially) weaker than Condition (ASM-k). In fact, Condition (ASM-k) is so powerful that it implies strong Markovian consistency of $X$  relative to $(X^k, \FF)$ regardless of the  initial distribution of  process $X$. However, whether or not Condition (SM-k) holds depends also on the initial distribution of $X$.\footnote{
 This observation suggests that the relation between Condition (ASM-k) and Condition (SM-k) is analogous to the
relationship between  strong lumpability property and weak lumpability property (cf. Ball and Yeo \cite{BalYeo1993}, Burke and Rosenblatt \cite{BurRos1958}).}

\begin{example}\label{ex:alejaja}
Consider a bivariate process  $X=(X^1,X^2)$ taking values in a finite state space $S=\set{(0,0),(0,1),(1,0),(1,1)}$, and such  that it is an $(\FF,\FF^X)$--CDMC. Assume that $X$ admits the $\FF$-intensity $\Lambda$ of the form\,\footnote{It was shown in \cite[Proposition 4.13]{BieJakNie2014a} that one can always construct an $(\FF, \FF^X)$--CDMC with a given $\FF$-intensity $\Lambda$.}
\begin{equation}\label{eq:lambda-przyk}
	\Lambda_t =[\lambda^{xy}_t]_{x,y\in S}=
\bordermatrix{&(0,0)&(0,1)&(1,0)&(1,1)\cr
  (0,0) &  -a_t  & 0 & 0 & a_t \cr
  (0,1) & 0 & 0 & 0 & 0 \cr
  (1,0) & 0 & 0 & 0 & 0 \cr
  (1,1) & b_t & 0 & 0 & -b_t }.
\end{equation}
Let us
suppose that $\F_T$--conditional distribution of $X_0$ is given as
\begin{equation}\label{eq:init-dist-example}
\begin{gathered}
\P(X_0=(0,1)| \mathcal{F}_T)
=
\P(X_0=(1,0)| \mathcal{F}_T)=0,
\\
\P(X_0=(0,0)| \mathcal{F}_T)
=m_0, \quad
\P(X_0=(1,1)| \mathcal{F}_T)=m_1,
\end{gathered}
\end{equation}
where $m_0$, $m_1$ are $\F_0$ measurable random variables.

Now let us investigate  Condition (SM-1) relative to this $X$.
One can verify that c--transition field of $X$ (see \cite[Definition 4.4]{BieJakNie2014a} for the concept of c--transition field) has the following structure
\[
\bordermatrix{&(0,0)&(0,1)&(1,0)&(1,1)\cr
  (0,0) &  P^1_{00}(s,t)  & 0 & 0 & P^1_{01}(s,t) \cr
  (0,1) & 0 & 1 & 0 & 0 \cr
  (1,0) & 0 & 0 & 1 & 0 \cr
  (1,1) & P^1_{10}(s,t) & 0 & 0 & P^1_{11}(s,t) }.
\]
Thus, in view of \cite[Proposition 4.6]{BieJakNie2014a}, we conclude that for any $\tT$
\be\label{eq:P01}
\P( X_t= (0,1) | \F_T  )=  \P( X_t= (1,0) | \F_T  )=0,
\ee
and that
\[
\P( X_t= (0,0) | \F_T  ) = m_0 P^1_{00}(0,t) + m_1 P^1_{10}(0,t).
\]
Consequently, as we will show now Condition (SM-1) (i.e. \eqref{weak-Mk} for $k =1$)
 is satisfied here.
In fact, taking $x^1 = 0, y^1 = 1$ and invoking \eqref{eq:init-dist-example},  we obtain that
\begin{align*}
%x^1 = 0
%x^2 = 0
%y^1 = 1
& \I_\set{ X^1_t = 0} (\lambda^{(0,X^2_t)(1,0)}_t + \lambda^{(0,X^2_t)(1,1)}_t)
\\
&
=\I_\set{ X^1_t = 0, X^2_t =0}
(\lambda^{(0,0)(1,0)}_t + \lambda^{(0,0)(1,1)}_t )+
%x^1 = 0
%x^2 = 1
%y^1 = 1
\I_\set{ X^1_t = 0, X^2_t = 1}
(\lambda^{(0,1)(1,0)}_t + \lambda^{(0,1)(1,1)}_t )
\\
&=
\I_\set{ X^1_t = 0, X^2_t =0}
a_t
+
\I_\set{ X^1_t = 0, X^2_t = 1} 0
=
(\I_\set{ X^1_t = 0, X^2_t =0}
+
\I_\set{ X^1_t = 0, X^2_t = 1} ) a_t  =\I_\set{ X^1_t = 0} a_t,
\end{align*}
where the third equality follows from the fact that
%under \eqref{eq:init-dist-example} it holds
\begin{equation}\label{zerunko}
\I_\set{ X^1_t = 0, X^2_t = 1} = 0 , \quad dt \otimes d \P,
\end{equation}
which is a consequence of \eqref{eq:P01}.
 Analogously, for $x^1 = 1, y^1 = 0$ it holds that
\begin{align*}
%x^1 = 1
%x^2 = 0
%y^1 = 0
& \I_\set{ X^1_t = 1} (\lambda^{(1,X^2_t)(0,0)}_t + \lambda^{(1,X^2_t)(0,1)}_t)
\\
&
=\I_\set{ X^1_t = 1, X^2_t =0}
(\lambda^{(1,0)(0,0)}_t + \lambda^{(1,0)(0,1)}_t )
+
\I_\set{ X^1_t = 1, X^2_t = 1}
(\lambda^{(1,1)(0,0)}_t + \lambda^{(1,1)(0,1)}_t )
\\
&=
\I_\set{ X^1_t = 1, X^2_t =0}
0
+
\I_\set{ X^1_t = 1, X^2_t = 1} b_t
=
(\I_\set{ X^1_t = 1, X^2_t =0}
+
\I_\set{ X^1_t = 1, X^2_t = 1} ) b_t
=\I_\set{ X^1_t = 1} b_t,
\end{align*}
where we used the fact that
\begin{equation}\label{zerunko2}
\I_\set{ X^1_t = 1, X^2_t = 0} = 0 , \quad dt \otimes d \P.
\end{equation}
Thus, Condition  (SM-1) %\eqref{weak-M1}
holds here for
\[
\lambda^{1;01}_t = a_t, \quad
\lambda^{1;11}_t = b_t.
\]
Similarly,  one can show that Condition (SM-2) is fulfilled for
\[
\lambda^{2;01}_t = a_t, \quad
\lambda^{2;11}_t = b_t.
\]
Thus, $X$ is strongly Markovian consistent with respect to $\FF$.
However, Condition (ASM-1) is not satisfied here (regardless of the initial distribution of $X$). This is because for every $\tT$ we have that
\[
\lambda^{(0,0)(1,0)}_t+\lambda^{(0,0)(1,1)}_t=a_t
\neq
0 =
\lambda^{(0,1)(1,0)}_t+\lambda^{(0,1)(1,1)}_t.
\]
\finproof
\end{example}

\begin{remark}
It is worth noting that strong Markovian consistency depends on the initial distribution of $X$.
{ Consequently, we may have two processes: $X$ which is $(\FF,\FF^X)$-CMC  and $Y$  which is $(\FF,\FF^Y)$-CMCs with the same $\FF$-intensity, such that one of them is strongly Markovian consistent and the other one is not.}
%\cancel{ Consequently, we may have two $(\FF,\GG)$-CMCs}
%\cancel{  with the same $\FF$-intensity,
% such that one of them is strongly Markovian consistent and the }
%\cancel{   other one is not.}
To see this, note that \eqref{weak-Mk}  would not be satisfied  for {$(\FF,\FF^Y)$}-CMC process $Y$ taking values in a finite state space $S=\set{(0,0),(0,1),(1,0),(1,1)}$, endowed with the same  $\FF$-intensity as in Example \ref{ex:alejaja}, and with the conditional initial distribution such that either
\[
\P( \P(Y_0=(0,1)| \mathcal{F}_T) >0 ) >0,
\]
or
\[\P( \P(Y_0=(1,0)| \mathcal{F}_T) > 0 ) >0.\]
Indeed, for process $Y$ equality \eqref{zerunko} is not satisfied, and thus Condition (SM-1) does not hold.
Consequently, process $Y$ is not strongly Markovian consistent with respect to $\FF$.
%
%The example illustrates, in particular, that whether Condition (SM-k) is satisfied or not for a given $(\FF, \FF^X)$-DCMC X depends on the initial distribution of $X$.
\end{remark}

In the next example we will show that an {$(\FF,\FF^X)$}-CMC $X$ may have intensity for which Condition (ASM-k) does not hold, and it may admit another version of intensity, in the sense of \cite[Definition 2.5.]{BieJakNie2014a}, for which Condition (ASM-k) is fulfilled.
\begin{example}\label{ex:aleuszy}
Let us take $X$ as in Example \ref{ex:alejaja}. In that example we proved that Conditions (ASM-1) and (ASM-2) are not satisfied by the $\FF$-intensity $\Lambda$ given by \eqref{eq:lambda-przyk}. However there exists
another  version of $\FF$-intensity of $X$, say $\Gamma$, for which Conditions (ASM-1) and (ASM-2)  are satisfied.  Indeed, let us consider process $\Gamma$ defined by
\[
	\Gamma_t
=[\gamma^{xy}_t]_{x,y\in S}=
\bordermatrix{&(0,0)&(0,1)&(1,0)&(1,1)\cr
  (0,0) &  -a_t  & 0 & 0 & a_t \cr
  (0,1) & b_t & -a_t - b_t & 0 & a_t \cr
  (1,0) & b_t & 0 & -a_t - b_t & a_t \cr
  (1,1) & b_t & 0 & 0 & -b_t }.
\]
By \cite[Proposition 2.6 (ii)]{BieJakNie2014a} the process  $\Gamma$ is an $\FF$-intensity of  $X$, because in view of \eqref{zerunko} and \eqref{zerunko2} it holds that
\[
\int_0^t (\Gamma_u - \Lambda_u)^\top H_u du = 0, \quad \tT.
\]
Finally, we see that Conditions (ASM-1) and (ASM-2) are satisfied for $\Gamma$, since
\[
\gamma^{(0,0)(1,0)}_t+\gamma^{(0,0)(1,1)}_t=a_t
=\gamma^{(0,1)(1,0)}_t+\gamma^{(0,1)(1,1)}_t,
\]
\[
\gamma^{(1,1)(0,0)}_t+\gamma^{(1,1)(0,1)}_t=b_t
=\gamma^{(1,0)(0,0)}_t+\gamma^{(1,0)(0,1)}_t,
\]
\[
\gamma^{(0,0)(0,1)}_t+\gamma^{(0,0)(1,1)}_t=a_t
=\gamma^{(1,0)(0,1)}_t+\gamma^{(1,0)(1,1)}_t,
\]
\[
\gamma^{(1,1)(0,0)}_t+\gamma^{(1,1)(1,0)}_t=b_t
=\gamma^{(0,1)(0,0)}_t+\gamma^{(0,1)(1,0)}_t.
\]
\end{example}

\section{Weak Markovian Consistency of Conditional Markov Chains}

As in Section \ref{sec:SMC} let us consider $X=(X^1,\ldots,X^N)$ -- a multivariate ${(\FF,\FF^X)}$-CMC with values in $S:=\text{\sf X}_{k=1}^N\, S_k$ (recall that $S_k$ is a finite set, $k=1,\ldots,N$), and admitting an $\FF$-intensity $\Lambda$.

We will study here the concept of weak Markovian consistency. In many respects, this concept is more important in practical applications than the concept of strong Markovian consistency. As it will be seen below, the definitions and results regarding the weak Markovian consistency to some extent parallel those regarding the strong Markovian consistency. But, as always, ``the devil is in the details'', so the reader is kindly asked to be patient with presentation that follows.

\begin{definition}
(i) Let us fix $k\in\set{1,\ldots,N}$. We say that the process $X$ satisfies the  weak Markovian consistency property relative to $(X^k, \, \mathbb{F})$ if  for every $x^k_1,  \ldots, x^k_m\in S_k $
and for all $0 \leq t \leq t_1 \leq \ldots \leq t_m \leq T,$ it holds that
\begin{equation}\label{eq:markov-cons-weak}
\mathbb{P}\left( X^k_{t_m } =x^k_{m},  \ldots, X^k_{t_1 } =x^k_{1}| \mathcal{F}_t \vee  \mathcal{F}^{X^k}_t \right) =   \mathbb{P} \left( X^k_{t_m } =x^k_{m},  \ldots, X^k_{t_1 } =x^k_{1}  | \mathcal{F}_t \vee \sigma(X^k_t)    \right),
\end{equation}
or, equivalently, if  $X^k$ is a $(\mathbb{F}, \mathbb{\FF}^{X^k})$-CMC.

\noindent (ii) If  $X$ satisfies the
 weak Markovian consistency property with respect to $(X^k, \mathbb{F})$ for all $k\in\set{1,\ldots,N}$, then we say that  $X$ satisfies the
 weak Markovian consistency property with respect to $\mathbb{F}$.
\end{definition}

 \begin{definition}
Let $\cY=\set{ Y^1, \ldots, Y^N}$ be a family of processes such that each $Y^k $ is an $(\mathbb{F},\mathbb{F}^{Y^k})$-CMC with values in $S_k$.\\
(i) Let us fix $k\in \set{1,2,\ldots,N}$ and let the process $X$ satisfy the weak Markovian consistency property with respect to  $(X^k, \mathbb{F})$.  If the conditional law of $X^k$ given $\mathcal{F}_T$ coincides with the conditional law of $Y^k$ given $\mathcal{F}_T$, then we say that   $X$ satisfies the weak Markovian consistency property with respect to  $(X^k,\mathbb{F}, Y^k)$.\\
(ii)  If  $X$ satisfies the weak Markovian consistency property with respect to  $(X^k,\mathbb{F}, Y^k)$  for every $k\in \set{1,2,\ldots,N}$, then we say that $X$ satisfies the weak Markovian consistency property with respect to  $(\mathbb{F},\cY)$.
\end{definition}

\subsection{Sufficient and necessary conditions for weak Markovian consistency~I}
 We postulate in this subsection that the process $X$ satisfies Assumption (A) (see Section \ref{sekcjazrameczka}).

We aim here at providing a condition characterizing weak Markovian consistency of the process $X$. We start from introducing\footnote{The acronym WM comes from  Weak Markov.}
\vskip 10pt
\noindent
{\sf Condition (WM-k):$\ $}
There exist $\FF$-{adapted} processes $\lambda^{k;x^ky^k}$, $x^k,y^k \in S_k$, $x^k\neq y^k$, such that
\begin{equation}\lab{vn}
\begin{aligned}
\I_{\{X^k_{t}=x^k\!\}}\!\!\!\!\sum_{\substack{x^n,y^n\in {S}_n \\n=1,2,\ldots,N, n \neq k }} & \!\!\!\!\lambda^{(x^1, \ldots, x^N)(y^1, \ldots, y^N)}_t
\EP \left(\I_{\{X^1_t=x^1, \ldots, X^{k-1}_t=x^{k-1},X^{k+1}_t=x^{k+1},\ldots, X^N_t=x^N\}} \Big\vert  {\mathcal F}_t \vee {\mathcal F}^{X^k}_t\!\right)
\\
& \!=\! \I_{\{X^k_{t}=x^k\}}\lambda^{k;x^ky^k}_t ,
 \quad {dt \otimes
d\mathbb{P} \text{-a.e.} }  \  \forall x^k\!,y^k\!\in\! {S}_k,\ x^k \neq y^k.
\end{aligned}
\end{equation}

Similarly as in the case of Condition (SM-k) we have the following proposition
\begin{proposition}
Let $X$ satisfy Assumption (A) and $\Lambda$, $\wh{\Lambda}$ be $\FF$-intensities of $X$. Then (WM-k)  holds for $\Lambda$ if and only if it holds for $\wh{\Lambda}$.
\end{proposition}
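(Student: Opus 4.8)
The plan is to reduce everything to the characterization of versions of the $\FF$-intensity supplied by \cite[Proposition 2.6]{BieJakNie2014a}. First I would record what that result says in the present situation: two $\FF$-adapted matrix processes $\Lambda$ and $\wh{\Lambda}$ are both $\FF$-intensities of $X$ precisely when $\int_0^t(\wh{\Lambda}_u-\Lambda_u)^\top H_u\,du=0$ for all $\tT$. Writing out the $y$-th coordinate of this vector identity and noting that at each instant only the indicator of the occupied state contributes, this is equivalent to the a.e.\ identity
\be
\I_\set{X_t=x}\lambda^{xy}_t=\I_\set{X_t=x}\wh\lambda^{xy}_t,\qquad dt\otimes d\P\mhyphen a.e.,\quad\forall\,x,y\in S.
\ee
Since the relation between $\Lambda$ and $\wh{\Lambda}$ is symmetric, it suffices to show that if (WM-k) holds for $\Lambda$ then it holds for $\wh{\Lambda}$, in fact with the very same processes $\lambda^{k;x^ky^k}$.

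The heart of the argument is to fold the factors standing outside the conditional expectation in \eqref{vn} into it. Fixing $x^k,y^k\in S_k$ with $x^k\neq y^k$ and a tuple $(x^n,y^n)_{n\neq k}$, and writing $x=(x^1,\ldots,x^N)$, I would observe that $\lambda^{xy}_t$ is $\F_t$-measurable and $\I_\set{X^k_t=x^k}$ is $\sigma(X^k_t)\subseteq\F^{X^k}_t$-measurable, so $\I_\set{X^k_t=x^k}\lambda^{xy}_t$ is $\F_t\vee\F^{X^k}_t$-measurable and may be moved inside $\EP(\,\cdot\mid\F_t\vee\F^{X^k}_t)$. Merging $\I_\set{X^k_t=x^k}$ with the inner indicator of the remaining coordinates produces the full indicator $\I_\set{X_t=x}$, so each summand on the left side of \eqref{vn} becomes
\be
\EP\!\left(\I_\set{X_t=x}\,\lambda^{xy}_t\,\Big\vert\,\F_t\vee\F^{X^k}_t\right).
\ee

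Finally I would apply the displayed a.e.\ identity from the first step inside each such conditional expectation, replacing $\I_\set{X_t=x}\lambda^{xy}_t$ by $\I_\set{X_t=x}\wh\lambda^{xy}_t$; since there are only finitely many pairs $(x,y)$, the corresponding exceptional null sets may be discarded simultaneously. Hence the entire left side of \eqref{vn} is unchanged under $\Lambda\mapsto\wh{\Lambda}$, $dt\otimes d\P$-a.e., so \eqref{vn} for $\wh{\Lambda}$ holds with the same right side $\I_\set{X^k_t=x^k}\lambda^{k;x^ky^k}_t$, which is exactly (WM-k) for $\wh{\Lambda}$; the reverse implication follows by symmetry. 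The main obstacle will be the measurability bookkeeping that legitimizes folding $\I_\set{X^k_t=x^k}\lambda^{xy}_t$ into the conditional expectation and collapsing the two indicators into $\I_\set{X_t=x}$. This is precisely the extra wrinkle distinguishing the present proof from the (SM-k) analogue, where no conditional expectation appears and the claim is a direct consequence of \cite[Proposition 2.6]{BieJakNie2014a}.
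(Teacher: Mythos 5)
Your proof is correct and is essentially the paper's own route: the paper gives no written argument, declaring the proposition ``a direct consequence of \cite[Proposition 2.6]{BieJakNie2014a}'', and your reduction---extracting from that proposition the a.e.\ identity $\I_\set{X_t=x}\lambda^{xy}_t=\I_\set{X_t=x}\wh{\lambda}^{xy}_t$ characterizing versions of the $\FF$-intensity, and folding the $\F_t\vee\F^{X^k}_t$-measurable factors into the conditional expectation to merge the indicators into $\I_\set{X_t=x}$---is precisely the fleshing-out that citation calls for (and matches how the paper itself uses Proposition 2.6, e.g.\ in Example \ref{ex:aleuszy}). The one point worth noting is that pulling $\I_\set{X^k_t=x^k}\lambda^{xy}_t$ inside the conditional expectation needs no integrability hypothesis because every factor involved is nonnegative: since $x^k\neq y^k$, only off-diagonal entries $\lambda^{xy}_t\geq 0$ occur in the sum.
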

\noindent
The proposition is a direct consequence of \cite[Proposition 2.6]{BieJakNie2014a}.

\smallskip
 The next theorem characterizes weak Markovian consistency in the present set-up.

\bt\lab{wmc}
The process $X$  with an $\FF$-intensity $\Lambda$ is weakly Markovian consistent relative to $(X^k,\FF)$ if and only if Condition (WM-k) is satisfied.
 Moreover,
 $X^k$ admits an $\mathbb{F}$-intensity process
\be\label{eq:F-int-lak}
\Lambda^k :=[\lambda^{k;x^ky^k}]_{x^k,y^k\in {S}_k},
\ee
   with
$\lambda^{k;x^kx^k}$ given by
\[
    \lambda^{k;x^kx^k}_t = - \sum_{y^k \in {S}_k, y^k \neq x^k } \lambda^{k;x^ky^k}_t,
    \qquad
    \forall x^k \in {S}_k
    .
\]
\et
%
%\bt\lab{wmc} The component $X^1$ of $X$ is a $(\FF, \FF^{X^1})$-CMC
%if and only if
%\begin{align}\lab{vn}
%\I_{\{X^1_{t}=x^1\!\}}\!\!\!\!\sum_{x^2,y^2\in {S}_2} \!\!\!\!\lambda^{(x^1x^2)(y^1y^2)}_t
%\EP \left(\I_{\{X^2_t=x^2\}}|{\mathcal F}_t \vee {\mathcal F}^{X^1}_t\!\right)
%%\mathbb{P}\left(X^2_t=x^2|{\mathcal F}^{X^1}_t\!\right)
%\!=\! \I_{\{X^1_{t}=x^1\}}\lambda^{1;x^1y^1}_t \quad {dt \otimes
%d\mathbb{P} \text{-a.s.} }  \  \forall x^1\!,y^1\!\in\! {\mathcal
%X}^1\!,\ x^1 \neq y^1
%\end{align}
%for some locally integrable $\FF$-adapted processes $\lambda^{1;x^1y^1}$ The
%intensity process of $X^1$ is $\Lambda^1 (t)
%=[\lambda^{1;x^1y^1}_t]_{x^1,y^1\in S_1}$   with
%$\lambda^{1;x^1x^1}$ given by
%\[
%    \lambda^{1;x^1x^1}_t = - \sum_{y^1 \in {S}^1, y^1 \neq x^1 } \lambda^{1;x^1y^1}_t
%    \qquad
%    \forall x^1 \in \mathcal{S}_1
%    .
%\]
%\et
\begin{proof}
%
%\Mn{dostatecznosci takiego warunku nie potrafie wywniowskowac/udowodnic  dla konstrukcji z poprzedniego rozdzialu. }
%\Mn{Tutaj dowod przy pomocy twierdzenia charakteryzujacego ktore wymaga zalozen 1) immersji $\FF$ w $\FF^{X^1}$ oraz  2) ortogonalnosci $\FF$ martyngalow z martyngalem $M^1$ odpowiadajacym wspolrzednej $X^1$ duzego procesu $X$. TYCH ZALOZEN NIE WPISALEM DO SFORMULOWANIA TWIERDZENIA.}
For simplicity of notation we give proof for $k=1$ and $N=2$. In this case, \eqref{vn} takes the following form (recall our notation: $H^{k;x^k}_t=\I_{\{X^k_{t}=x^k\!\}}$)
\begin{align}\lab{vn-1}
H^{1;x^1}_t\!\!\!\!\sum_{x^2,y^2\in {S}_2} \!\!\!\!\lambda^{(x^1x^2)(y^1y^2)}_t
\EP \left(H^{2;x^2}_t|{\mathcal F}_t \vee {\mathcal F}^{X^1}_t\!\right)
\!=\! H^{1;x^1}_t\lambda^{1;x^1y^1}_t, \quad {dt \otimes
d\mathbb{P} \text{-a.e.} }  \  \forall x^1\!,y^1\!\in\! {\mathcal
X}^1\!,\ x^1 \neq y^1.
\end{align}

\noindent \underline{Step 1:}

In Step 1 of the proof of Theorem \ref{thm:weak-Mk} we have shown that the process $K^{x^1y^1}$ given in \eqref{eq:M-X1} is an $\FF \vee \FF^X\,$--$\,$local martingale.
 Now let us denote by $\widetilde{K}^{x^1y^1}$  the optional projection of $K^{x^1y^1}$ on the filtration $\FF \vee \FF^{X^1}$.\footnote{We note that for existence of optional projections we do not need  right continuity of the filtration (see Ethier, Kurtz \cite[Theorem 2.4.2]{ethkur1986}).}
Observe that the sequence
\[
\tau_n := \inf { \bigg\{ t \geq 0 : H^{1;x^1y^1}_t \geq n \ \textrm{or} \ \int_0^t  \bigg( \sum_{x^2, y^2 \in S_2 }  \lambda^{(x^1,x^2)(y^1,y^2)}_u \bigg) du
\geq  n  \bigg\} },\ n=1,2,\ldots,
\]
is sequence of $\FF \vee \FF^{X}$--stopping times, as well as $\FF \vee \FF^{X^1}$--stopping times, and that is a reducing sequence
for $K^{x^1y^1}$. So, by \cite[Theorem 3.7]{FolProt2011}, the process  $\widetilde{K}^{x^1y^1}$ is an $\FF \vee \FF^{X^1}\,$--$\,$ local martingale.
Following the reasoning in Theorem 5.25 in \cite{HeWanYan1992}, we obtain that
\[
\E\bigg( \int_0^t  H^{1;x^1}_u \!\!\!\! \sum_{x^2, y^2 \in S_2 } \!\!\!\!  H^{2;x^2}_u \lambda^{(x^1,x^2)(y^1,y^2)}_u du \Big | \F_{t} \vee \F^{X^1}_t \bigg)
=
\int_0^t  H^{1;x^1}_u \E\bigg(\sum_{x^2, y^2 \in S_2 } \!\!\!\!  H^{2;x^2}_u \lambda^{(x^1,x^2)(y^1,y^2)}_u \Big | \F_{u} \vee \F^{X^1}_u \bigg)du
\]
and hence the process $\widetilde{K}^{x^1y^1}$ given as
%    Now let us denote by $\widetilde{K}^{x^1y^1}$  the optional projection of $K^{x^1y^1}$ on the filtration $\FF \vee \FF^{X^1}$. Since there exist a reducing sequence
%for $K^{x^1y^1}$
%of $\FF \vee \FF^{X}$ stopping times which are also $\FF \vee \FF^{X^1}$ stopping times we have, by \cite[Theorem 3.7]{FolProt2011}, that  $\widetilde{K}^{x^1y^1}$ is an $\FF \vee \FF^{X^1}\,$--$\,$local martingale. As such reducing sequence we can take  \[
%\tau_n := \inf { \bigg\{ t \geq 0 : H^{1;x^1y^1}_t \geq n \ \textrm{or} \ \int_0^t  \bigg( \sum_{x^2, y^2 \in S_2 }  \lambda^{(x^1,x^2)(y^1,y^2)}_u \bigg) du > n  \bigg\} }.
%\]
%Thus, in view of Theorem 5.25 in He,  Wang and Yan \cite{HeWanYan1992}, process $\widetilde{K}^{x^1y^1}$ given as
\begin{equation}\label{falka}
\widetilde{K}^{x^1y^1}_t = H^{1;x^1y^1}_t - \int_0^t  H^{1;x^1}_u \E\bigg(\sum_{x^2, y^2 \in S_2 } H^{2;x^2}_u \lambda^{(x^1,x^2)(y^1,y^2)}_u \Big | \F_{u} \vee \F^{X^1}_u \bigg)du,\quad t \in [0,T].
\end{equation}
%Since projections of local martingales are not necessarily local martingales.
is an $\FF \vee \FF^{X^1}\,$--$\,$local martingale.
%\footnote{$\widetilde{K}^{x^1y^1}$ is the optional projection of $K^{x^1y^1}$ on the filtration $\FF \vee \FF^{X^1}$. }

\noindent \underline{Step 2:}

Now, suppose that \eqref{vn-1} holds, then we have that
\[
\widetilde{K}^{x^1y^1}_t = H^{1;x^1y^1}_t - \int_0^t  H^{1;x^1}_u \lambda^{1;x^1y^1}_u du,\quad t \in [0,T].
\]
Thus according to \cite[Remark 2.9]{BieJakNie2014a} we can apply \cite[Theorem 2.8]{BieJakNie2014a} %\ref{thm:int-comp}
 to process $X^1$ in order to conclude that  $\Lambda^1$ is an $\FF$-intensity of $X^1$, so that $\r^d$-valued process ${\widetilde M}^1=(\wt{M}^{1;x^1} ;{x^1\in S_1})^\top,$ given as
\[
    {\widetilde M}^1_t= H^1_t - \int_0^t (\Lambda^1_u)^{\!\top} H^1_u du, \quad t \in [0,T],
\]
is an $\FF \vee \FF^{X^1}\,$--$\,$local martingale.

Next, using \cite[Theorem 2.11]{BieJakNie2014a}, %\ref{thm:CMC-char}
 we will show   that $X^1$ is an $(\FF , \FF^{X^1})$-CMC. Towards this end, we first observe that Assumption (A) implies, by \cite[Corollary 4.7]{BieJakNie2014a}, that $\FF$ is immersed in $\FF \vee \FF^{X}$, and thus $\FF$ is immersed $\FF \vee \FF^{X^1}$.   Moreover, as we will show now, all real valued $\FF$-local martingales are orthogonal to  processes $M^x, x\in S$, that are components of process $M$  defined in \eqref{eq:Mart-M-nowy}. Indeed, let us take an arbitrary real valued $\FF$-local martingale $N$. Then, by definition of $M$ and the fact that $M$ is a pure-jump local martingale, we have, for any $(x^1,x^2)\in S$,
\begin{align}\label{eq:NM-ort}
[N, M^{(x^1,x^2)}]_t
=\sum_{ 0< u \leq t } \Delta N_u \Delta M^{(x^1,x^2)}_u
=\sum_{ 0< u \leq t } \Delta N_u \Delta H^{(x^1,x^2)}_u,\quad t \in [0,T].
\end{align}
Now, since the jump times of $N$ are $\FF$-stopping times, then by Proposition 6.1 in \cite{JakNie2010} we conclude  that $N$ and $X$ do not have common jump times, or, equivalently, $N$ and $M$, do not have common jump times. Therefore, $[N, M^{x^1,x^2}] = 0$, so that $N$ is orthogonal to  processes $M^{x^1,x^2}$.

From the above we will deduce that all real valued $\FF$-local martingales  are orthogonal to processes $\wt{M}^{1;x^1}, x^1\in S_1$, that  are components of process $\wt{M}^{1}$ defined above.
In fact taking $N$  as above we see that orthogonality of $N$ and $\wt{M}^{1;x^1}$ follows from the following reasoning
\begin{align*}
[N, \wt{M}^{1;x^1}]_t &= \sum_{ 0< u \leq t } \Delta N_u \Delta \wt{M}^{1;x^1}_u = \sum_{ 0< u \leq t } \Delta N_u \Delta H^{1;x^1}_u
=
%\mn{napewno!!!}
\sum_{ 0< u \leq t } \sum_{x^2 \in S_2} \Delta N_u \Delta H^	{(x^1,x^2)}_u
\\
&=
\sum_{x^2 \in S_2} \sum_{ 0< u \leq t } \Delta N_u \Delta H^{(x^1,x^2)}_u
=
\sum_{x^2 \in S_2} [N, M^{(x^1,x^2)}]_t =0,\quad t \in [0,T],
\end{align*}
where the penultimate equality follows from \eqref{eq:NM-ort}.

Consequently, we see that assumptions of \cite[Theorem 2.11]{BieJakNie2014a} %\ref{thm:CMC-char}
 are fulfilled (taking there $X=X^1$ and $\GG=\FF^{X^1}$), and thus we may conclude that $X^1$ is $(\FF , \FF^{X^1})$-CMC with $\FF$-intensity $\Lambda^1 (t) =[\lambda^{1;x^1y^1}_t]_{x^1,y^1\in S_1}$.

\noindent \underline{Step 3:}

Conversely, assume that $X^1$ is an $(\FF,\FF^{X^1})$-CMC with $\FF$-intensity $\Lambda^1 (t) =[\lambda^{1;x^1y^1}_t]_{x^1,y^1\in S_1}$. Fix $x^1, y^1 \in S_1$, $x^1 \neq y^1$. Since $\Lambda^1$ is an $\FF$-intensity of $X^1$, then the process $\widehat{K}^{x^1y^1}$ given as
\[
\widehat{K}^{x^1y^1}_t = H^{1;x^1y^1}_t - \int_0^t  H^{1;x^1}_u  \lambda^{1;x^1y^1}_udu, \quad t \in [0,T],
\]
is an $\FF\vee \FF^{X^{1}}\,$--$\,$local martingale. Recalling  that process $\widetilde{K}^{x^1y^1}$ given in \eqref{falka} is an $\FF \vee \FF^{X^1}\,$--$\,$local martingale, we see that the difference $\widetilde{K}^{x^1y^1} - \wh{K}^{x^1y^1}$, which is given by
\[
\widetilde{K}^{x^1y^1}_t - \wh{K}^{x^1y^1}_t =
\int_0^t\, H^{1;x^1}_u
\left(
\sum_{x^2, y^2 \in S_2 } \lambda^{(x^1,x^2)(y^1,y^2)}_u \E\bigg(H^{2;x^2}_u
| \F_{u} \vee \F^{X^1}_u \bigg)
 - \lambda^{1;x^1y^1}_u \right) du,
 \]
  is a continuous $\FF \vee \FF^{X^1}\,$--$\,$local martingale of finite variation, and therefore
is equal to $0$. This implies \eqref{vn-1}.

The proof of the theorem is complete.
\finproof
\end{proof}
%{
%\begin{remark}\label{rem:wmc}
%If the process $X$ is weakly Markovian consistent relative to $(X^k, \FF)$ and if it admits an $\FF$-intensity $\Gamma$, then $\Gamma$ satisfies Condition (WM-k) and processes $\gamma^{k;x^ky^k}, x^k, y^k \in S_k, x_k \neq y_k$ appearing in Condition (WM-k) define, in the same way as in Theorem \ref{wmc}, an $\FF$-intensity of $X^k$.
%\end{remark}
%}

%Won do buczka
%$\skull$
%The next theorem  gives sufficient and necessary conditions for weak Markovian consistency property of $X$   with respect to  $(\mathbb{F},\cY)$. This theorem will be used to prove Proposition \ref{cacy}, which will be critically important in the study of weak CMC copulae in Section \ref{sec:wcmc}.
%$\skull $

The next theorem  gives sufficient and necessary conditions for weak Markovian consistency property of $X$   with respect to  $(\mathbb{F},\cY)$.
%This theorem will be used in the study of weak CMC copulae in Section \ref{sec:wcmc}.
We omit the proof of this theorem, as its proof can be derived from the proof of Theorem \ref{cor:54}   by using Theorem \ref{wmc} instead of Theorem \ref{thm:weak-Mk}.

\begin{theorem}\label{wcor:54}
%Let $Y=Y^1, \ldots, Y^N$ be processes such that each $Y^k $ is an $(\mathbb{F},\mathbb{F}^{Y^k})$-CMC with values in $S_k$.
Let $\cY=\set{Y^1, \ldots, Y^N}$ be a family of processes such that each $Y^k $ is an $(\mathbb{F},\mathbb{F}^{Y^k})$-CDMC,  with values in $S_k$, and with $\FF$-intensity $\Psi^k_t=[\psi^{k;x^ky^k}_t]_{x^k,y^k \in S_k}$.
Let process $X$ satisfy Assumption (A) and let $\Lambda$ be a version of its $\FF$-intensity. Then, $X$ satisfies the weak Markovian consistency property with respect to  $(\mathbb{F},\cY)$ if and only if for all $k=1,2,\ldots,N$, the following hold:
\begin{itemize}
\item[(i)] Condition %\eqref{weak-Mk}
(WM-k) is satisfied with
$\Psi^k$ in place of $\Lambda^k$.
\item[(ii)] The law of $X_0^k$ given $\mathcal{F}_T$ coincides with the law of $Y^k_0$ given $\mathcal{F}_T$. %    for all $k=1,2,\ldots,N$.
\end{itemize}
\end{theorem}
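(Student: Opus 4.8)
The plan is to reproduce the proof of Theorem \ref{cor:54} almost verbatim, with Theorem \ref{wmc} taking over the role that Theorem \ref{thm:weak-Mk} played there, and with Lemma \ref{lem:paskudny} supplying the comparison of conditional laws in both directions. Throughout, Assumption (A) is in force, so for each fixed $k\in\set{1,\ldots,N}$ the full conclusion of Theorem \ref{wmc}---the equivalence between Condition (WM-k) and weak consistency relative to $(X^k,\FF)$, together with the explicit form of the $\FF$-intensity of $X^k$---is available. To prove sufficiency I would assume (i) and (ii) and fix $k$. Since (WM-k) holds with $\psi^{k;x^ky^k}$ substituted for $\lambda^{k;x^ky^k}$, Theorem \ref{wmc} gives that $X$ is weakly Markovian consistent relative to $(X^k,\FF)$ and, more precisely, that $X^k$ is an $(\FF,\FF^{X^k})$-CMC admitting the $\FF$-intensity $\Psi^k$. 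Thus $X^k$ and $Y^k$ are CMCs sharing the same $\FF$-intensity $\Psi^k$, and by (ii) their $\F_T$-conditional initial laws agree as well; Lemma \ref{lem:paskudny} then forces the $\F_T$-conditional law of $X^k$ to coincide with that of $Y^k$, which is precisely weak Markovian consistency with respect to $(X^k,\FF,Y^k)$. As $k$ is arbitrary, $X$ is weakly Markovian consistent with respect to $(\FF,\cY)$.

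For necessity I would assume $X$ is weakly Markovian consistent with respect to $(\FF,\cY)$ and fix $k$. By definition this already entails consistency relative to $(X^k,\FF)$ together with equality of the $\F_T$-conditional laws of $X^k$ and $Y^k$; evaluating the latter equality at time $0$ gives (ii) at once. Next, Theorem \ref{wmc} applied to the $(\FF,\FF^{X^k})$-CMC $X^k$ produces an $\FF$-intensity $\Lambda^k$ for which Condition (WM-k) holds. The only thing left is to identify $\Lambda^k$ with $\Psi^k$, and here Lemma \ref{lem:paskudny} is used in the reverse direction: since $X^k$ and $Y^k$ have identical $\F_T$-conditional laws (hence identical conditional initial laws), their $\FF$-intensities must coincide $dt\otimes d\P$-a.e., so $\Lambda^k=\Psi^k$ and (WM-k) holds with $\Psi^k$ in place of $\Lambda^k$, which is (i).

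The genuinely delicate feature---and the reason the weak statement is not a purely mechanical transcription of the strong one---is the conditional expectation $\EP(\,\cdot\mid\F_t\vee\F^{X^k}_t)$ appearing inside \eqref{vn}, which has no counterpart in \eqref{weak-Mk}. This is exactly why one cannot simply reuse the algebraic route: the passage from the $\FF\vee\FF^X$-intensity of the whole process $X$ to the $\FF\vee\FF^{X^k}$-dynamics of the single component $X^k$ requires an optional-projection argument. However, that difficulty is entirely absorbed into Theorem \ref{wmc}, whose proof already carries it out, so at the level of the present theorem it does not resurface. Consequently the main obstacle reduces to the two-directional application of Lemma \ref{lem:paskudny}: deducing ``same conditional law'' from ``same intensity plus same initial law'' in the sufficiency part, and extracting ``same intensity'' from ``same conditional law'' in the necessity part. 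Both directions ultimately rest on the fact that, for a CDMC, the $\F_T$-conditional law is determined by its c--transition field, which is in turn characterized through the Kolmogorov equations \eqref{eq:INT-trans-prob-backward}--\eqref{eq:INT-trans-prob-forward} by the $\FF$-intensity together with the conditional initial law.
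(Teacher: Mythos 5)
Your proposal is correct and coincides with the paper's intended argument: the paper explicitly omits the proof of this theorem, stating that it is obtained from the proof of Theorem \ref{cor:54} by substituting Theorem \ref{wmc} for Theorem \ref{thm:weak-Mk}, which is precisely your plan, including the two-directional use of Lemma \ref{lem:paskudny} (same intensity plus same $\F_T$-conditional initial law yields same conditional law for sufficiency, and equality of conditional laws forces $\Lambda^k=\Psi^k$ for necessity). Your closing observation that the optional-projection difficulty is entirely absorbed into Theorem \ref{wmc} correctly identifies why the transcription goes through at this level.
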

%\proof
%First we prove sufficiency.  In view of (i) we conclude from Theorem \ref{wmc} that process $X$ is weakly Markovian consistent with respect to $(X^k, \FF) $, and that $X^k$ admits the $\FF$-intensity $\Psi^k$, for each $k=1,2,\ldots,N$.
%This, combined with (ii) implies, in view of Lemma \ref{lem:paskudny}, that $X$ satisfies the weak Markovian consistency property with respect to  $(\mathbb{F},\cY)$.\\
%Now we prove necessity.
%Since $X$ satisfies the weak Markovian consistency property with respect to  $(\mathbb{F},\cY)$, then, clearly, the law of $X_0^k$ given $\mathcal{F}_T$ coincides with the law of $Y^k_0$ given $\mathcal{F}_T$ for all $k=1,2,\ldots,N$. In addition, in view of Theorem \ref{wmc} and Lemma \ref{lem:paskudny}, we conclude that \eqref{vn} is satisfied with
%$\Lambda^k=\Psi^k$, for all $k=1,2,\ldots,N$.
%\finproof
%\endproof

\subsection{Sufficient and necessary condition for weak Markovian consistency II}

Conditions (WM-k) are mathematically interesting, but they are difficult to verify since they entail computations of projections on the filtration $\FF \vee \FF^{X^k}$. Here we will formulate an ``algebraic like'' necessary condition for weak Markovian consistency, which is easier to verify.
%Won do buczka
%Moreover, this necessary condition will be shown to be a sufficient condition for weak Markovian consistency if an additional assumption (cf. \eqref{con:cacy}) is imposed on $X$.

We start with  imposing the following simplifying assumption on process $X$:

\vskip 10pt
%\centerline{\framebox[1.02\width][c]{
%\framebox{\parbox{\dimexpr\linewidth-2\fboxsep-2\fboxrule}{\itshape%
%{
\begin{center}
\fbox{\begin{varwidth}{\dimexpr\textwidth-2\fboxsep-2\fboxrule\relax}
{\sf Assumption (B):}
For each $k\in \{1,2,\ldots,N\}$ it holds that~
$$
\P \left(X^k_t=x^k \big \vert {\mathcal F}_t \right) > 0, \quad {dt} \otimes
d\mathbb{P} \text{-a.e.}, \quad \forall x^k\in S_k.
$$
\end{varwidth}}
\end{center}
%}
%}
%}
%\vskip 10 pt
%
%for each $k\in \{1,2,\ldots,N\}$
%\be\label{eq:access}
%\P \left(X^k_t=x^k \big \vert {\mathcal F}_t \right) > 0, \quad {\textcolor[rgb]{1.00,0.00,0.50}{dt} \otimes
%d\mathbb{P} \text{-a.e.} }, \quad \forall x^k\in S_k.
%\ee
Clearly, this assumption imposes constraints on the initial distribution of the chain, as well as constraints on the structure of the intensity process of $X$. However, it allows to simplify and to streamline the discussion below. The general case can be dealt with in a similar way, with special attention paid to sets of $\omega$-s for which $\P \left(X^k_t=x^k |{\mathcal F}_t \right)(\omega) = 0.$

Before we proceed we observe that Assumption (B) implies that
\[\P \left(X^k_t=x^k  \right) > 0, \quad {dt  \text{-a.e.} }, \quad \forall x^k\in S.\]
We will also need a simple technical result regarding events $B(t,k,x^k)$ and $C(t,k,x^k)$ defined, for every $\tT$,  $x^k\in S_k$ and $k\in \{1,2,\ldots,N\}$ as
\[
B(t,k,x^k) = \Set{ \omega: X^k_t(\omega) = x^k} , \quad
C(t,k,x^k)=  \Set{ \omega :  \P \left(X^k_t=x^k |{\mathcal F}_t \right)(\omega) > 0 }.
\]
We claim that (we write $B$ and $C$ in place of  $B(t,k,x)$ and $C(t,k,x)$ to shorten the formulae)
\[
\P(B \cap C ) =  \P(B )  > 0.
\]
Indeed,
\[
\P(B \cap C )  = \E \left( \I_{B} \I_{C} \right)
=
\E \left( \E \left( \I_{B} | \F_t \right)  \I_{C} \right)
%=
%\E \left( \P \left( B | \F_t \right)  \I_{C} \right)\]\[
=
\E \left( \P \left( B | \F_t \right)  \I_\set{\P \left( B | \F_t \right) > 0 } \right)
=
\P \left( B \right)  .
\]
We are now ready to state the main result in this section. We recall that $\Lambda$ is an $\FF$-intensity of $X$.
\begin{proposition}
Assume that $X$ satisfies Assumptions (A) and (B). Fix $k \in \set{1, \ldots, N}$. Suppose that $X$ is weakly Markovian consistent relative to $(X^k, \mathbb{F})$. Then,
the $\FF$-intensity $\Lambda^k$ of $X^k$ defined by \eqref{eq:F-int-lak} satisfies
%, for every $x^k\in S_k$ and for \textcolor[rgb]{0.98,0.00,0.00}{$\P$ almost every} $\omega \in B(t,k,x^k)\cap C(t,k,x^k)$
\begin{equation}
\begin{aligned}\lab{vn-2}
 \lambda^{k;x^ky^k}_t(\omega)
&
\!=\! \sum_{\substack{x^n,y^n\in {S}_n \\n=1,2,\ldots,N, n \neq k }}  & \!\!\!\!\lambda^{(x^1, \ldots, x^N)(y^1, \ldots, y^N)}_t(\omega)
\frac{\P \left(X^1_t=x^1, \ldots, X^N_t=x^N|{\mathcal F}_t\right)(\omega)  }{ \P \left(X^k_t=x^k |{\mathcal F}_t \right) (\omega) },
%nonumber
\\
 &&\quad \forall x^k,y^k\!\in\! {S}_k,\ y^k \neq x^k\ \textrm{and} \ \forall \omega \in B(t,k,x^k)\cap C(t,k,x^k),
\end{aligned}
\end{equation}
for almost every $\tT$.
%
%$\lambda^{k;x^kx^k}$ given by
%\[
%    \lambda^{k;x^kx^k}_t = - \sum_{y^k \in {S}_k, y^k \neq x^k } \lambda^{k;x^ky^k}_t,
%    \qquad
%    \forall x^k \in {S}_k
%    .
%\]
%
%
%
%for some  $\FF$-adapted processes $\lambda^{k;x^ky^k}.$
\end{proposition}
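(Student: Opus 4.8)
The plan is to invoke Theorem~\ref{wmc} to turn weak Markovian consistency relative to $(X^k,\FF)$ into Condition (WM-k), i.e.\ into the identity \eqref{vn} holding for the processes $\lambda^{k;x^ky^k}$ that make up the $\FF$-intensity $\Lambda^k$ from \eqref{eq:F-int-lak}, and then to eliminate the awkward $\F_t\vee\F^{X^k}_t$-projection occurring there by passing to the coarser $\F_t$-conditional expectation. Fix $x^k,y^k\in S_k$ with $x^k\ne y^k$, abbreviate $B=B(t,k,x^k)=\{X^k_t=x^k\}$, and for each choice of the remaining coordinates write $A=\{X^n_t=x^n,\ n\ne k\}$, so that $\I_B\I_A=\I_{\{X_t=(x^1,\dots,x^N)\}}$. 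Since $\I_B$ is $\sigma(X^k_t)\subseteq\F^{X^k}_t$-measurable whereas each $\lambda^{(x^1,\dots,x^N)(y^1,\dots,y^N)}_t$ is $\F_t$-measurable, I would pull $\I_B$ inside the inner conditional expectation in \eqref{vn}, rewriting its left-hand side as
\[
\sum_{\substack{x^n,y^n\in S_n\\ n\ne k}}\lambda^{(x^1,\dots,x^N)(y^1,\dots,y^N)}_t\,\EP\big(\I_{\{X_t=(x^1,\dots,x^N)\}}\mid\F_t\vee\F^{X^k}_t\big),
\]
so that \eqref{vn} now states that this sum equals $\I_B\,\lambda^{k;x^ky^k}_t$.

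Next I would take $\EP(\,\cdot\mid\F_t)$ of both sides of the resulting identity, which is permissible because it holds $dt\otimes d\P$-a.e. As each $\lambda^{(x^1,\dots,x^N)(y^1,\dots,y^N)}_t$ and $\lambda^{k;x^ky^k}_t$ is $\F_t$-measurable and $\F_t\subseteq\F_t\vee\F^{X^k}_t$, the tower property collapses the iterated conditioning on the left to $\P(X_t=(x^1,\dots,x^N)\mid\F_t)$, while on the right $\EP(\I_B\mid\F_t)=\P(X^k_t=x^k\mid\F_t)$. This produces the $\F_t$-measurable identity
\[
\sum_{\substack{x^n,y^n\in S_n\\ n\ne k}}\lambda^{(x^1,\dots,x^N)(y^1,\dots,y^N)}_t\,\P\big(X_t=(x^1,\dots,x^N)\mid\F_t\big)=\lambda^{k;x^ky^k}_t\,\P\big(X^k_t=x^k\mid\F_t\big),
\]
valid $\P$-a.s.\ for almost every $\tT$. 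On the event $C(t,k,x^k)=\{\P(X^k_t=x^k\mid\F_t)>0\}$ I would then divide by the strictly positive factor $\P(X^k_t=x^k\mid\F_t)$, which yields exactly \eqref{vn-2}; restricting further to $B(t,k,x^k)\cap C(t,k,x^k)\subseteq C(t,k,x^k)$ is harmless, and by the computation $\P(B\cap C)=\P(B)$ recorded just above (together with Assumption (B)) this set differs from $B(t,k,x^k)$ only by a $\P$-null set.

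I expect no genuine obstacle here; the difficulties are purely bookkeeping. The first is the absorption of $\I_B$ into the $\F_t\vee\F^{X^k}_t$-conditional expectation, which relies precisely on $\I_B$ being $\F^{X^k}_t$-measurable while the intensities are $\F_t$-measurable. The second is the transition from the $dt\otimes d\P$-a.e.\ identity to the pointwise-in-$\omega$ conclusion, which I would state as holding for almost every $t$ and $\P$-almost every $\omega\in B(t,k,x^k)\cap C(t,k,x^k)$; the event $C(t,k,x^k)$ enters exactly because it licenses the division by $\P(X^k_t=x^k\mid\F_t)$. Assumption (A) is used only through Theorem~\ref{wmc} and the underlying CDMC structure that makes $\Lambda^k$ a bona fide $\FF$-intensity of $X^k$, whereas Assumption (B) ensures that $C(t,k,x^k)$ has full measure, so that \eqref{vn-2} in fact holds essentially everywhere on $B(t,k,x^k)$.
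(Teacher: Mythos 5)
Your proof is correct, and it reaches \eqref{vn-2} by a route that differs from the paper's in one implementation detail. Both arguments begin identically: Theorem \ref{wmc} converts weak Markovian consistency relative to $(X^k,\FF)$ into Condition (WM-k), i.e.\ identity \eqref{vn} for the entries of $\Lambda^k$. The paper then takes $\EP(\,\cdot \mid \F_t \vee \sigma(X^k_t))$ of \eqref{vn} and evaluates the resulting conditional expectation pointwise on $B(t,k,x^k)\cap C(t,k,x^k)$ by citing the Bayes-type formula of \cite[Lemma 3]{JakNie2007}, which is exactly where the quotient $\P(X_t = (x^1,\dots,x^N)\mid\F_t)/\P(X^k_t = x^k\mid\F_t)$ appears. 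You instead condition one level lower: after absorbing $\I_{\{X^k_t = x^k\}}$ into the inner $\F_t\vee\F^{X^k}_t$-conditional expectation (legitimate, since the indicator is $\F^{X^k}_t$-measurable and the intensity entries are $\F_t$-measurable), you apply $\EP(\,\cdot\mid\F_t)$ and perform the Bayes division by hand on $C(t,k,x^k)$. This buys two small things: you avoid the external lemma entirely, and you obtain the identity $\P$-a.s.\ on all of $C(t,k,x^k)$ (hence, under Assumption (B), a.s.\ for a.e.\ $t$), which is marginally stronger than the stated restriction to $B(t,k,x^k)\cap C(t,k,x^k)$; as you note, $\P(B\cap C)=\P(B)$ makes the two formulations agree up to null sets. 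One point you leave implicit and should record: the tower-property step $\EP\bigl(\lambda_t\,\EP(\I_{\{X_t=x\}}\mid\F_t\vee\F^{X^k}_t)\mid\F_t\bigr)=\lambda_t\,\P(X_t=x\mid\F_t)$ involves possibly unbounded integrands, but every intensity entry occurring in the sum is off-diagonal in the full chain (because $x^k\neq y^k$), hence nonnegative by \eqref{eq:int-cond}, so the extended (nonnegative) conditional expectation and tower property apply without any integrability hypothesis.
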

\begin{proof}
Since weak Markovian consistency relative to $(X^k, \mathbb{F})$ holds, then $\Lambda^k$  satisfies \eqref{vn}.
Taking conditional expectations with respect to $\F_t \vee \sigma(X^k_t)$ of \eqref{vn} yields
\begin{align*}
	&\I_{\{X^k_{t}=x^k\}}	\lambda^{k;x^ky^k}_t
= \mathbb{E} \left( \I_{\{X^k_{t}=x^k\}}\lambda^{k;x^ky^k}_t | \mathcal{F}_t \vee \sigma(X^k_t) \right)
\\ &=
\bE \bigg(
\I_{\{X^k_{t}=x^k\!\}}\!\!\!\!\sum_{\substack{x^n,y^n\in {S}_n \\n=1,2,\ldots,N, n \neq k }}  \!\!\!\!\lambda^{(x^1, \ldots, x^N)(y^1, \ldots, y^N)}_t \\
& \quad \quad \quad \quad \times \bE \left(\I_{\{X^1_t=x^1, \ldots, X^{k-1}_t=x^{k-1},X^{k+1}_t=x^{k+1},\ldots, X^N_t=x^N\}}|{\mathcal F}_t \vee {\mathcal F}^{X^k}_t\!\bigg)
\bigg| {\mathcal F}_t \vee \sigma(X^k_t)
\right) \\
&\quad =
\I_{\{X^k_{t}=x^k\!\}}\!\!\!\!\sum_{\substack{x^n,y^n\in {S}_n \\n=1,2,\ldots,N, n \neq k }}  \!\!\!\!\lambda^{(x^1, \ldots, x^N)(y^1, \ldots, y^N)}_t
\bE \left(\I_{\{X^1_t=x^1, \ldots, X^{k-1}_t=x^{k-1},X^{k+1}_t=x^{k+1},\ldots, X^N_t=x^N\}}|{\mathcal F}_t \vee \sigma(X^k_t)\!\right).
	\end{align*}
%Since $\I_{\{X^k_{t}=x^k\}}\lambda^{k;x^ky^k}_t $ is measurable with respect to  $\mathcal{F}_t \vee \sigma(X_t)$, then, applying the tower property of conditional expectation applied to right  hand side of \eqref{vn} we have
%\[
%	\mathbb{E} \left( \I_{\{X^k_{t}=x^k\}}\lambda^{k;x^ky^k}_t | \mathcal{F}_t \vee \sigma(X_t) \right)
%=
%\I_{\{X^k_{t}=x^k\}}	\lambda^{k;x^ky^k}_t
%\]
%Thus %, in view of \eqref{vn}, we have  that
%\begin{align*}
%&\I_{\{X^k_{t}=x^k\}}	\lambda^{k;x^ky^k}_t \\
%&=\I_{\{X^k_{t}=x^k\!\}}
%\!\!\!\!\sum_{\substack{x^n,y^n\in {S}_n \\n=1,2,\ldots,N, n \neq k }}  &\!\!\!\!\lambda^{(x^1, \ldots, x^N)(y^1, \ldots, y^N)}_t
%\bE \left(\I_{\{X^1_t=x^1, \ldots, X^{k-1}_t=x^{k-1},X^{k+1}_t=x^{k+1},\ldots, X^N_t=x^N\}}|{\mathcal F}_t \vee \sigma(X^k_t)\!\right)
%\\
%&
%&dt \otimes d \P-a.e, \quad \forall y^k\!\in\! {S}_k,\ y^k \neq x^k.
%\end{align*}
Now, let us take an arbitrary  $\omega \in B(t,k,x^k)\cap C(t,k,x^k)$.
By Assumption (B), using Jakubowski and Niew\k{e}g\l{}owski  \cite[Lemma 3]{JakNie2007}, we have
\begin{align*}
&	\lambda^{k;x^ky^k}_t(\omega) \\
&=
\sum_{\substack{x^n,y^n\in {S}_n \\n=1,2,\ldots,N, n \neq k }}  \!\!\!\!\lambda^{(x^1, \ldots, x^N)(y^1, \ldots, y^N)}_t(\omega)
\bE \left(\I_{\{X^1_t=x^1, \ldots, X^{k-1}_t=x^{k-1},X^{k+1}_t=x^{k+1},\ldots, X^N_t=x^N\}}|{\mathcal F}_t \vee \sigma(X^k_t)\!\right)(\omega)
\\
&=
\sum_{\substack{x^n,y^n\in {S}_n \\n=1,2,\ldots,N, n \neq k }}  \!\!\!\!\lambda^{(x^1, \ldots, x^N)(y^1, \ldots, y^N)}_t(\omega)
\frac{\P \left(X^1_t=x^1, \ldots, X^N_t=x^N|{\mathcal F}_t \right) (\omega)}{ \P \left(X^k_t=x^k |{\mathcal F}_t \right)(\omega) },
\end{align*}
which shows that condition \eqref{vn-2} is necessary for the weak Markovian consistency of $X$ relative to $(X^k, \FF)$.
\finproof
 \end{proof}
The next proposition can be  used in construction of weak CMC copulae.
 \begin{proposition}\label{prop:wmcnec}
Let $\cY=\set{Y^1, \ldots, Y^N}$ be a family of processes such that each $Y^k $ is an $(\mathbb{F},\mathbb{F}^{Y^k})$-CDMC,  with values in $S_k$, and with an $\FF$-intensity $\Psi^k_t=[\psi^{k;x^ky^k}_t]_{x^k,y^k \in S_k}$. Assume that $X$ satisfies Assumptions (A)  and (B),  and let $\Lambda$ be a version of its $\FF$-intensity.
In addition, suppose that $X$ is weakly Markovian consistent relative to $(\mathbb{F}, \cY)$. Then,
\begin{itemize}
\item[(i)] It holds that
%the $\FF$-intensity $\Lambda^k$ of $X^k$ defined by \eqref{eq:F-int-lak} satisfies
%, for every $x^k\in S_k$ and for \textcolor[rgb]{0.98,0.00,0.00}{$\P$ almost every} $\omega \in B(t,k,x^k)\cap C(t,k,x^k)$
\begin{equation}
\begin{aligned}\lab{vn-2a}
 \psi^{k;x^ky^k}_t(\omega)
&
\!=\! \sum_{\substack{x^n,y^n\in {S}_n \\n=1,2,\ldots,N, n \neq k }}  & \!\!\!\!\lambda^{(x^1, \ldots, x^N)(y^1, \ldots, y^N)}_t(\omega)
\frac{\P \left(X^1_t=x^1, \ldots, X^N_t=x^N|{\mathcal F}_t\right)(\omega)  }{ \P \left(X^k_t=x^k |{\mathcal F}_t \right) (\omega) },
%nonumber
\\
 &&\quad \forall x^k,y^k\!\in\! {S}_k,\ y^k \neq x^k\ \textrm{and} \ \forall \omega \in B(t,k,x^k)\cap C(t,k,x^k),
\end{aligned}
\end{equation}
for almost every $\tT$.

\item[(ii)] The law of $X_0^k$ given $\mathcal{F}_T$ coincides with the law of $Y^k_0$ given $\mathcal{F}_T$.
\end{itemize}
\end{proposition}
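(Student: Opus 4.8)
The plan is to obtain both assertions directly from Theorem \ref{wcor:54} together with the computation carried out in the preceding proposition (the one establishing \eqref{vn-2}). Recall that, by definition, weak Markovian consistency of $X$ relative to $(\FF,\cY)$ is the conjunction, over all $k$, of weak consistency relative to $(X^k,\FF)$ and the coincidence of the $\F_T$-conditional laws of $X^k$ and $Y^k$. Part (ii) is then immediate: specialising the equality of these conditional laws to time $0$ gives that the law of $X^k_0$ given $\F_T$ agrees with the law of $Y^k_0$ given $\F_T$; equivalently, this is exactly Theorem \ref{wcor:54}(ii).

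For part (i) the decisive point is furnished by Theorem \ref{wcor:54}(i): under our hypothesis, Condition (WM-k) holds with $\Psi^k$ in place of $\Lambda^k$, i.e. the $\FF$-adapted processes $\psi^{k;x^ky^k}$ themselves satisfy \eqref{vn}. By Theorem \ref{wmc} this says precisely that $\Psi^k$ is (a version of) the $\FF$-intensity of $X^k$. I would then repeat the argument of the preceding proposition, but carrying $\psi^{k;x^ky^k}$ through in the role of $\lambda^{k;x^ky^k}$. Concretely, starting from \eqref{vn} written with $\psi^{k;x^ky^k}$, take the conditional expectation given $\F_t\vee\sigma(X^k_t)$ of both sides. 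On the right this leaves $\I_{\{X^k_t=x^k\}}\psi^{k;x^ky^k}_t$ unchanged, since both factors are $\F_t\vee\sigma(X^k_t)$-measurable. On the left, the prefactor $\I_{\{X^k_t=x^k\}}\sum\lambda^{(x^1,\ldots,x^N)(y^1,\ldots,y^N)}_t$ is likewise $\F_t\vee\sigma(X^k_t)$-measurable and may be pulled out, while the inner conditional expectation given $\F_t\vee\F^{X^k}_t$ collapses, by the tower property and $\sigma(X^k_t)\subseteq\F^{X^k}_t$, to a single conditional expectation given $\F_t\vee\sigma(X^k_t)$.

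The final step is to evaluate $\bE(\I_{\{X^1_t=x^1,\ldots,X^N_t=x^N\}}\mid\F_t\vee\sigma(X^k_t))$ pointwise on $B(t,k,x^k)\cap C(t,k,x^k)$. Here Assumption (B) ensures $\P(X^k_t=x^k\mid\F_t)>0$ for $dt\otimes d\P$-almost all $(t,\omega)$, so that Lemma 3 of Jakubowski and Niew\k{e}g\l{}owski \cite{JakNie2007} applies and identifies this conditional expectation with the ratio $\P(X^1_t=x^1,\ldots,X^N_t=x^N\mid\F_t)/\P(X^k_t=x^k\mid\F_t)$. Substituting this ratio yields \eqref{vn-2a} on $B(t,k,x^k)\cap C(t,k,x^k)$ for almost every $t$, proving (i).

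I expect no serious difficulty here, since the argument is a faithful transcription of the proof of the preceding proposition; the points requiring care are mainly organisational. The one genuinely new ingredient is the identification, via Theorem \ref{wcor:54}(i), of $\psi^{k;x^ky^k}$ as a version of the intensity of $X^k$, which is exactly what puts $\psi^{k;x^ky^k}$ (rather than $\lambda^{k;x^ky^k}$) on the left-hand side of \eqref{vn-2a}. The remaining caution is the null-set bookkeeping: one restricts to $\omega\in B(t,k,x^k)\cap C(t,k,x^k)$ precisely so that the denominator in \eqref{vn-2a} is strictly positive and the division is legitimate, and all equalities are understood $dt\otimes d\P$-almost everywhere.
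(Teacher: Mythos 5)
Your proof is correct, but it takes a slightly different route than the paper's, which is worth recording. The paper disposes of (i) in two lines: weak Markovian consistency relative to $(\FF,\cY)$ implies weak consistency relative to each $(X^k,\FF)$, so the preceding proposition already yields \eqref{vn-2} with the $\FF$-intensity $\Lambda^k$ of $X^k$ on the left-hand side; then, since the $\F_T$-conditional laws of $X^k$ and $Y^k$ coincide, Lemma \ref{lem:paskudny} identifies $\Lambda^k=\Psi^k$ $du\otimes d\P$-a.e., which converts \eqref{vn-2} into \eqref{vn-2a} without any further computation. You instead invoke Theorem \ref{wcor:54}(i) to obtain Condition (WM-k) with $\psi^{k;x^ky^k}$ standing directly in \eqref{vn}, and then re-run the projection argument of the preceding proposition (conditioning on $\F_t\vee\sigma(X^k_t)$, the tower property with $\sigma(X^k_t)\subseteq\F^{X^k}_t$, and Lemma 3 of \cite{JakNie2007} under Assumption (B)) with $\psi^{k;x^ky^k}$ carried through in place of $\lambda^{k;x^ky^k}$. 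Both arguments are sound and rest on the same foundations --- indeed the paper's (omitted) proof of Theorem \ref{wcor:54} itself goes through Lemma \ref{lem:paskudny} --- so the difference is essentially where the substitution $\Lambda^k=\Psi^k$ happens: the paper substitutes \emph{after} the ratio formula \eqref{vn-2} has been established, which is the more economical route; you substitute \emph{before}, at the level of \eqref{vn}, at the cost of repeating the conditional-expectation computation but with the benefit that the derivation of \eqref{vn-2a} is self-contained modulo Theorem \ref{wcor:54}. Your handling of (ii) and your care with the positivity set $B(t,k,x^k)\cap C(t,k,x^k)$, which legitimises the division in \eqref{vn-2a}, match the paper's intent exactly.
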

\begin{proof}
Since $X$ is weakly Markovian consistent relative to $(\mathbb{F}, \cY)$, then, $X$ is weakly Markovian consistent relative to $(X^k,\mathbb{F})$ for each $k.$ Thus, in view of \eqref{vn-2} and Lemma \ref{lem:paskudny} we conclude that \eqref{vn-2a} holds. This proves (i). The conclusion (ii) is clear by the weak Markovian consistency of $X$ relative to $(\mathbb{F}, \cY)$.
\finproof
\end{proof}

%
%$\lambda^{k;x^kx^k}$ given by
%\[
%    \lambda^{k;x^kx^k}_t = - \sum_{y^k \in {S}_k, y^k \neq x^k } \lambda^{k;x^ky^k}_t,
%    \qquad
%    \forall x^k \in {S}_k
%    .
%\]
%
%
%
%for some  $\FF$-adapted processes $\lambda^{k;x^ky^k}.$

\begin{remark}\label{hftb}
 Even though the above proposition gives a necessary, rather than a sufficient, condition for the weak Markovian consistency of $X$ relative to $(\mathbb{F}, \cY)$, it will be skillfully used in construction of weak CMC copulae, in Section \ref{sec:wcmc}. In the present time we do not have a workable sufficient condition for  the weak Markovian consistency of $X$ relative to $(\mathbb{F}, \cY)$ to hold. Thus, for the time being, our strategy for constructing CMC copulae will be to use the necessary condition \eqref{vn-2a} to construct process $X$ which is a candidate for a CMC copula, and then to verify that this process indeed furnishes a weak CMC copula. We refer to Section   \ref{sec:wcmc} for details.
 \end{remark}

\subsection{When does weak Markov consistency imply strong Markov consistency?}

It is clear that the strong Markovian consistency for $X$ implies the weak Markovian consistency for $X$. As it will be seen in Section \ref{ex:weaknotstron}, process $X$ may be weakly Markovian consistent relative to $(X^k, \mathbb{F})$, but may fail to satisfy the  strong Markovian consistency condition relative to $(X^k, \mathbb{F})$. The following result provides sufficient conditions under which the weak Markovian consistency of $X$ relative to $(X^k, \mathbb{F})$ implies the strong Markovian consistency relative to $(X^k, \mathbb{F})$ for process $X$.
\bt\label{thm:weak->strong-cons}
Assume that $X$ satisfies the weak Markovian consistency condition relative to  $(X^k, \mathbb{F})$.
If  $\FF \vee \FF^{X^k}$ is $\P$-immersed in $\FF \vee \FF^{X}$, then $X$ satisfies the strong Markovian consistency condition relative to  $(X^k, \mathbb{F})$.
\et
\begin{proof}
Suppose that $\FF \vee \FF^{X^k}$ is immersed in
$\FF \vee \FF^{X}$, and let $X^k$ be an $(\FF, \FF^{X^k})$-CMC.
For arbitrary $x^k_1,  \ldots, x^k_m\in S_k $ and $0 \leq t_1 \leq \ldots \leq t_m \leq T,$ let us define the set $A$
\[
A:= \set{ X^k_{t_m } =x^k_{m},  \ldots, X^k_{t_1 } =x^k_{1} }.
\]
Since  $X^k$ is an $(\FF, \FF^{X^k})$-CMC,  we have for $s\leq t_1$
\[
	\P( A | \F_s \vee \sigma(X^k_s) )
=
	\P( A | \F_s \vee \F^{X^k}_s )
=	\P( A | \F_s \vee \F^X_s ),
\]
where in the second equality we have used immersion of  $\FF \vee \FF^{X^k}$ in
$\FF \vee \FF^{X}$ (cf. Section 6.1.1 in Bielecki and Rutkowski \cite{BieRut2001}). Thus $X^k$ is an $(\FF, \FF^{X})$-CMC. \finproof
\end{proof}
\begin{remark}
We note that the above theorem states only a sufficient condition for the weak Markovian consistency of $X$ to imply the strong Markovian consistency of $X$ (relative to  $(X^k, \mathbb{F}))$. As it is shown in \cite[Theorem 1.17]{BieJakNie2013}, in case of  trivial filtration $\FF$, the condition that $\FF^{X^k}$ is immersed in $\FF^{X}$ is both sufficient and necessary for weak Markovian consistency of $X$ to imply the strong Markovian consistency of $X$ (relative to  $X^k$).
\end{remark}

\section{CMC copulae }\label{CMCC}

As mentioned in the Introduction, the objective of the theory and practice of Markov copulae for classical Markov chains was to construct a non-trivial family of multivariate Markov chains such that components of each chain in the family are Markov chains (in some relevant filtrations) with given laws. Here, our goal is to extend the theory of Markov copulae from the universe of classical (finite) Markov chains to the universe of (finite) conditional Markov chains. Accordingly, we now use the term CMC copulae. As it turns out such extension is not a trivial one. But, it is quite important both from the mathematical point of view and from the practical point of view.

We will first discuss the so called strong CMC copulae, and then we will study the concept of the weak CMC copulae. In practice, an important role is played by the so called weak only CMC copulae, that is weak CMC copulae that are not strong CMC copulae (see discussion in \cite[Remark 2.3]{BieJakNie2013}). An example of such CMC copula will be given in Section \ref{ex:weaknotstron}.

We recall that in this paper the state space $S$ of process $X=(X^1,\ldots,X^N)$ is given as the Cartesian product $S_1\times S_2\times \ldots \times S_N.$

\subsection{ Strong  CMC copulae }\label{SCMCC}

\bd Let $\cY=\set{Y^1, \ldots, Y^N}$ be a family of processes, defined on some underlying probability space $(\Omega,\cA,\Q)$, such that each $Y^k $ is an $(\mathbb{F},\mathbb{F}^{Y^k})$-CMC with values in $S_k$. A \textit{strong CMC copula} between processes $Y^1, \ldots, Y^N$  is any multivariate process  $X=(X^1,\ldots,X^N)$, given on $(\Omega,\cA)$ endowed with some probability measure  $\P$, such that $X$ is an $(\FF,\FF^X)$--CMC, and such that it satisfies the strong Markovian consistency property with respect to  $(\mathbb{F},\cY)$.
\ed

The methodology developed in \cite{BieJakNie2014a}  allows us to construct strong CMC copulae between processes $Y^1, \ldots, Y^N$, that are defined on some underlying probability space $(\Omega,\cA,\Q)$ endowed with a reference filtration $\FF$, and are such that each $Y^k$ is $(\mathbb{F},\mathbb{F}^{Y^k})$-CDMC  with $\mathbb{F}$--intensity, say, $\Psi^k=[\psi^{k;x^ky^k}]_{x^k,y^k\in S_k}$. The additional feature of our construction is that, typically, the constructed CMC copulae $X$  are also $(\mathbb{F},\mathbb{F}^{X})$-DSMC.

In view of \cite[Theorem 3.4]{BieJakNie2014a}, Proposition \ref{cor:56} and Lemma \ref{lem:paskudny} a natural starting point for constructing a strong copula between $Y^1, \ldots, Y^N$ is to determine  a system of stochastic processes
$[\lambda^{xy}]_{x,y \in S}$ and an $S$-valued random variable $\xi=(\xi^1, \ldots, \xi^N)$ on $(\Omega,\cA)$,  such that they satisfy the following conditions:
\begin{description}
\item[(CMC-1)]
\begin{align*}
%\label{eq:copula-LSEa}
\psi^{k;x^ky^k}_t=\sum_{\substack{y^n\in S_n,\\ n=1,2,\ldots,N, n \neq k}}\lambda^{(x^1,\ldots,x^k,\ldots,x^N)(y^1,\ldots,y^k,\ldots,y^N)}_t,
\quad  \begin{array}{l}
x^n \in S_n, n=1,\ldots,N, \, \\
y^k \in S_k, y^k\neq x^k, \\
 k=1, \ldots, N, \tT.
\end{array}
%\
%\nonumber &
\end{align*}
%in unknowns $\lambda^{(x^1,\ldots,x^k,\ldots,x^N)(y^1,\ldots,y^k,\ldots,y^N)}_t$
%	\item[(CMC-2)] \[ \lambda^{xx}_t=- \sum_{z \in S :\, z \ne x} \lambda^{xz}_t,\quad x\in S,\ \tT,\]
%\begin{color}{red}
%\item[(CMC-3)]
%The matrix process $\Lambda_t=[\lambda^{xy}_t]_{x,y  \in S} $ correctly defines the $\FF$-intensity of a stochastic process, say $X$, with values in $S\, ,$ which is  a $(\mathbb{F},  \mathbb{F}^X)$-CMC and a $(\mathbb{F},  \mathbb{F}^X)$-DSMC.\end{color}
\item[(CMC-2)]
The matrix process $\Lambda_t=[\lambda^{xy}_t]_{x,y  \in S} $ satisfies canonical conditions relative to the pair $(S,\FF)$ (cf. \cite[Definition 3.3]{BieJakNie2014a}%\ref{cancon}
).
\item[(CMC-3)]
\[
    {\bQ}( \xi =y | \F_T ) = {\bQ}( \xi =y | \F_0 ), \quad \forall y \in S.
\]
\item[(CMC-4)]
\[
    {\bQ}( \xi^k =y^k | \F_T ) = \Q( Y^k_0 =y^k | \F_T ), \quad \forall y^k \in S_k, k =1, \ldots, N.
\]
\end{description}

\noindent
We will call any pair $(\Lambda,\xi)$ satisfying conditions (CMC-1)--(CMC-4) \emph{strong CMC pre-copula} between processes $Y^1, \ldots, Y^N$. Given a strong CMC pre-copula between processes $Y^1, \ldots, Y^N$ we can construct on  $(\Omega,\cA)$ a probability measure $\P$ and a process $X$,  using  
\cite[Theorem 3.4]{BieJakNie2014a} and starting from measure $\mathbb{Q}$ as above\footnote{It is always tacitly assumed that the probability space $(\Omega,\cA,\Q)$ is sufficiently rich so to support all stochastic processes and random variables that are considered throughout.}, such that $X$ is an $(\FF,\FF^X)$--CMC under $\bP$, and which satisfies the strong Markovian consistency property with respect to  $(\mathbb{F},\cY)$, in view of Proposition \ref{cor:56} and Lemma \ref{lem:paskudny}.
 Thus, it is a strong CMC copula between processes $Y^1, \ldots, Y^N$.
\begin{remark}\label{rem:PQ}
It follows from (3.9) and (3.10) in  \cite{BieJakNie2014a} that  for $\bP$ constructed as above we have
\begin{align*}
 &   {\bP}( \xi =y | \F_T ) = {\bP}( \xi =y | \F_0 ), \quad \forall y \in S. \\
&
    {\bP}( \xi^k =y^k | \F_T ) = \Q( Y^k_0 =y^k | \F_T ), \quad \forall y^k \in S_k, k =1, \ldots, N.
\end{align*}
\end{remark}
%Now, if a $\F_T$-conditional distribution of $X^k_0$ coincides with $\F_T$-conditional distribution of $Y^k_0$, for $k \in 1, \ldots, N$, then $X$ is strongly Markovian consistent with respect to $( \FF, Y )$ by \cite[Proposition 4.5]{BieJakNie2014a}%\ref{wn2.4}
%, and hence $X$ is a strong CMC copula between $Y^1, \ldots, Y^N$.
\brem\label{fikusna} (i) Note that in the definition of strong CMC copula it is required that $\F_T$-conditional distribution of $X^k_0$ coincides with $\F_T$-conditional distribution of $Y^k_0$, for $k \in 1, \ldots, N$, but, the $\F_T$-conditional distribution of the multivariate random variable $X_0=(X^1_0,\ldots,X^N_0)$ can be arbitrary. Thus, in principle, a  {strong CMC copula} between processes $Y^1, \ldots, Y^N$ can be constructed with help of a {strong CMC pre-copula} between processes $Y^1, \ldots, Y^N$, as well as a copula between the $\F_T$-conditional distributions of $X^k_0$s, for $k \in 1, \ldots, N$. For instance, in Example \ref{CIC} below, we take the components $X^1,\ldots,X^N$ are {conditionally independent given ${\mathcal{F}}_T$}.\\
(ii) In general, there exist numerous systems of stochastic processes that satisfy conditions (CMC-1) and (CMC-2), so that there exist numerous strong pre-copulae between conditional Markov chains $Y^1$, \ldots, $Y^N$, and, consequently, there exists numerous strong CMC copulae between conditional Markov chains $Y^1$, \ldots, $Y^N$. This is an important feature in financial applications, as it allows to calibrate a CMC model to both marginal data and to the basket data.
\erem

%Won do buczka:
%\mj{Czy kazda silna kopulka da sie tak zrobic ? Potrzebne twierdzenie ktore trzeba by udowodnic: jezeli X jest \emph{strongly Markovian consistent}  wzgledem $(X^k,\FF)$ (lub $(\FF, Y)$) z intensywnoscia $\Lambda$ to istnieje $\FF$ intensywnosc $\Gamma$ ktora spelnia (ASM-k).
%\\
%Problem tzw. "proper $\FF$-intensity" - jedyna wyznaczajaca rozklad intensywnosc ! TO EWENTUALNIE  DO NASTEPNEJ PRACY !! \tr{To znaczy, ze tu nie jest nam potrzebne, tak?}
% }

Below we provide examples of strong CMC copulae. The first example, dealing with conditionally independent univariate CMCs, does not really address the issue of modeling dependence between components of a multivariate CMC. Nevertheless, on one hand, this example may have non-trivial practical applications in insurance, and, on the other hand, it  is a non-trivial example from the mathematical point of view.
Moreover, this example provides a sort of a reality check for the theory of strong CMC copulae: it would be not good for the theory if a multivariate conditional Markov chain $X=(X^1,\ldots,X^N)$ with conditionally independent components were not a strong CMC  copula.

\subsection{Examples}\label{examples}

\subsubsection{Conditionally independent strong CMC copula}\label{CIC} %\textbf{()}

 This example generalizes the independent Markov copula example presented in Section 2.1 in \cite{BieJakNie2013}.

Let $Y^1$, \ldots, $Y^N$ be processes such that each $Y^k $ is an $(\mathbb{F},\mathbb{F}^{Y^k})$-CDMC with values in $S_k$, and with $\FF$--intensity $\Psi^k_t=[\psi^{k;x^ky^k}_t]_{x^k,y^k \in S_k}$. Assume that for each $k$ the process $\Psi^k$ satisfies canonical conditions relative to the pair $(S_k,\FF)$ . Additionally assume that
\be\label{eq:cosik-init-y}
\Q(Y^k_0 =x^k | \F_T ) =\Q(Y^k_0 =x^k | \F_0 ), \quad \forall x^k \in S_k, \ k=1, \ldots, N.
\ee

 Consider a matrix valued random process $\Lambda$  given as the following Kronecker sum
\be\label{eq:LLL}
	\Lambda_t = \sum_{k=1}^N
	I_1 \otimes \ldots {\otimes} I_{k-1} {\otimes} \Psi^k_t {\otimes} I_{k+1} {\otimes} \ldots {\otimes} I_N
	%I^{\set{k}^c} \otimes \Psi^k_t
, \quad \tT,
\ee
%where we use
%\[
%I^{\set{k}^c} \otimes \Psi^k_t :=  I_1 \otimes \ldots {\otimes} I_{k-1} {\otimes} \Psi^k_t {\otimes} I_{k+1} {\otimes} \ldots {\otimes} I_N,
%\]
%and
where $\otimes$ is the Kronecker product (see e.g. Horn and Johnson \cite{HorJoh1994}), and where $I_k$ denotes the identity matrix of dimensions $|S_k| \times |S_k|$.
Moreover, let us take an $S$-valued random variable $\xi=(\xi^1, \ldots , \xi^N )$,
which has $\F_T$-conditionally independent coordinates, that is
\begin{equation}\label{eq:xi-cond-ind}
{\bQ}(\xi^1 =x^1, \ldots, \xi^N = x^N | \F_T )
=
\prod_{i=1}^N \bQ(\xi^i = x^i | \F_T ) , \quad
\forall x=(x^1, \ldots, x^N) \in S.
\end{equation}
Additionally assume that $\F_T$-conditional distributions of coordinates of $\xi$ and $Y_0$ coincide, meaning that
\be\label{eq:cosik-init-xiy}
\bQ(\xi^k =x^k | \F_T ) =\Q(Y^k_0 =x^k | \F_T ), \quad \forall x^k \in S_k, \ k=1, \ldots, N.
\ee
%meaning that $\F_T$-conditional distributions of coordinates of $\xi$ and $Y_0$ coincide.
As it  is shown in Appendix B, Proposition \ref{lem:cudolemma},  $\Lambda$ satisfies conditions (CMC-1) and (CMC-2). Furthermore, by
\eqref{eq:xi-cond-ind} and  \eqref{eq:cosik-init-y}, $\xi$ satisfies (CMC-3) and, by \eqref{eq:cosik-init-xiy}, also (CMC-4).
Thus, $(\Lambda, \xi)$ is a strong CMC pre-copula between conditional Markov chains $Y^1$, \ldots, $Y^N$.
Now, as we said earlier, we can construct, with the help of \cite[Theorem 3.4]{BieJakNie2014a}%\ref{thm:CMC-under-constr}
, a multivariate $(\mathbb{F},\mathbb{F}^{X})$-CDMC (see also \cite[Proposition 4.17]{BieJakNie2014a}%\ref{thm:CMC-DCMC}
), say $X=(X^1,\ldots,X^N)$, with values in $S$, which, in view of Proposition \ref{cor:56} satisfies the strong Markovian consistency property with respect to  $(\mathbb{F},\cY)$.
Therefore, the process $X$ furnishes a {strong CMC copula} between processes $Y^1, \ldots, Y^N$.
%Since $\Lambda$ satisfies Condition (CMC-1), process $X$ furnishes a strong CMC copula between conditional Markov chains $Y^1$, \ldots, $Y^N$.
Finally, Proposition \ref{cor:cond-ind} of Appendix B demonstrates  that components of $X$ are conditionally independent given ${\mathcal{F}}_T$.

It is quite clear from \eqref{eq:LLL} that components $X^i$ of $X$ do not jump simultaneously; this, indeed, is the inherent feature of the conditional independent CMC copula. In the next example we will present a strong CMC copula such that its components  have common jumps.
\subsubsection{Common jump strong CMC copula}\label{silaczek}
%\begin{example}\textbf{(Common jump strong CMC copula)}

Let us consider two processes, $Y^1$ and $Y^2$, such that each $Y^i$
is an $(\FF,\FF^{Y^i})$-CDMC taking values in the state space $\{0,1\}.$
Suppose that their $\FF$-intensities are
\begin{align*}
\Psi^1(t)=
\bordermatrix{ &0 &1 \cr
   0&  -a_t  & a_t \cr
   1&  0 & 0
},\qquad
\Psi^2(t)
=\bordermatrix{ &0 &1 \cr
0 &     -b_t  & b_t \cr
1 &     0 & 0 },
\end{align*}
where $a,b$ are nonnegative $\FF$-progressively measurable stochastic processes which have countably many jumps.
%Solutions to Kolmogorov equations can be obtained easily
%\[
%P^1(t,u) =
%\left(
%  \begin{array}{cc}
%    e^{ - \int_t^u a_s + c_s ds} & 1- e^{ - \int_t^u a_s + c_s ds} \\
%    0 & 1 \\
%  \end{array}
%\right)
%\]
%\[
%P^2(t,u) =
%\left(
%  \begin{array}{cc}
%    e^{ - \int_t^u b_s + c_s ds} & 1- e^{ - \int_t^u b_s + c_s ds} \\
%    0 & 1 \\
%  \end{array}
%\right)
%\]
Moreover assume that
\[
	\Q( Y^1_0 = 0 )= \Q( Y^2_0 = 0 ) = 1.
\]
Next, let $\Lambda$ be a matrix valued process given by
\[
	\Lambda_t =
\bordermatrix{&(0,0)&(0,1)&(1,0)&(1,1)\cr
  (0,0) &  -(a_t + b_t - c_t) & b_t - c_t  & a_t - c_t & c_t \cr
  (0,1) & 0 & -a_t & 0 & a_t  \cr
  (1,0) & 0 & 0 & -b_t & b_t \cr
  (1,1) & 0 & 0 & 0 & 0 },
\]
where $c$ is an  $\FF$-progressively measurable stochastic processes, which has countably many jumps and such that \[
0 \leq c_t \leq a_t \wedge b_t
,\quad \tT.
\]
Moreover, let $\xi$ be an $S$-valued random variable satisfying
\[
	{\text{Prob}}( \xi = (0,0) ) = 1.
\]
 It can be easily checked that  $(\Lambda,  \xi )$ satisfies conditions (CMC-1)-(CMC-4), so that it is a strong CMC pre-copula between conditional Markov chains $Y^1$, $Y^2$.
 Now, in view of \cite[Theorem 3.4]{BieJakNie2014a} %\ref{thm:CMC-under-constr}
and \cite[Proposition  4.17]{BieJakNie2014a}%\ref{thm:CMC-DCMC}
, one can construct a stochastic process  $X=(X^1,X^2)$, which is a two-variate $(\FF, \FF^X)$-CDMC with an $\FF$-intensity $\Lambda$ and such that $X_0 = \xi$. Moreover, by Proposition \ref{cor:56}, the process $X$ is strongly Markovian consistent with respect to $(\FF,\cY)$ and hence $X$ is a strong CMC copula between $Y^1$ and $Y^2$. { Note also that, in  view of interpretation of intensity, the coordinates of the process $X$ have common jumps, provided that $c > 0$. }

\begin{remark}
%'One may argue that the example belows is too simple to be applied in practice.
We have chosen this very simple example just to illustrate an idea of construction of strong copulae for CMC. One can, in similar spirit as in \cite{BCCH2014}, generalize it to arbitrary dimension $N$ preserving that each marginal process is two-states absorbing CMC. Then the $\FF$-intensity matrix  has a similar structure as in the above example, i.e. its entries are marginal intensities minus intensities of "common jumps" to absorbing states.
Generalization to a higher number of non-absorbing states is tricky and requires clever parametrization, since number of free parameters in strong CMC copula becomes enormously large (see e.g. \cite{BieVidVid2008}).
\end{remark}

\subsubsection{Perfect dependence strong CMC copula}\label{PDC}
%It was observed in Section \ref{niezakoniecznie} that the fact that
%Condition (ASM-k) holds for all  $k=1,2,\ldots,N$, does not
%imply that strong Markovian consistency holds for $X$. Thus, it should be possible to construct a strong CMC copula  $X$ such that the corresponding intensity process $\Lambda$ does not satisfy Condition (ASM-k) for some $k$. The present example shows that this indeed is the case.

Let $Y^1$, \ldots, $Y^N$ be processes such that each $Y^k $ is an $(\mathbb{F},\mathbb{F}^{Y^k})$-CMC, and such that they have the same $\cF_T$ conditional laws. Consider process $X=(X^1,\ldots,X^N)$, where $X^k=Y^1,\quad k=1,2,\ldots,N$. It is clear that $X$ furnishes a strong CMC copula between conditional Markov chains $Y^1$, \ldots, $Y^N$.

Obviously other CMC copulae between $Y^1, \ldots, Y^N$, such as conditional independence copulae, can be constructed.

\subsection{Weak CMC Copulae }\label{sec:wcmc}

\bd Let $\cY=\set{Y^1, \ldots, Y^N}$ be a family of processes, {defined on some underlying probability space $(\Omega,\cA,\Q)$,} such that each $Y^k $ is an $(\mathbb{F},\mathbb{F}^{Y^k})$-CMC with values in $S_k$. A \textit{weak CMC copula} between processes $Y^1, \ldots, Y^N$ is any multivariate process  $X=(X^1,\ldots,X^N)$, defined on $(\Omega,\cA)$ endowed with some probability measure $\P$, such that $X$ is an $(\FF,\FF^X)$--CMC, and such that it satisfies the weak Markovian consistency property with respect to  $(\mathbb{F},\cY)$.
\ed

Similarly as in the case of the strong CMC copulae, the methodology developed in \cite{BieJakNie2014a} allows us to construct weak CMC copulae between processes $Y^1, \ldots, Y^N$, that are defined on some underlying probability space $(\Omega,\cA,\Q)$ endowed with a reference filtration $\FF$, and are such that each $Y^k$ is $(\mathbb{F},\mathbb{F}^{Y^k})$-CDMC  with $\mathbb{F}$--intensity, say, $\Psi^k=[\psi^{k;x^ky^k}]_{x^k,y^k\in S_k}$.
%The additional feature of our construction is that, typically, the constructed CMC copulae $X$  are also $(\mathbb{F},\mathbb{F}^{X})$-DSMC.

In view of \cite[Theorem 3.4]{BieJakNie2014a}, Proposition \ref{prop:wmcnec},  Lemma \ref{lem:paskudny}, as well as of Remark \ref{hftb},  a natural starting point for constructing a weak CMC copula ${X}$  between $Y^1, \ldots, Y^N$ is to determine  any system of stochastic processes
$(\lambda^{xy})_{x,y \in S}$ and any $S$-valued random variable $\xi=(\xi^1, \ldots, \xi^N)$ on $(\Omega,\cA)$ and to find a probability measure $\bP$, such that the following conditions are satisfied:

\begin{description}
%in unknowns $\lambda^{(x^1,\ldots,x^k,\ldots,x^N)(y^1,\ldots,y^k,\ldots,y^N)}_t$
%	\item[(CMC-2)] \[ \lambda^{xx}_t=- \sum_{z \in S :\, z \ne x} \lambda^{xz}_t,\quad x\in S,\ \tT,\]
%\begin{color}{red}
%\item[(CMC-3)]
%The matrix process $\Lambda_t=[\lambda^{xy}_t]_{x,y  \in S} $ correctly defines the $\FF$-intensity of a stochastic process, say $X$, with values in $S\, ,$ which is  a $(\mathbb{F},  \mathbb{F}^X)$-CMC and a $(\mathbb{F},  \mathbb{F}^X)$-DSMC.\end{color}
\item[(WCMC-1)]
The matrix process $\Lambda_t=[\lambda^{xy}_t]_{x,y  \in S} $ satisfies canonical conditions relative to the pair $(S,\FF).$
\item[(WCMC-2)]
\[
    {\bP}( \xi =y | \F_T ) = {\bP}( \xi =y | \F_0 ), \quad \forall y \in S.
\]
\item[(WCMC-3)]
\[{\bP}( \xi^k =y^k | \F_T ) = \Q( Y^k_0 =y^k | \F_T ), \quad \forall y^k \in S_k, k =1, \ldots, N.
\]
\item[(WCMC-4)]
\begin{equation}
\begin{aligned}\lab{vn-2.1}
\!\!\!\!\!\!\!\!\!\!\!\!\!\!\!\!\!\!\!\!
 \psi^{k;x^ky^k}_t(\omega)
&
\!=\! \sum_{\substack{x^n,y^n\in {S}_n \\n=1,2,\ldots,N, n \neq k }}  & \!\!\!\!\!\!\!\lambda^{(x^1, \ldots, x^N)(y^1, \ldots, y^N)}_t(\omega)
\frac{{\P} \left(X^1_t=x^1, \ldots, X^N_t=x^N|{\mathcal F}_t\right)(\omega)  }{{\P} \left(X^k_t=x^k |{\mathcal F}_t \right) (\omega) },
%nonumber
\\
 && \!\!\!\!\!\!\!\forall x^k,y^k\!\in\! {S}_k,\ y^k \neq x^k\ \textrm{and} \ \forall \omega \in B(t,k,x^k)\cap C(t,k,x^k),
\end{aligned}
\end{equation}
for almost every $\tT$, where process $X=(X^1,\ldots,X^N)$ is a $(\FF,\FF^X)$--CMC {(under probability measure $\bP$)} with intensity $\Lambda$ and initial distribution given by $\xi.$
\end{description}

We will call any triple $(\Lambda,\xi,X)$ satisfying conditions (WCMC-1)--(WCMC4) \emph{a base for weak CMC copula} between processes $Y^1, \ldots, Y^N$, and we will call the process $X$ in $(\Lambda,\xi,X)$ \emph{a candidate for weak CMC copula} between processes $Y^1, \ldots, Y^N$. So, a possible method for constructing a weak CMC copula between processes $Y^1, \ldots, Y^N$ is to first construct a  base $(\Lambda,\xi,X)$ for weak CMC copula between processes $Y^1, \ldots, Y^N$, and then
 to skillfully verify that the candidate process $X$ satisfies the weak Markovian consistency property with respect to  $(\mathbb{F},\cY)$, and thus, that it is a {weak CMC copula} between processes $Y^1, \ldots, Y^N$. We will illustrate application of this method in Section \ref{ex:weaknotstron}.

Before we proceed to the next subsection, we observe that remark analogous to Remark \ref{fikusna} applies in the case of the weak CMC copulae.

Finally, it needs to be said that in several applications an important role is played by the so called \emph{weak only CMC copulae}, that is, the weak CMC copulae that are not strong CMC copulae. The next section provides an example of a weak only CMC copula. %\tr{moze dopisac o contagion}

\subsubsection{Example of a weak CMC copula that is not strong CMC copula}\label{ex:weaknotstron}
%\begin{example}\label{ex:weaknotstron}

In Section \ref{examples} we gave three examples of strong CMC copulae. Consequently, they are also examples of weak CMC copulae. Here, we will give an example of a weak only CMC copula.
In particular, this property implies that in the present example the immersion property formulated in Theorem  \ref{thm:weak->strong-cons} is not satisfied.

Let us consider processes $Y^1$ and $Y^2$, { defined on some probability space $(\Omega,\cA,\Q)$,} such that each $Y^i$ is an $(\FF, \FF^{Y^i})$-CDMC
taking values in the state space $S_i=\set{0,1}$. We assume that $\FF$-intensities of $Y^1$ and $Y^2$ are, respectively,
\begin{align*}
\Psi^1_t =
\left(
\begin{array}{cc}
	-(a_t  + c_t) + c_t  \frac{ \alpha_t }{  \delta_t  + \alpha_t } & (a_t  + c_t) - c_t  \frac{ \alpha_t }{  \delta_t  + \alpha_t }\\
0 & 0
\end{array}
\right),
\end{align*}
\begin{align*}
\Psi^2_t =
\left(
\begin{array}{cc}
	-(b_t  + c_t) + c_t  \frac{ \beta_t }{  \delta_t + \beta_t } & (b_t  + c_t) - c_t  \frac{ \beta_t }{  \delta_t  + \beta_t }\\
0 & 0
\end{array}
\right),
\end{align*}
where
\[
    \alpha_t = e^{ - \int_0^t a_u du} \int_0^t b_u e^{-\int_0^u (b_v + c_v) dv} du, \qquad
    \beta_t = e^{ - \int_0^t b_u du} \int_0^t a_u e^{-\int_0^u (a_v + c_v) dv} du,
\]
\[
\delta_t = e^{-\int_0^t (a_u + b_u + c_u) du},
\]
for $a,b,c$ being positive  $\FF$-adapted stochastic processes.
Moreover, suppose that
\[
\Q( Y_0^i = 0  )
=
1
\quad i = 1, 2,
\]
which implies $\Q( Y_0^i = 0 | \cF_T) = 1 $.

Our goal is to find a weak CMC copula between $Y^1$ and $Y^2$.
Towards this end we will look for an $(\FF, \FF^X)$-CMC process $X$,
{ defined on some probability space $(\Omega,\cA,\P)$,} satisfying condition \eqref{vn-2.1} adapted to the present setup. In particular, the state space of process $X$ needs to be equal to $S = \set{ (0,0),(0,1),(1,0),(1,1)}$.

However, since condition \eqref{vn-2.1} is a necessary condition for weak Markovian consistency with respect to $(\FF, \cY)$,  but not a sufficient one in general, then a process satisfying \eqref{vn-2.1} may not be weakly Markovian consistent with respect to $(\FF, \cY)$.
Nevertheless, we will construct an $(\FF,\FF^X)$-CMC process $X$ that satisfies condition \eqref{vn-2.1} and is weakly Markovian consistent with respect to $(\FF, \cY)$, so that it is a weak CMC copula between $Y^1$ and $Y^2$.

Let us consider stochastic process $X$ with state space $S$, which is an $(\FF, \FF^X)$-CDMC with an $\FF$-intensity matrix $\Lambda$ given by
\begin{align}
\Lambda_t=
\bordermatrix{&(0,0)&(0,1)&(1,0)&(1,1)\cr
                (0,0)&-(a_t+b_t+c_t) &  b_t  & a_t & c_t\cr
                (0,1)& 0  &  -a_t & 0 & a_t \cr
                (1,0)& 0 & 0 & -b_t & b_t\cr
                (1,1)& 0  &   0 &0 & 0}
%=
%\left(
%\begin{array}{cccc}
%	-(a_t + b_t + c_t) & b_t & a_t & c_t \\
%		0	& -a_t & 0 & a_t \\
%		0  & 0 & -b_t & b_t \\
%0 & 0 & 0 & 0
%\end{array}
%\right)
,
\end{align}
and with the initial distribution
\[
    {\P }( X_0 =(0,0)) = 1.
\]
% It turns out that a solution to the necessary condition \eqref{vn-2} is a
%valid $\FF$
The components  $X^1$ and $X^2$ are processes with state space $S = \set{0,1}$, and such that the state $1$ is an absorbing state for both $X^1$ and $X^2$. Thus, by similar arguments as in \cite[Example 2.4]{BieJakNie2015}, $X^1$ (resp. $X^2$) is an $(\FF, \FF^{X^1})$-CDMC (resp. $(\FF, \FF^{X^2})$-CDMC). Consequently, $X$ is a weakly Markovian consistent process relative to $(X^1, \FF)$ ($(X^2, \FF)$ resp.).
%This can be easily shown using similar arguments as in \cite[Example 2.4]{BieJakNie2015}.

We will now  compute, using \eqref{vn-2}, an $\FF$-intensity of $X^1$ and an $\FF$-intensity of  $X^2$.
To this end we first solve the conditional Kolmogorov forward equation for $P(s,t)=[p_{xy}(s,t)]_{x,y \in S}$, i.e.
\begin{align}\label{eq:cond-Kol}
d P(s,t ) = P(s,t) \Lambda(t) dt, \quad  P(s,s) = \II.
\end{align}
This is done in order to compute the following conditional probabilities (see \cite[Theorem 4.10]{BieJakNie2014a})
\[
	\P(X_t = y | \F_T \vee \F^X_s) = \sum_{x \in S } \I_\set{X_s =x} p_{xy}(s,t),
\]
which will be used in computation of conditional probabilities, of the form
%\sout{(cf. \eqref{vn-2})}
\[
    \frac{\P(X^1_t = x^1, X^2_t = x^2  | \F_t )}{ \P(X^1_t = x^1  | \F_t )}.
\]
One can easily verify, by solving appropriate ODEs, that the unique solution of \eqref{eq:cond-Kol} is given by
\[
P(s,t)=
\left(
  \begin{array}{cccc}
    e^{ - \int_s^t (a_u + b_u + c_u) du }  & \alpha(s,t) & \beta(s,t)&\gamma(s,t)\\
    0 & e^{ - \int_s^t a_u du } & 0 & 1 - e^{ - \int_s^t a_u du } \\
    0 & 0 & e^{ - \int_s^t b_u  du} & 1 - e^{ - \int_s^t b_u du}\\
    0 & 0 & 0 & 1 \\
  \end{array}
\right),
\]
where
\begin{align*}
\alpha(s,t) &= e^{ - \int_s^t a_u du} \int_s^t b_u e^{-\int_s^u (b_v + c_v) dv} du,
\\
\beta(s,t) &= e^{ - \int_s^t b_u du} \int_s^t a_u e^{-\int_s^u (a_v + c_v) dv} du,
\\
\gamma(s,t) &= 1- e^{ - \int_s^t (a_u + b_u + c_u) du } -\alpha(s,t) - \beta(s,t).
\end{align*}
%Now we will use necessary condition \eqref{vn-2} to compute its intensities.
Since $X$ is started at $(0,0)$, then by \cite[Proposition 4.6]{BieJakNie2014a}
we have that
\begin{align*}
	\P(X^1_t = 0, X^2_t = 0 | \F_t) &=
\E (\P(X^1_t = 0, X^2_t = 0 | \F_T)| \F_t)
= \E (e^{-\int_0^t (a_u + b_u + c_u) du} | \F_t)
\\
&= e^{-\int_0^t (a_u + b_u + c_u) du} = \delta_t.
\end{align*}
In an analogous way we conclude that
\begin{align*}
	\P(X^1_t = 1, X^2_t = 0 | \F_t) &= \beta (0,t), \\
	\P(X^1_t = 0, X^2_t = 1 | \F_t) &= \alpha (0,t).
\end{align*}
Thus
\begin{align*}
	\frac{\P(X^1_t = 1, X^2_t = 0 | \F_t) }{ \P(X^2_t = 0 | \F_t)}
&= \frac{ \beta_t }{  \delta_t  + \beta_t },
\\
	\frac{\P(X^1_t = 0, X^2_t = 1 | \F_t) }{ \P(X^1_t = 0 | \F_t)}
&= \frac{ \alpha_t }{  \delta_t  + \alpha_t },
\end{align*}
{where for brevity of notation we let
$\alpha_t = \alpha (0,t), \beta_t= \beta (0,t)$.
}
Here \eqref{vn-2} takes the form
\begin{align*}
\1_\set{ X^1_t=  0} \lambda^{1;01}_t
&=
\1_\set{ X^1_t=  0}\bigg(
( \lambda^{(00)(10)}_t
+ \lambda^{(00)(11)}_t)
\frac{\P(X^1_t = 0, X^2_t = 0 | \F_t)}{\P( X^1_t = 0| \F_t)}
\\
& \qquad  \qquad +
(\lambda^{(01)(10)}_t
+
\lambda^{(01)(11)}_t )
\frac{\P(X^1_t = 0, X^2_t = 1 | \F_t)}{\P( X^1_t = 0| \F_t)}
\bigg)
 \\
&=
\1_\set{ X^1_t=  0}\bigg(( a_t + c_t)
\frac{\P(X^1_t = 0, X^2_t = 0 | \F_t)}{\P( X^1_t = 0| \F_t)}
+
a_t
\frac{\P(X^1_t = 0, X^2_t = 1 | \F_t)}{\P( X^1_t = 0| \F_t)}
\bigg)
 \\
 &  =
    \1_\set{ X^1_t=  0}\bigg( (a_t  + c_t) - c_t  \frac{ \alpha (0,t) }{  \delta(0,t)  + \alpha (0,t) }
    \bigg),
\end{align*}
and
\begin{align*}
\1_\set{ X^1_t=  1} \lambda^{1;10}_t
&=
\1_\set{ X^1_t=  1}\bigg(
( \lambda^{(10)(00)}_t
+ \lambda^{(10)(01)}_t)
\frac{\P(X^1_t = 1, X^2_t = 0 | \F_t)}{\P( X^1_t = 1| \F_t)}
\\
& \qquad  \qquad +
(\lambda^{(11)(00)}_t
+
\lambda^{(11)(01)}_t )
\frac{\P(X^1_t = 1, X^2_t = 1 | \F_t)}{\P( X^1_t = 1| \F_t)}
\bigg)
 \\
&=
\1_\set{ X^1_t=  1}\bigg( 0
\frac{\P(X^1_t = 0, X^2_t = 0 | \F_t)}{\P( X^1_t = 0| \F_t)}
+
0
\frac{\P(X^1_t = 0, X^2_t = 1 | \F_t)}{\P( X^1_t = 0| \F_t)}
\bigg)
 \\
 &  = \1_\set{ X^1_t=  1} 0.
\end{align*}
Therefore, since $\P(X^1_t=0| \F_t) > 0$, the $\FF$-intensity  of $X^1$ is given by
\begin{align*}
\Lambda^1(t) =
\left(
\begin{array}{cc}
	-(a_t  + c_t) + c_t  \frac{ \alpha_t }{  \delta_t  + \alpha_t } & (a_t  + c_t) - c_t  \frac{ \alpha_t }{  \delta_t  + \alpha_t }\\
0 & 0
\end{array}
\right)
=
\Psi^1_t
.
\end{align*}
Analogously, the $\FF$-intensity  of $X^2$ is given by
\begin{align*}
\Lambda^2(t) =
\left(
\begin{array}{cc}
	-(b_t  + c_t) + c_t  \frac{ \beta_t }{  \delta_t + \beta_t } & (b_t  + c_t) - c_t  \frac{ \beta_t }{  \delta_t  + \beta_t }\\
0 & 0
\end{array}
\right)=
\Psi^2_t.
\end{align*}
Consequently $X$ is a weak CMC copula for $Y^1 $ and $Y^2$.

Now we will demonstrate that $X$ is in fact weak only CMC copula for $Y^1 $ and $Y^2$.
We have
\begin{align*}
    &\P( X^1_t = 0 | \F_T \vee \sigma(X_s) )
    \1_\set{ X^1_s = 0,  X^2_s = 0 }
    =
    \1_\set{ X^1_s = 0,  X^2_s = 0 }
    (p_{(0,0)(0,0)}(s,t)
    +
    p_{(0,0)(0,1)}(s,t)
    )
    \\
    &=
    \1_\set{ X^1_s = 0,  X^2_s = 0 }
    \bigg(
    e^{ - \int_s^t (a_u + b_u + c_u) du }
    +
    %\alpha(s,t)
e^{ - \int_s^t a_u du} \int_s^t b_u e^{-\int_s^u (b_v + c_v) dv} du
    \bigg),
\end{align*}
and
\begin{align*}
    \P( X^1_t = 0 | \F_T \vee \sigma(X_s) )
    \1_\set{ X^1_s = 0,  X^2_s = 1 }
    &=
    \1_\set{ X^1_s = 0,  X^2_s = 1 }
    (p_{(0,1)(0,0)}(s,t)	
    +
    p_{(0,1)(0,1)}(s,t)
    )
    \\
    &=
    \1_\set{ X^1_s = 0,  X^2_s = 1 }
    e^{ - \int_s^t a_u  du }.
\end{align*}
Clearly
\begin{equation}\label{eq:weak-only}
\bigg(
    e^{ - \int_s^t (a_u + b_u + c_u) du }
    +
    %\alpha(s,t)
e^{ - \int_s^t a_u du} \int_s^t b_u e^{-\int_s^u (b_v + c_v) dv} du
    \bigg)
    \neq
e^{ - \int_s^t a_u  du },
\end{equation}
 unless $c\equiv 0$ on $[s,t]$. In this case
\eqref{eq:weak-only} implies that
\[
\P\big(
\P( X^1_t = 0 | \F_s \vee \F^X_s )
\neq
\P( X^1_t = 0 | \F_t \vee \sigma(X^1_s) )
\big) > 0.
\]
Thus process $X$ is not strongly Markovian consistent, so $X$ is  a weak only CMC copula between $Y^1$ and $Y^2$ unless $c\equiv 0$.
  Nevertheless,  process $X$ is a strong CMC copula between $Y^1$ and $Y^2$ for $c\equiv 0$, which follows from  Section \ref{silaczek}.

%\end{example}
\begin{remark}
Note that $\Lambda(t)$ admits the following representation
\begin{equation}\label{rozwiniatko}
	\Lambda(t) = \Psi^1(t) \otimes I_2 + I_1 \otimes \Psi^2(t)
+ B_{12}(t) - B_1(t) - B_2(t),
\end{equation}
where
\begin{align*}
&\Psi^1(t) \otimes I_2 + I_1 \otimes \Psi^2(t) \\
&\!\!\!\!\!\!\!\!\!\!\!\!\!\!=
\left(
\begin{array}{cccc}
	-(a_t + b_t + c_t  \frac{ \delta_t }{ \delta_t + \beta_t } + c_t  \frac{ \delta_t }{ \delta_t + \alpha_t } ) & b_t + c_t  \frac{ \delta_t }{ \delta_t + \beta_t } & a_t + c_t  \frac{ \delta_t }{  \delta_t  + \alpha_t } & 0 \\
		0	& -a_t - c_t  \frac{ \delta_t }{  \delta_t  + \alpha_t } & 0 & a_t + c_t  \frac{ \delta_t }{  \delta_t  + \alpha_t } \\
		0  & 0 & -b_t - c_t  \frac{ \delta_t  }{  \delta_t  + \beta_t } & b_t + c_t  \frac{ \delta_t  }{  \delta_t  + \beta_t } \\
0 & 0 & 0 & 0
\end{array}
\right).
\end{align*}
gives the conditionally independent copula between $Y^1$ and $Y^2$ (cf. Section \ref{CIC}), and the remaining terms

\begin{align*}
B_{12}(t) &= \left(
\begin{array}{cccc}
	-c_t & 0 & 0 & c_t \\
		0	& 0 & 0 & 0 \\
		0  & 0 & 0 & 0 \\
0 & 0 & 0 & 0
\end{array}
\right), \\
B_1(t) &= \left(
\begin{array}{cccc}
	-c_t  \frac{ \delta_t  }{  \delta_t + \beta_t } & c_t  \frac{ \delta_t  }{  \delta_t  + \beta_t } & 0 & 0 \\
		0	& 0 & 0 & 0 \\
		0  & 0 & -c_t  \frac{ \delta_t  }{  \delta_t  + \beta_t }  & c_t  \frac{ \delta_t  }{  \delta_t  + \beta_t }  \\
0 & 0 & 0 & 0
\end{array}
\right), \\ %\quad
B_2(t) &= \left(
\begin{array}{cccc}
	-c_t  \frac{ \delta_t }{  \delta_t  + \alpha_t }  & 0 & c_t  \frac{ \delta_t}{  \delta_t  + \alpha_t }  & 0 \\
		0	& -c_t  \frac{ \delta_t }{  \delta_t  + \alpha_t }  & 0 & c_t  \frac{ \delta_t }{  \delta_t  + \alpha_t }  \\
		0  & 0 & 0 & 0 \\
0 & 0 & 0 & 0
\end{array}
\right)
\end{align*}
introduce the dependence structure between $Y^1$ and $Y^2$.

Representations of the form \eqref{rozwiniatko} are important for construction of CMC copulae and will be studied in detail in  Bielecki, Jakubowski and Niew\k{e}g{\l}owski \cite{BJN-buczek}.
\end{remark}

\section{Applications to  the premium evaluation for unemployment insurance products}\label{BGW}

In the recent paper by Biagini, Groll and  Widenmann \cite{BiaWid2013} a very interesting problem of evaluation  of  premia  for unemployment insurance products, for a pool of individuals, was considered. We would like to suggest here a possible generalization of the model studied in \cite{BiaWid2013}; this generalization, we believe, may provide a more adequate way to deal with computation of the premia.

\subsection{Model of Biagini, Groll and Widenmann }
Biagini et al. \cite{BiaWid2013} used the DSMC framework to model the dynamics of employment status of an individual. The dynamics are modeled in \cite{BiaWid2013} under the probability measure, say $\P,$ called a real-world measure. Then, using these dynamics they aim at computing for $\tT$ the insurance premium, which is denoted as $P_t$ .

In \cite{BiaWid2013}, the evolution of the employment status of an individual $k$ is given in terms of a Markov chain, say $X^k$, which takes values in the state space $S_k=\{1,2\},$ where the state $"1``$ indicates that the individual is employed, and the $"2``$ indicates that the individual is unemployed. It is assumed that process $X^k$ is an $(\FF^Z,\FF^{X^k})$-DSMC, where $\FF^Z$ is a reference filtration generated by some factor process $Z$.

As stated earlier, the quantity to be computed for the individual $k$ is the value of the premium of insurance against unemployment. Roughly speaking, the premium $P^k_t$ at time $t$ is given as
\[P_t^k=\E_\P(\Phi_k(X^k)|\G^k_t),\]
where $\Phi_k$ is some random functional of process $X^k$, and where $\G^k_t=\F^Z_t\vee \F^{X^k}_t.$ In particular, the premium at time $t=0$ needs to be computed, that is
\[P_0^k=\E_\P(\Phi_k(X^k)|\G^k_0).\]
Note, that we have written $P_0^k$ as a conditional expectation, given  $\G^k_0$, rather than the unconditional expectation, as it is done in formula (2) in \cite{BiaWid2013}.

\subsection{Proposed CMC copula approach}
We think that, for the purpose of evaluation of premia for unemployment insurance products for a pool of individuals, labeled as $k=1,2,\ldots,N,$
it is important to account for possible dependence between processes $X^k,\ k=1,2,\ldots,N.$

Thus, we think that it may be advantageous to enrich the model studied in \cite{BiaWid2013} by considering a process~$Y=(Y^1,\ldots,Y^N),$ which is a  CMC copula between processes $X^k,\ k=1,2,\ldots,N.$

Thanks to copula property, the characteristics of dependence between processes~$X^k,\ k=1,2,\ldots,N$ can be estimated separately from estimation of the distributional characteristics of each process $X^k.$ The latter task can be efficiently executed using the methodology outlined in  \cite{BiaWid2013}.

The premium $P^k_t$ at time $t$ is given in the CMC copula model as
\[P_t^k=\E(\Phi_k(Y^k)|\widehat \G^{k}_t),\]
where  $\widehat \G^k_t=\F^Z_t\vee \F^{Y^k}_t.$ {If process $Y$ is constructed as a weak only CMC copula between processes $X^k,\ k=1,2,\ldots,N,$ then we have that, with $\widehat \G_t=\F^Z_t\vee \F^{Y}_t,$ 
\[\E(\Phi_k(Y^k)|\widehat \G^k_t) \neq \E(\Phi_k(Y^k)|\widehat \G_t).\]}
This, of course, means that the employment status of the entire pool is influences the calculation of the individual premium, a feature, which we think is important.

{The theory of strong and weak CMC copulae can be extended in straightforward manner to modeling structured dependence between subgroups of processes $X^k,\ k=1,2,\ldots,N,.$ This will allow for study of insurance premia modeling for relvant subgroups of employees.}

\section*{Appendix A}

%The following lemma will be used below.
\begin{lemma}\label{lem:paskudny}
Let  $Z$ be an $(\mathbb{F},\mathbb{F}^{Z})$-CDMC and let $U$ be an $(\mathbb{F},\mathbb{F}^{U})$-CDMC, with values in some (finite) state space $\wh S$, and with  intensities $\Gamma^Z$  and $\Gamma^U$,  respectively.
Then, the conditional law of $Z$ given $\mathcal{F}_T$ coincides with the conditional law of $U$ given $\mathcal{F}_T$ if and only if
%\begin{enumerate}
%\item
\begin{align}
\lab{eq:intZequalsU}
 \Gamma^Z&=\Gamma^U \quad d u \otimes d \P-a.e.,  \\
\lab{eq:lawZ0equalsU0} \P(Z_0 =x | \F_T ) &= \P(U_0 =x | \F_T ) \quad \forall x \in \wh{S}.\end{align}
%
%%\item
%conditional law of $Z_0$ given $\mathcal{F}_T$ coincides with the conditional law of $U_0$ given $\mathcal{F}_T$.
%%\end{enumerate}
\end{lemma}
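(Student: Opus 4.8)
The plan is to exploit the fact that, for a DSMC, the $\mathcal{F}_T$-conditional law is completely encoded by the pair consisting of the $\mathcal{F}_T$-conditional initial distribution and the conditional transition field $P(s,t)$, and that the latter is in turn determined by the intensity through the Kolmogorov equations \eqref{eq:INT-trans-prob-backward}--\eqref{eq:INT-trans-prob-forward}.

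For sufficiency, I would first establish by induction on $m$ the product representation
$$\mathbb{P}(Z_{t_1}=x_1,\ldots,Z_{t_m}=x_m\mid\mathcal{F}_T)=\sum_{x_0\in\wh S}\pi^Z_{x_0}\,p^Z_{x_0x_1}(0,t_1)\cdots p^Z_{x_{m-1}x_m}(t_{m-1},t_m),$$
where $\pi^Z_{x_0}=\mathbb{P}(Z_0=x_0\mid\mathcal{F}_T)$. The base case is the DSMC relation \eqref{eq:DSMC-def-3} after taking $\mathcal{F}_T$-conditional expectation, and the inductive step conditions on $\mathcal{F}_T\vee\mathcal{F}^Z_{t_{m-1}}$, applies \eqref{eq:DSMC-def-3} to the last coordinate, and pulls the resulting factor $p^Z_{x_{m-1}x_m}(t_{m-1},t_m)$ — which is $\mathcal{F}_{t_m}\subseteq\mathcal{F}_T$-measurable — out of the remaining conditional expectation. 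An identical formula holds for $U$. Next I would show that \eqref{eq:intZequalsU} forces $P^Z=P^U$: for $\mathbb{P}$-a.e.\ $\omega$ both fields solve, as functions of $t$, the same random linear integral equation \eqref{eq:INT-trans-prob-forward} with the common coefficient $\Gamma^Z_\cdot(\omega)=\Gamma^U_\cdot(\omega)$, so Gronwall/Picard uniqueness gives $P^Z(s,t)=P^U(s,t)$ on the occupied states, which is all the product formula sees. Combined with $\pi^Z=\pi^U$ from \eqref{eq:lawZ0equalsU0}, the two product representations coincide, i.e.\ the $\mathcal{F}_T$-conditional laws agree.

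For necessity, \eqref{eq:lawZ0equalsU0} is immediate by taking $m=1$, $t_1=0$ in the matching finite-dimensional conditional laws. To recover the intensity I would first recover the transition field: using \eqref{eq:DSMC-def-3} and the tower property,
$$\mathbb{P}(Z_t=y,Z_s=x\mid\mathcal{F}_T)=\mathbb{E}\!\left[\I_{\{Z_s=x\}}\,\mathbb{P}(Z_t=y\mid\mathcal{F}_T\vee\mathcal{F}^Z_s)\mid\mathcal{F}_T\right]=p^Z_{xy}(s,t)\,\mathbb{P}(Z_s=x\mid\mathcal{F}_T),$$
the last step using that $p^Z_{xy}(s,t)$ is $\mathcal{F}_t\subseteq\mathcal{F}_T$-measurable. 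Writing the same identity for $U$, equating the left-hand sides, and using $\mathbb{P}(Z_s=x\mid\mathcal{F}_T)=\mathbb{P}(U_s=x\mid\mathcal{F}_T)$, I conclude $p^Z_{xy}(s,t)=p^U_{xy}(s,t)$ on the set $\{\mathbb{P}(Z_s=x\mid\mathcal{F}_T)>0\}$; on its complement both equal $\I_{\{x=y\}}$ by the convention in \eqref{tp}, so $P^Z=P^U$ on the occupied states.

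It remains to pass from $P^Z=P^U$ back to $\Gamma^Z=\Gamma^U$ $du\otimes d\mathbb{P}$-a.e., and this is the step I expect to be the main obstacle. Plugging $P^Z=P^U=:P$ into \eqref{eq:INT-trans-prob-forward} gives $\int_v^t P(v,u)\,(\Gamma^Z_u-\Gamma^U_u)\,du=0$ for all $v\le t$; differentiating in $t$ yields $P(v,t)(\Gamma^Z_t-\Gamma^U_t)=0$ for a.e.\ $t$, and letting $v\uparrow t$ (where $P(v,t)\to I$, hence is invertible for $v$ close to $t$) forces $\Gamma^Z_t=\Gamma^U_t$ for a.e.\ $t$. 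The care needed here is precisely in the sets where $\mathbb{P}(Z_s=x\mid\mathcal{F}_T)$ vanishes and in the fact that the intensity is only determined up to the equivalence of \cite[Definition 2.5]{BieJakNie2014a}; the a.e.\ statement \eqref{eq:intZequalsU} is exactly this equivalence restricted to the occupied states, so the recovery has to be carried out in that sense rather than pathwise everywhere.
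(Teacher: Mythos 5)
Your proposal is correct and follows essentially the same route as the paper: both directions reduce the $\F_T$-conditional law of a CDMC to the pair (conditional initial law, c-transition field), and then pass between the field and the intensity via the Kolmogorov equations. The differences are in packaging. For sufficiency, the paper simply cites \cite[Proposition 4.6]{BieJakNie2014a} for the finite-dimensional product formula that you re-derive by induction from \eqref{eq:DSMC-def-3}; your induction is in effect a proof of that proposition. For necessity, where you differentiate $\int_v^t P(v,u)(\Gamma^Z_u-\Gamma^U_u)\,du=0$ in $t$ and let $v\uparrow t$ using $P(v,t)\to I$, the paper concludes in one stroke from the global invertibility of $P^Z(v,u)$ (\cite[Proposition 3.11]{JakNie2010}); your limit argument works, but it requires exactly the bookkeeping you flag (the exceptional $t$-set depends on $v$, so one should run the argument over rational $v$ and use continuity of $v\mapsto P(v,t)$), whereas the citation spares all of it.

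Two small corrections to your necessity step. First, your treatment of the null set is slightly off: the convention branch of \eqref{tp} is governed by the event $\{\P(Z_s=x\mid\F_t)=0\}$, not $\{\P(Z_s=x\mid\F_T)=0\}$, and these differ in general since the latter is $\F_T$- but not $\F_t$-measurable; on the discrepancy set the ratio branch applies, not the identity branch. The clean fix is to tower the equality of $\F_T$-conditional laws down to $\F_t$: then $\P(Z_t=y,Z_s=x\mid\F_t)=\P(U_t=y,U_s=x\mid\F_t)$ and $\P(Z_s=x\mid\F_t)=\P(U_s=x\mid\F_t)$ a.s., so \eqref{tp} gives $P^Z=P^U$ as full matrix fields, both branches included. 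Second, and consequently, your closing hedge that \eqref{eq:intZequalsU} can only be recovered ``on the occupied states'' undersells the conclusion: once $P^Z=P^U$ holds for all rows and $P^Z(v,u)$ is invertible, the forward equation \eqref{eq:INT-trans-prob-forward} yields $\Gamma^Z=\Gamma^U$ $du\otimes d\P$-a.e.\ without restriction, which is exactly the statement of the lemma.
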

\begin{proof}
First we prove sufficiency.
So, suppose that   \eqref{eq:intZequalsU} and \eqref{eq:lawZ0equalsU0} hold. Recall that the c-transition fields $P^Z$ ($P^U$ respectively) satisfy Kolmogorov equations
\eqref{eq:INT-trans-prob-backward} and \eqref{eq:INT-trans-prob-forward}.
% for all  $v\leq t$,
%\begin{equation}\label{eq:INT-trans-prob-backward}
% P^Z(v,t) - \mathrm{I}     =   \int_v^t  \Gamma^Z_u P^Z(u,t) du,
%\end{equation}
%\begin{equation}\label{eq:INT-trans-prob-forward}
% P^Z(v,t)  - \mathrm{I} = \int_v^t P^Z(v,u) \Gamma^Z_u du.
%\end{equation}
%%with $\Gamma^Z$ beeing an $\FF$-intensity ($\Gamma^U$ respectively).
Since \eqref{eq:intZequalsU} holds we have, by uniqueness of solutions of Kolmogorov equations, that $P^Z=P^U$.
%
%\[
%	P^Z(v,t) - P^U(v,t) = \int_v^t P^Z(v,u)\Gamma^Z_u - P^U(v,u) \Gamma^U_u du=
%\int_v^t (P^Z(v,u)- P^U(v,u)) \Gamma^U_u du
%\]
This and \eqref{eq:lawZ0equalsU0}, by \cite[Proposition 4.6]{BieJakNie2014a} %\ref{wn2.4}
 (see eq. (4.8)%\eqref{eq:c-fidis}
) imply that
conditional law of $Z$ and $U$ given $\F_T$ coincide.
\\
\noindent
Now we prove necessity.
Suppose that conditional laws of $Z$ and $U$ given $\F_T$  coincide, we want to show  that \eqref{eq:intZequalsU} and \eqref{eq:lawZ0equalsU0}  hold.
First, note that the equality of conditional laws of $Z$ and $U$ given $\F_T$  implies \eqref{eq:lawZ0equalsU0}.
To show that \eqref{eq:intZequalsU} holds it suffices to show that their c-transition fields are equal.  Indeed, this equality implies that for any $0 \leq v \leq t \leq T$
\[
	0 = P^Z(v,t) - P^U(v,t) = \int_v^t \left( P^Z(v,u)\Gamma^Z - P^U(v,u) \Gamma^U_u \right) du = \int_v^t P^Z(v,u)(\Gamma^Z_u - \Gamma^U_u) du.
\]
%thus for any $t \geq v \geq 0$ it holds
Consequently
\[
P^Z(v,u)(\Gamma^Z_u - \Gamma^U_u) = 0, \quad du \otimes d \P-a.e.
 \ (on \ [v,T]),
\]
since $P^Z(v,u)$ is invertible (cf. \cite[Proposition 3.11]{JakNie2010}). This in turn implies
\eqref{eq:intZequalsU}.
%Now we will show that c-transition fields of $Z$ and $U$ are equal.
%Note that by assumption, for any
%$s \leq t$, $x,y \in S $,  we have that
%\be\lab{eq:Ndim-cond}
%	\P(Z_t  = y, Z_s =x | \F_T )
%=
%	\P(U_t  = y, U_s =x | \F_T )
%\ee
%and that
%\be\lab{eq:1dim-cond}
%\P( Z_s =x | \F_T )
%=
%\P( U_s =x | \F_T ).\ee
%Thus
%\begin{align*}
%	\P(Z_t  = y, Z_s =x | \F_T )
%&
%%=\E ( \I_\set{Z_t = y, Z_s = x} | \F_T )
%=\E ( \E ( \I_\set{Z_t = y, Z_s = x}| \F_T \vee \G_s) | \F_T )
%\\
%&=
%\E ( \I_\set{ Z_s = x}\E ( \I_\set{Z_t = y}| \F_T \vee \G_s) | \F_T )
%=
%\E ( \I_\set{ Z_s = x}p^Z_{xy}(s,t) | \F_T )
%\\
%&=
%\P( Z_s = x | \F_T ) p^Z_{xy}(s,t).
%\end{align*}
%The tower property of conditional expectations and  $\F_t$-measurability of $p^Z_{xy}(s,t)$ yields
%\[
%\P(Z_t  = y, Z_s =x | \F_t )
%=\P( Z_s = x | \F_t ) p^Z_{xy}(s,t).
%\]
%Since analogous formula is valid for $U$, we have that \eqref{eq:Ndim-cond} implies
%\[
%\P( Z_s = x | \F_t ) p^Z_{xy}(s,t)
%=
%\P( U_s = x | \F_t ) p^U_{xy}(s,t)
%\]
%thus by \eqref{eq:1dim-cond}, the tower property of conditional expectations  and  \eqref{tp}  we have that
%\[
%p^Z_{xy}(s,t)=
%p^U_{xy}(s,t).
%\]
This ends the proof.
\finproof
\end{proof}

\section*{Appendix B: Discussion of conditional independence copula}

This appendix provides some technical results needed in Section \ref{CIC}. In this section we denote by $I$ the identity matrix of dimension $|S|$, and by $I_k$ the identity matrix of dimension $|S_k|.$

\begin{proposition}\label{lem:cudolemma}
Suppose that we are given an $N$-tuple of matrix valued processes $\Psi^k_t=[\psi^{k;xy}_t]_{x,y  \in S_k} $, $k=1, \ldots, N$, which  satisfy canonical conditions relative to the pair $(S_k,\FF)$ for every $k=1, \ldots, N$. Then the process $\Lambda$ given in \eqref{eq:LLL} satisfies conditions (CMC-1) and (CMC-2).
\end{proposition}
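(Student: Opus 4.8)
The plan is to write down the entries of the Kronecker sum \eqref{eq:LLL} in closed form and then read off both conditions from that single formula. Since the $(x,y)$-entry of a Kronecker product factorizes coordinatewise, the $k$-th summand $I_1\otimes\cdots\otimes\Psi^k_t\otimes\cdots\otimes I_N$ has $(x,y)$-entry $\psi^{k;x^ky^k}_t\prod_{n\neq k}\I_\set{x^n=y^n}$, where $x=(x^1,\ldots,x^N)$ and $y=(y^1,\ldots,y^N)$. Summing over $k$ gives
\[
\lambda^{xy}_t=\sum_{k=1}^N\psi^{k;x^ky^k}_t\prod_{n\neq k}\I_\set{x^n=y^n},
\]
and every subsequent verification reduces to collapsing the indicator products.

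For (CMC-1) I would fix $k$, fix $x^n\in S_n$ for all $n$ together with $y^k\in S_k$ satisfying $y^k\neq x^k$, and sum the displayed entry formula over all $y^n$ with $n\neq k$. In the summand indexed by $m=k$, the product $\prod_{n\neq k}\I_\set{x^n=y^n}$ collapses the sum to $\psi^{k;x^ky^k}_t$ because each $\sum_{y^n}\I_\set{x^n=y^n}=1$. In every summand indexed by $m\neq k$ the product contains the factor $\I_\set{x^k=y^k}$, which vanishes since $y^k\neq x^k$ is held fixed. Hence the sum equals $\psi^{k;x^ky^k}_t$, which is precisely (CMC-1).

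For (CMC-2) I would check the defining properties of the canonical conditions relative to $(S,\FF)$ in turn. $\FF$-adaptedness is immediate, as each entry of $\Lambda$ is a finite sum of entries of the $\Psi^k$ (the identity factors contribute only the constants $0$ and $1$). For the off-diagonal sign condition, take $x\neq y$: the $k$-th summand survives only when $x^n=y^n$ for all $n\neq k$, which combined with $x\neq y$ forces $x^k\neq y^k$, so any surviving term equals $\psi^{k;x^ky^k}_t\geq 0$; if $x$ and $y$ differ in two or more coordinates no term survives and $\lambda^{xy}_t=0$. For the vanishing row sums, fixing $x$ and interchanging the finite sums gives $\sum_{y\in S}\lambda^{xy}_t=\sum_{k=1}^N\sum_{y^k\in S_k}\psi^{k;x^ky^k}_t=0$, using that each $\Psi^k$ has vanishing rows. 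Finally, the same formula gives $\lambda^{xx}_t=\sum_{k=1}^N\psi^{k;x^kx^k}_t$, whence
\[
\sum_{x\in S}|\lambda^{xx}_t|\leq\sum_{k=1}^N\Big(\prod_{n\neq k}|S_n|\Big)\sum_{x^k\in S_k}|\psi^{k;x^kx^k}_t|,
\]
and integrating over $[0,T]$ keeps the right-hand side finite because each $\int_0^T\sum_{x^k\in S_k}|\psi^{k;x^kx^k}_s|\,ds$ is finite by the canonical conditions for $\Psi^k$.

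There is no real obstacle here: once the coordinatewise entry formula for the Kronecker sum is recorded, the whole proof is bookkeeping. The only point needing care is keeping the index ranges straight in (CMC-1)---in particular remembering that $y^k$ is held fixed rather than summed, and that the constraint $y^k\neq x^k$ is exactly what annihilates the summands with $m\neq k$. I would therefore display the entry formula prominently at the start and invoke it in each of the four checks.
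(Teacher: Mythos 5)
Your proposal is correct and is essentially the paper's argument in scalar form: the paper's own proof rests on the very same entry formula $\lambda^{xy}_t=\sum_{m=1}^N\bigl(\prod_{n\neq m}\I_\set{y^n=x^n}\bigr)\psi^{m;x^my^m}_t$, which it records explicitly to dispose of (CMC-2), and its (CMC-1) verification is the same collapse of the sum, carried out there via Kronecker-product identities applied to indicator vectors rather than entrywise. The only cosmetic difference is the mechanism annihilating the cross terms $m\neq k$ in (CMC-1): you use the fixed constraint $y^k\neq x^k$, whereas the paper uses the vanishing row sums ($\Psi^m_t\I_{S_m}=0$); both are immediate, and your explicit checks of adaptedness, sign, row-sum and integrability conditions simply spell out what the paper leaves to the reader for (CMC-2).
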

\begin{proof}
In what follows, we will use a convention  that for $A \subset \wt S,$ where $\wt S$ is a finite set, the characteristic function
\[
\I_A(j) =
\begin{cases}
1 & \mbox{if}\ j \in A,\\
0 & \mbox{if}\ j \notin A,
\end{cases}
\]
will be interpreted as a vector in $\mathbf{R}^{|\wt S|}$, written as $\I^{\wt S}_A;$ for simplicity, we will also denote  $\I_{\wt S}=\I^{\wt S}_{\wt S}.$

Let us fix $k \in \set{ 1, \ldots, N}$, $x^k , y^k \in S_k$, and $\bar{x} = (\bar{x}^1, \ldots, \bar{x}^N) \in S$ such that $\bar{x}^k = x^k$.
  Now, referring to (CMC-1), we observe that
\[\sum_{\substack{y^n\in S_n,\\ n=1,2,\ldots,N, n \neq k}}\lambda^{(\bar{x}^1,\ldots,\bar{x}^k,\ldots,\bar{x}^N)(y^1,\ldots,y^k,\ldots,y^N)}_t=\big (\I^{S}_\set{\bar{x}}\big )^\top \Lambda_t \mathbf{v}^{y^k},\]
where
\[
\mathbf{v}^{y^k} := \I_{S_1} \otimes \ldots \otimes  \I_{S_{k-1}} \otimes \I^{S_k}_\set{y^k} \otimes  \ldots \otimes  \I_{S_{N}}.
\]

%if $m=k$ then we have
%\begin{align*}
%&(I_1 \otimes \ldots {\otimes} I_{k-1} {\otimes} \Lambda^k_t {\otimes} I_{k+1} {\otimes} \ldots {\otimes} I_n)(\I_{E_1} \otimes \ldots \otimes  \I_{E_{k-1}} \otimes \I_\set{j_k} \otimes  \ldots \otimes  \I_{E_{N}}
%	)\\
%&=(I_1 \I_{E_1}) \otimes \ldots {\otimes}
%(I_{k-1} \I_{E_{k-1}}) {\otimes} (\Lambda^k_t\I_\set{j_k} ) {\otimes}
%(I_{k+1} \I_{E_{k+1}})
%{\otimes} \ldots {\otimes}
%(I_n \I_{E_{N}}) \\
%&=\I_{E_1} \otimes \ldots {\otimes}
% \I_{E_{k-1}} {\otimes} (\Lambda^k_t\I_\set{j_k} ) {\otimes}
%\I_{E_{k+1}}
%{\otimes} \ldots {\otimes}
%\I_{E_{N}} \end{align*}
%%\mn{w inny sposob}
%Since the matrix $\Lambda^{(N)}_t$ is a sum of some matrices we need to multiply
%each summand by $\mathbf{v}^{y^k}$ separately and then add  the result.

Next, we see that
\[\Lambda_t \mathbf{v}^{y^k}=\sum_{m=1}^N\, \Phi^m_t,\]
where
\begin{align*}
&
\Phi^k_t=\left(
(\otimes_{p=1}^{k-1} I_p) \otimes  \Psi^k_t  \otimes (\otimes_{q=k+1}^N I_n)  \right)\left(
(\otimes_{p=1}^{k-1} \I_{S_p}) \otimes  \I^{S_k}_\set{y^k}  \otimes (\otimes_{q=k+1}^N  \I_{S_q})   \right)
\\
&
=\left(
(\otimes_{p=1}^{k-1} I_p \I_{S_p}) \otimes \Psi^k_t   \I^{S_k}_\set{y^k}  \otimes (\otimes_{q=k+1}^N I_q \I_{S_q})   \right)
=\left(
(\otimes_{p=1}^{k-1} \I_{S_p}) \otimes \Psi^k_t   \I^{S_k}_\set{y^k}  \otimes (\otimes_{q=k+1}^N \I_{S_q})   \right).
 \end{align*}
We have, for $m>k$,
\begin{align*}
&
\Phi^m_t=\left(
(\otimes_{p=1}^{m-1} I_p) \otimes  \Psi^m_t  \otimes (\otimes_{q=m+1}^N I_n)  \right)\left(
(\otimes_{p=1}^{k-1} \I_{S_p}) \otimes  \I^{S_k}_\set{y^k}  \otimes (\otimes_{q=k+1}^N  \I_{S_q})   \right)
\\
&
=\left(
(\otimes_{p=1}^{k-1} I_p \I_{S_p})
\otimes (I_p \I^{S_k}_\set{y^k})
\otimes (\otimes_{r=k}^{m-1} I_p \I_{S_p})
 \otimes (\Psi^m_t   \I_{S_m} ) \otimes (\otimes_{q=m+1}^N I_q \I_{S_q})   \right)
\\
&
=\left(
(\otimes_{p=1}^{k-1} \I_{S_p})
\otimes (\I^{S_k}_\set{y^k})
\otimes (\otimes_{r=k}^{m-1} \I_{S_p})
 \otimes (0_{S_m} ) \otimes (\otimes_{q=m+1}^N  \I_{S_q})   \right)
=\otimes_{p=1}^N 0_{S_p}
=0_{\times_{p=1}^N {S_p} }
 \end{align*}
and, for $m<k$,
\begin{align*}
&
\Phi^m_t=\left(
(\otimes_{p=1}^{m-1} I_p) \otimes  \Psi^m_t  \otimes (\otimes_{q=m+1}^N I_n)  \right)\left(
(\otimes_{p=1}^{k-1} \I_{S_p}) \otimes  \I^{S_k}_\set{y^k}  \otimes (\otimes_{q=k+1}^N  \I_{S_q})   \right)
\\
&
=\left(
(\otimes_{p=1}^{m-1} I_p \I_{S_p})
 \otimes (\Psi^m_t   \I_{S_m} )
\otimes (\otimes_{r=m+1}^{k-1} I_r \I_{S_r})
\otimes (I_k \I^{S_k}_\set{y^k})
 (\otimes_{q=k+1}^N I_q \I_{S_q})   \right)
\\
&
=\left(
(\otimes_{p=1}^{m-1} \I_{S_p})
 \otimes (0_{S_m} )
\otimes (\otimes_{r=m+1}^{k-1} \I_{S_r})
\otimes ( \I^{S_k}_\set{y^k})
 (\otimes_{q=k+1}^N \I_{S_q})   \right)
=\otimes_{p=1}^N 0_{S_p}
=0_{\times_{p=1}^N {S_p} }.
 \end{align*}
Consequently,  for any $\bar{x} = (\bar{x}^1, \ldots, \bar{x}^N) \in S$ such that $ \bar{x}^{k} = x^k$ and $y^k \in S_k$,  we have  that
\begin{align*}
\big (\I^{S}_\set{\bar{x}}\big )^\top \Lambda_t \mathbf{v}^{y^k}
&=
\sum_{m=1}^N\, \big (\I^{S}_\set{\bar{x}}\big )^\top  \Phi^m_t
=
\big (\I^{S}_\set{\bar{x}}\big )^\top  \Phi^k_t
\\
&=
\Big(
\prod_{p=1}^{k-1} \I_{S_p}(\bar{x}^p) \Big) \Psi^k_t   \I^{S_k}_\set{y^k}(\bar{x}^k)
\Big(\prod_{q=k+1}^N \I_{S_q}(\bar{x}^q)   \Big)
=\psi^{k;x^ky^k}_t.
\end{align*}
% in the $\bar{x}$-row of this vector we have
%\[
%\left(
%(\otimes_{p=1}^{k-1} \I_{S_p}) \otimes \Psi^k_t   \I^{S_k}_\set{y^k}  \otimes (\otimes_{q=k+1}^N \I_{S_q})   \right)_{(\bar{x}^1, \ldots, \bar{x}^N)}
%=
%\left(
%\prod_{p=1}^{k-1} \I_{S_p}(\bar{x}^p) \right) \Psi^k_t   \I^{S_k}_\set{y^k}(\bar{x}^k)
%\left(\prod_{q=k+1}^N \I_{S_q}(\bar{x}^q)   \right)
%=\psi^{k;x^ky^k}_t
%\]%
%Thus for any $\bar{x} = (\bar{x}^1, \ldots, \bar{x}^N) \in S$ such that $ \bar{x}^{k} = x^k$, $y^k \in S_k$ the sum on the right hand side of (CMC-1) is
%\[
%	(\Lambda^{(N)}_t)
%\left(
%(\otimes_{p=1}^{k-1} \I_{S_p}) \otimes  \I^{S_k}_\set{y^k}  \otimes (\otimes_{q=k+1}^N  \I_{S_q})   \right)_{(\bar{x}^1, \ldots, \bar{x}^N)}
%=
%\psi^{k;x^ky^k}_t.
%\]
This proves that  $\Lambda$ satisfies (CMC-1).

The fact that $\Lambda$ satisfies (CMC-2) follows from the assumption that
 $\Psi^k_t=[\psi^{k;xy}_t]_{x,y  \in S} $, satisfy canonical conditions relative to the pair $(S_k,\FF)$ for every $k=1, \ldots, N$,
and from the following representation of the entries of $\Lambda_t$:
\[
\lambda^{(x^1, \ldots, x^N)(y^1, \ldots,y^N)}_t
%=
%\sum_{m=1}^N
%\I_\set{ y^n =x^n, \forall n \in \set{1,\ldots,N} \setminus \set{ m } } \psi^{m; x^my^m}_t
=
\sum_{m=1}^N
\Big(\prod_{\substack{n=1 \\ n \neq m}}^N\I_\set{ y^n =x^n} \Big) \psi^{m; x^my^m}_t.
\]
\finproof
\end{proof}

\begin{lemma}\label{lem:811}
Suppose that we are given an $N$-tuple of matrix valued processes $\Psi^k_t=[\psi^{k;xy}_t]_{x,y  \in S_k} $, $k=1, \ldots, N$, which  satisfy canonical conditions relative to the pair $(S_k,\FF)$,
and are such that
\be\label{eq:psi-int}
\sum_{x^k \in S_k } \int_0^T \abs{\psi^{k; x^k,x^k}_s} ds < \infty,
\quad
\forall
k=1, \ldots, N.
\ee
Let us fix $s\in [0,T]$, and let $P^{(N)}(s,\cdot)$ be the solution of
\be\label{eq:Pn-0}
d P^{(N)}(s,t) = P^{(N)}(s,t) \Lambda^{(N)}_t dt,
\quad P^{(N)}(s,s) = I,\quad t\in [s,T],
\ee
with $\Lambda^{(N)}=\Lambda$, where $\Lambda$ is given in \eqref{eq:LLL}.
Then, we have that
\begin{align}\label{eq:PN}
P^{(N)}(s,t) = \bigotimes_{k=1}^N P_k(s,t)
\end{align}
where, for $k=1, \ldots,N$,
\[
d P_k(s,t) = P_k(s,t) \Psi^{k}_t dt, \quad P_k(s,s) = I_{k},\quad t\in [s,T].
\]
\end{lemma}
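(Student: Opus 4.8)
The plan is to reduce the claim to a verification, using the uniqueness of solutions of the matrix Kolmogorov forward equation \eqref{eq:Pn-0} together with two standard properties of the Kronecker product. The integrability assumption \eqref{eq:psi-int} guarantees that, for each $k$, the equation $dP_k(s,t)=P_k(s,t)\Psi^k_t\,dt$ with $P_k(s,s)=I_k$ has a unique absolutely continuous solution on $[s,T]$, and the same holds for \eqref{eq:Pn-0}; consequently it suffices to show that $Q(s,t):=\bigotimes_{k=1}^N P_k(s,t)$ solves \eqref{eq:Pn-0}. The two properties I would invoke are the mixed-product rule $(A\otimes B)(C\otimes D)=(AC)\otimes(BD)$ and the Leibniz rule $\tfrac{d}{dt}(F_t\otimes G_t)=\dot F_t\otimes G_t+F_t\otimes\dot G_t$ for absolutely continuous matrix-valued functions, which follows entrywise from the ordinary product rule.

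First I would record the initial condition: since $P_k(s,s)=I_k$, we have $Q(s,s)=\bigotimes_{k=1}^N I_k=I$, as required. I would then proceed by induction on $N$, the case $N=1$ being trivial. Separating the $k=N$ summand in \eqref{eq:LLL} from the rest and factoring the identity matrices (using bilinearity of the Kronecker product) gives the decomposition
\[
\Lambda^{(N)}_t=\Lambda^{(N-1)}_t\otimes I_N+I^{(N-1)}\otimes\Psi^N_t,\qquad I^{(N-1)}:=\bigotimes_{k=1}^{N-1}I_k,
\]
where $\Lambda^{(N-1)}_t$ is the Kronecker sum built from $\Psi^1,\ldots,\Psi^{N-1}$. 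Writing $Q=P^{(N-1)}\otimes P_N$ with $P^{(N-1)}:=\bigotimes_{k=1}^{N-1}P_k$, the inductive hypothesis gives $\tfrac{d}{dt}P^{(N-1)}=P^{(N-1)}\Lambda^{(N-1)}_t$. Applying the Leibniz rule and then the mixed-product rule (together with $P^{(N-1)}=P^{(N-1)}I^{(N-1)}$ and $P_N=P_N I_N$), I would compute
\[
\tfrac{d}{dt}Q=(P^{(N-1)}\Lambda^{(N-1)}_t)\otimes P_N+P^{(N-1)}\otimes(P_N\Psi^N_t)=Q\big(\Lambda^{(N-1)}_t\otimes I_N+I^{(N-1)}\otimes\Psi^N_t\big)=Q\,\Lambda^{(N)}_t.
\]
Together with $Q(s,s)=I$ and uniqueness for \eqref{eq:Pn-0}, this identifies $Q=P^{(N)}$ and closes the induction.

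The computation itself is routine once the two Kronecker identities are in hand, so the only point requiring care --- and the main (mild) obstacle --- is the analytic justification: one must check that the $P_k$ are genuinely absolutely continuous (so that the entrywise Leibniz rule applies $dt$-a.e.) and that \eqref{eq:Pn-0} has a unique solution. Both follow from the local integrability of the diagonal entries of the $\Psi^k$ guaranteed by \eqref{eq:psi-int}, via the standard existence--uniqueness theory for linear matrix ODEs with $L^1$ coefficients; the off-diagonal entries are nonnegative and, by the canonical conditions relative to the pair $(S_k,\FF)$, dominated by the absolute values of the diagonal entries, so integrability of the whole generator reduces to \eqref{eq:psi-int}.
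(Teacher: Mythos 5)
Your proposal is correct and follows essentially the same route as the paper's proof: verification of the candidate $\bigotimes_{k=1}^{N}P_k(s,t)$ against \eqref{eq:Pn-0} combined with uniqueness of solutions, by induction on $N$, using the same splitting $\Lambda^{(N)}_t=\Lambda^{(N-1)}_t\otimes I_N+I^{(N-1)}\otimes\Psi^N_t$ and the mixed-product rule. The only (harmless) difference is that you differentiate the Kronecker product directly via the entrywise Leibniz rule, whereas the paper factors $P^{(N)}\otimes P_{N+1}=\bigl(P^{(N)}\otimes I_{N+1}\bigr)\bigl(I^{(N)}\otimes P_{N+1}\bigr)$ and applies integration by parts together with a commutativity check, so your version slightly streamlines that step (and your explicit justification of absolute continuity and uniqueness from \eqref{eq:psi-int} makes precise what the paper leaves implicit).
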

\begin{proof}
We will verify that $P^{(N)}$ defined by \eqref{eq:PN} satisfies \eqref{eq:Pn-0}, this by uniqueness of solutions of \eqref{eq:Pn-0} will imply desired result.
We will proceed by induction on %$n\in \set{1,\dots,N}.$
$N$. First, we  take $N=2$ and we prove that $P^{(2)}(s,\cdot)$  given as
\[
	P^{(2)}(s,t) := P_1(s,t) {\otimes} P_2(s,t),
\]
satisfies \eqref{eq:Pn-0} which takes  the form
\be\label{eq:Q}
	d P^{(2)}(s,t)=P^{(2)}(s,t)(\Psi^1_t \otimes I_2  + I_1 \otimes \Psi^2_t ) dt, \quad P^{(2)}(s,s) = I.
\ee
 By the mixed-product rule (cf. \cite[Lemma 4.2.10]{HorJoh1994}) we can write  $P^{(2)}(s,t)$ as
\be\label{eq:Q=P1P2}
P^{(2)}(s,t)
=
(P_1(s,t) I_1 )\otimes (I_2 P_2(s,t))
=
Q_1(s,t) Q_2(s,t),
\ee
where
\[
Q_1(s,t)
= P_1(s,t) \otimes  I_2 , \quad
Q_2(s,t)=
  I_1 \otimes P_2(s,t)  .
\]
Thus, to show \eqref{eq:Q} we need to prove that
\[
d(Q_1(s,t)Q_2(s,t)) = (Q_1(s,t)Q_2(s,t)) (\Psi^1_t \otimes I_2  + I_1 \otimes \Psi^2_t )dt .
\]
%One can show that we have
%\[
%dQ_1(s,t)	=Q_1(s,t) (\Psi^1_t \otimes I_2) dt
%, \quad
%dQ_2(s,t)	=Q_2(s,t) (I_1 \otimes \Psi^2_t) dt.
%\]
Since,
\begin{align*}
d Q_1(s,t) &= d (P_1(s,t) \otimes  I_2 ) = (d P_1(s,t)) \otimes  I_2
=
( P_1(s,t) \Psi^1_t  dt) \otimes  I_2
\\
&=( P_1(s,t) \otimes  I_2  ) (\Psi^1_t  \otimes  I_2 )dt
=Q_1(s,t)(\Psi^1_t  \otimes  I_2 )dt.
\end{align*}
and, similarly,
\[dQ_2(s,t)	=Q_2(s,t) (I_1 \otimes \Psi^2_t) dt,\]
then, using integration by parts we get
\begin{align*}
d(Q_1(s,t)Q_2(s,t))
&=
(dQ_1(s,t))Q_2(s,t) + Q_1(s,t)dQ_2(s,t) \\
&=
Q_1(s,t) (\Psi^1_t \otimes I_2)Q_2(s,t)dt + Q_1(s,t)Q_2(s,t) (I_1 \otimes \Psi^2_t) dt
\\
&=Q_1(s,t) Q_2(s,t)(\Psi^1_t \otimes I_2)dt + Q_1(s,t)Q_2(s,t) (I_1 \otimes \Psi^2_t) dt
\\
&=Q_1(s,t) Q_2(s,t)(\Psi^1_t \otimes I_2+I_1 \otimes \Psi^2_t) dt ,
\end{align*}
where the third equality  follows since $Q_2(s,t)$ and $(\Psi^1_t \otimes I_2)$ commute.
Indeed, definition of $Q_2$ and the mixed-product property imply
\[
Q_2(s,t)(\Psi^1_t \otimes I_2) = (I_1 \otimes P_2(s,t))(\Psi^1_t \otimes I_2) = (I_1 \Psi^1_t) \otimes (P_2(s,t)I_2) = \Psi^1_t \otimes P_2(s,t)
\]
and analogously
\[
(\Psi^1_t \otimes I_2) Q_2(s,t) = (\Psi^1_t \otimes I_2) (I_1 \otimes P_2(s,t))
=
(\Psi^1_t I_1) \otimes  (I_2P_2(s,t)) = \Psi^1_t \otimes P_2(s,t).
\]
This demonstrates that $P^{(2)}(s,\cdot)$ satisfies \eqref{eq:Q}. Consequently, in view of the uniqueness of the solution of \eqref{eq:Q}, the result of the lemma is proved in case $N=2.$

Now, let us assume that the result of the lemma holds for some $N \geq 2$.
We want to show that
\[
P^{(N+1)}(s,t) := \bigotimes_{k=1}^{N+1} P_k(s,t)%) \otimes  P_k(s,t)
\]
satisfies
\[
d P^{(N+1)}(s,t) = P^{(N+1)}(s,t) \Lambda^{(N+1)}_t dt,
\]
where
\[
\Lambda^{(N+1)}_t := \sum_{k=1}^{N+1} I_1 \otimes \ldots {\otimes} I_{k-1} {\otimes} \Psi^k_t {\otimes} I_{k+1} {\otimes} \ldots {\otimes} I_{N+1}.
\]
Towards this end we note that
\begin{align*}
P^{(N+1)}(s,t) &= P^{(N)}(s,t)  \otimes  P_{N+1}(s,t)
=
(P^{(N)}(s,t) I^{(N)}) \otimes (I_{N+1} P_{N+1}(s,t)) \\
&=
(P^{(N)}(s,t) \otimes I_{N+1} ) (I^{(N)} \otimes  P_{N+1}(s,t)),
\end{align*}
where the third equality follows from the mixed product rule, and where
\[
	I^{(N)} := \bigotimes_{k=1}^N I_k.
\]
Now, we have that
\begin{align*}
d(P^{(N)}(s,t) \otimes I_{N+1} )
%=
%((P^{(N)}(s,t) \Lambda^{(N)}_t) \otimes I_{N+1} )  dt
&=
(P^{(N)}(s,t) \otimes I_{N+1})(\Lambda^{(N)}_t \otimes I_{N+1} )  dt,\quad
P^{(N)}(s,s) \otimes I_{N+1}  = I^{(N+1)},
\end{align*}
and
\begin{align*}
d (I^{(N)} \otimes  P_{N+1}(s,t))
&= (I^{(N)} \otimes  P_{N+1}(s,t))( I^{(N)} \otimes  \Psi^{N+1}) dt,
\quad I^{(N)} \otimes  P_{N+1}(s,s) = I^{(N+1)} .
\end{align*}
Thus, since  matrices
$(\Lambda^{(N)}_t \otimes I_{N+1} )$  and $ (I^{(N)} \otimes  P_{N+1}(s,t)) $
commute, integration by parts yields
\[
d P^{(N+1)}(s,t)
=
P^{(N+1)}(s,t) (\Lambda^{(N)}_t  \otimes I_{N+1} +  (I_{1} {\otimes} \ldots {\otimes} I_{N}) \otimes \Psi^{N+1}_t) dt, \quad P^{(N+1)}(s,s)  = I^{(N+1)}.
\]
Since we have that
\[
\Lambda^{(N+1)}_t  = \Lambda^{(N)}_t  \otimes I_{N+1} +  (I_{1} {\otimes} \ldots {\otimes} I_{N}) \otimes \Psi^{N+1}_t,
\]
the proof is complete.
%since by induction hypothesis $P^{(n)}(s,t)$ satisfies \eqref{eq:Pn}
%the proof can be reduced to the proof for $n=2$.
%\mn{Czy napewno ?}
%.....
%\mn{wydaje sie ze to sa takie same obliczenia}
\finproof
\end{proof}
%\[
%P^1=\bordermatrix{ &0 &1 \cr
%   0&  p^1_{00} & p^1_{01} \cr
%   1&  p^1_{10}  & p^1_{11}
%}
%\quad
%P^2=\bordermatrix{ &0 &1 \cr
%   0&  p^2_{00} & p^2_{01} \cr
%   1&  p^2_{10}  & p^2_{11}
%}
%\]
%\[
%P^1 \otimes P^2 =
%\begin{pmatrix}
%	p^1_{00}
%\begin{pmatrix}
%	p^2_{00} & p^2_{01} \\
%   p^2_{10}  & p^2_{11}
%\end{pmatrix}
% & p^1_{01} \begin{pmatrix}
%	p^2_{00} & p^2_{01} \\
%   p^2_{10}  & p^2_{11}
%\end{pmatrix}\\
%   p^1_{10} \begin{pmatrix}
%	p^2_{00} & p^2_{01} \\
%   p^2_{10}  & p^2_{11}
%\end{pmatrix} & p^1_{11} \begin{pmatrix}
%	p^2_{00} & p^2_{01} \\
%   p^2_{10}  & p^2_{11}
%\end{pmatrix}
%\end{pmatrix}
%=
%\bordermatrix{ & (0,0) &  (0,1) &(1,0) &(1,1) \cr
%(0,0) & 	p^1_{00} p^2_{00} & p^1_{00} p^2_{01}  & p^1_{01} p^2_{00} & p^1_{01} p^2_{01} \cr
%(0,1) & 	p^1_{00}  p^2_{10}& p^1_{00} p^2_{11}  & p^1_{01} p^2_{10} & p^1_{01} p^2_{11} \cr
%(1,0) &    p^1_{10} p^2_{00} & p^1_{10} p^2_{01} & p^1_{11} p^2_{00} & p^1_{11}  p^2_{01} \cr
%(1,1) &    p^1_{10} p^2_{10} & p^1_{10} p^2_{11} & p^1_{11} p^2_{10}  & p^1_{11}  p^2_{11}
%}
%\]
%so that we have
%\[
%(P^1 \otimes P^2)_{(x^1x^2)(y^1y^2)}
%=
%p^{1}_{x^1y^1}p^{2}_{x^2y^2}
%\]

\begin{proposition}\label{prop:cond-indep-DSMC}
Suppose that $X=(X^1,\ldots, X^N)$ is an $S$-valued  $(\FF, \GG)$-DSMC with c-transition field of the form
\be\lab{eq:Ndim-Kron-prod}
P(s,t) = \bigotimes_{k=1}^N P_k(s,t),
\ee
where $P_k = [p_{k;xy}]_{x,y\in S_k}$ is a stochastic matrix valued  random field on $S_k$, for $k=1, \ldots N$. Moreover  assume that for all $x=(x^1, \ldots, x^N) \in S$ it holds
\be\lab{eq:N-in-dist-cond-ind}
\P \left(\bigcap_{k=1}^N \set{ X^k_{0} = x^k } \Big| \F_T \right)
= \prod_{m=1}^N \P \left( X^k_{0}=x^k  \Big| \F_T \right) \!\!.
\ee
Then, the components $X^1,\ldots, X^N$ of  $X$ are conditionally independent given $\F_T$.
\end{proposition}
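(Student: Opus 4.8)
The plan is to reduce conditional independence of the processes $X^1,\ldots,X^N$ to a factorization of their finite-dimensional conditional laws given $\F_T$, and to derive that factorization from a Chapman--Kolmogorov representation of the $\F_T$-conditional law, combined with the Kronecker structure \eqref{eq:Ndim-Kron-prod} and the initial factorization \eqref{eq:N-in-dist-cond-ind}.

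First I would establish that, conditionally on $\F_T$, the process $X$ has the finite-dimensional law of a (time-inhomogeneous) Markov chain driven by the c-transition field. Precisely, for $0=t_0\le t_1\le\cdots\le t_n\le T$ and states $y_1,\ldots,y_n\in S$, I claim
\[
\P\bigl(X_{t_1}=y_1,\ldots,X_{t_n}=y_n \mid \F_T\bigr)
=\sum_{x_0\in S}\P(X_0=x_0\mid\F_T)\,\prod_{j=1}^{n}p_{y_{j-1}y_j}(t_{j-1},t_j),
\]
with the convention $y_0=x_0$. This would follow by induction on $n$, using the tower property and \eqref{eq:DSMC-def-3}: conditioning on $\F_T\vee\G_{t_{n-1}}$, the event $\{X_{t_1}=y_1,\ldots,X_{t_{n-1}}=y_{n-1}\}$ is $\G_{t_{n-1}}$-measurable since $X$ is $\GG$-adapted, while \eqref{eq:DSMC-def-3} gives $\P(X_{t_n}=y_n\mid\F_T\vee\G_{t_{n-1}})=\sum_{x}\I_{\{X_{t_{n-1}}=x\}}p_{xy_n}(t_{n-1},t_n)$. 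On the relevant event this collapses to the single factor $p_{y_{n-1}y_n}(t_{n-1},t_n)$, which is $\F_T$-measurable by \eqref{tp} and can therefore be pulled out of $\E[\,\cdot\mid\F_T]$, closing the induction (the base case $n=1$ being the same computation with $s=0$).

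Next I would insert the Kronecker structure. Since the $(x,y)$-entry of $\bigotimes_{k=1}^{N}P_k(s,t)$ is $\prod_{k=1}^{N}p_{k;x^ky^k}(s,t)$, each transition factor splits as $p_{y_{j-1}y_j}(t_{j-1},t_j)=\prod_{k=1}^{N}p_{k;y_{j-1}^ky_j^k}(t_{j-1},t_j)$. Combined with \eqref{eq:N-in-dist-cond-ind}, which factorizes $\P(X_0=x_0\mid\F_T)=\prod_{k=1}^{N}\P(X_0^k=x_0^k\mid\F_T)$, the entire sum over $x_0\in\prod_k S_k$ splits across $k$, giving $\P(X_{t_1}=y_1,\ldots,X_{t_n}=y_n\mid\F_T)=\prod_{k=1}^{N}F_k(y_1^k,\ldots,y_n^k)$, where
\[
F_k(y_1^k,\ldots,y_n^k):=\sum_{x_0^k\in S_k}\P(X_0^k=x_0^k\mid\F_T)\prod_{j=1}^{n}p_{k;y_{j-1}^ky_j^k}(t_{j-1},t_j).
\]
Summing the joint identity over the states of all components $n\neq k$ and using that each $P_n$ is row-stochastic (so $\sum_{y_j^n}p_{n;y_{j-1}^ny_j^n}=1$, applied iteratively from $j=n$ down to $j=1$, and $\sum_{x_0^n}\P(X_0^n=x_0^n\mid\F_T)=1$) identifies $F_k$ with the marginal $\P(X^k_{t_1}=y_1^k,\ldots,X^k_{t_n}=y_n^k\mid\F_T)$. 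Thus the joint finite-dimensional conditional law factorizes into the product of the marginal conditional laws.

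Finally, to pass from equal-grid factorization to genuine conditional independence of $X^1,\ldots,X^N$ given $\F_T$, I would handle differing time grids by passing to their common refinement and summing out the unconstrained coordinates; the same row-stochasticity argument shows the factorization is preserved. The cylinder events $\{X^k_{t_1}=y_1^k,\ldots,X^k_{t_n}=y_n^k\}$ form a $\pi$-system generating $\F^{X^k}_T$, so factorization of their $\F_T$-conditional probabilities upgrades, by the standard Dynkin/monotone-class argument for conditional independence, to independence of $\F^{X^1}_T,\ldots,\F^{X^N}_T$ given $\F_T$. The principal technical effort is the inductive Chapman--Kolmogorov identity of the first step (together with the bookkeeping of the common-refinement reduction); once it is in place, the Kronecker and initial-factorization inputs reduce the remainder to essentially algebraic manipulation.
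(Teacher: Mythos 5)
Your proposal is correct and follows essentially the same route as the paper's proof: both reduce conditional independence to factorization of the finite-dimensional $\F_T$-conditional laws on a common time grid, use the chain-rule representation of those laws (the paper cites \cite[Proposition 4.6]{BieJakNie2014a}, which you re-derive by induction from \eqref{eq:DSMC-def-3} using $\F_T$-measurability of the transition entries), split each transition factor entrywise via the Kronecker structure \eqref{eq:Ndim-Kron-prod} together with the initial factorization \eqref{eq:N-in-dist-cond-ind}, and identify the factors as marginal conditional laws by summing out the remaining coordinates via row-stochasticity. The only differences are cosmetic: you work with general $N$ and make explicit the common-refinement and $\pi$-system upgrade that the paper leaves implicit in its opening ``it suffices to prove'' reduction, while the paper presents the computation for $N=2$.
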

\begin{proof}
It suffices to prove that for any $t_1 ,\ldots, t_n \in [0,T]$, and
for any sets $A^m_k \subset S_m$, $m=1, \ldots N$, $k=1, \ldots, n$  it holds that
\be\lab{eq:con-ind-fidis-N}
	\P\left( \bigcap_{m=1}^N \bigcap_{k=1}^n \set{ X^m_{t_k}\in A^m_k } \Big| \F_T \right)
	= \prod_{m=1}^N \P \left(\bigcap_{k=1}^n \set{ X^m_{t_k}\in A^m_k } \Big| \F_T \right) \!\!.
\ee

%\mn{+dyskusja oparta o argumenty z Kallenberga.
%
%Suppose that we have two processes $X^1$, $X^2$ such that its finite dimensional distributions satisfy
%\[
%\P	(X^1_{t_1} \in A_1, \ldots, X^1_{t_d}\in A_d, X^2_{s_1} \in B_k, \ldots, X^2_{s_k} \in B_k)
%=\P	(X^1_{t_1} \in A_1, \ldots, X^1_{t_d}\in A_d) \P (X^2_{s_1} \in B_k, \ldots, X^2_{s_k} \in B_k)
%%
%%	(X^1_{t_1}, \ldots, X^1_{t_d}) (X^2_{s_1}, \ldots, X^2_{s_k})
%\]
%We claim this implies that their laws are indepndent.
%
%Suppose that we have second pair of independent proceses $Y^1, Y^2$ with the laws equal $Y^1 = X^1$ , $Y^2 = X^2$. Then because of independence we have
%\[
%\P	(Y^1_{t_1} \in A_1, \ldots, Y^1_{t_d}\in A_d, Y^2_{s_1} \in B_k, \ldots, Y^2_{s_k} \in B_k)
%=\P	(Y^1_{t_1} \in A_1, \ldots, Y^1_{t_d}\in A_d) \P (Y^2_{s_1} \in B_k, \ldots, Y^2_{s_k} \in B_k)
%%
%%	(X^1_{t_1}, \ldots, X^1_{t_d}) (X^2_{s_1}, \ldots, X^2_{s_k})
%\]
%then because the laws are equal $Y^1 = X^1$ , $Y^2 = X^2$ we have that
%\[
%\P	(X^1_{t_1} \in A_1, \ldots, X^1_{t_d}\in A_d, X^2_{s_1} \in B_k, \ldots, X^2_{s_k} \in B_k)
%=
%\P	(Y^1_{t_1} \in A_1, \ldots, Y^1_{t_d}\in A_d, Y^2_{s_1} \in B_k, \ldots, Y^2_{s_k} \in B_k)
%\]
%In other words the finite dimensional laws of $X$ and $Y$ are equal. This implies that the law of processes $X$,$Y$ are equal and thus the law of $X$ is a product  of the law of  $X^1$ and $X^2$.
%}
%
For simplicity, we will give the proof of \eqref{eq:con-ind-fidis-N} for $N=2$. The proof in the general case proceeds along the same lines and will be omitted.  We prove \eqref{eq:con-ind-fidis-N} in steps.

%We will prove that for $X=(X^1, X^2)$ - $(\FF, \GG)$-DSMC with c-transition field
%\be\lab{eq:Ndim-Kron-prod}
%	P(s,t) = P^1(s,t) \otimes P^2(s,t),
%\ee
%satisfying
%\be\lab{eq:in-dist-cond-ind}
%	\P(X^1_0 =x^1, X^2_0 = x^2 | \F_T )
%=
%	\P(X^1_0 =x^1| \F_T ) 	\P( X^2_0 = x^2 | \F_T ), \quad \forall x=(x^1, x^2) \in S_1 \times S_2
%\ee
%we have that
%\be\lab{eq:con-ind-fidis}
%	\P\left( \bigcap_{k=1}^n \set{ X^1_{t_k}\in A^1_k }\cap \bigcap_{k=1}^n \set{  X^2_{t_k}\in A^2_k} | \F_T \right)
%	= \P \left(\bigcap_{k=1}^n \set{ X^1_{t_k}\in A^1_k } | \F_T \right) \P \left(\bigcap_{k=1}^n \set{X^2_{t_k}\in A^2_k }| \F_T \right)\!\!,
%\ee
%for any sets $A^1_1, \ldots, A^1_n \subset S_1$ and
%$A^2_1, \ldots, A^2_n \subset S_2$.
%

\noindent \underline{Step 1:} Let us first note that \eqref{eq:Ndim-Kron-prod} and definition of the Kronecker product imply that for any $(x^1,x^2), \ (y^1,y^2) \in S_1 \times S_2$ we have that
\begin{equation}\label{eq:tran-mac}
p_{(x^1,x^2)(y^1,y^2)}(s,t)
=
 p_{1;x^1y^1}(s,t)p_{2;x^2y^2}(s,t).
\end{equation}
In addition, as we will show now, if $P_1(s,t)$ and $P_2(s,t)$ satisfy $\F_T$-conditional Chapmann-Kolmogorov equation (cf. \cite[Theorem 3.6]{JakNie2010}), then $(P(s,t))_{0\leq s\leq t \leq T}$ defined by \eqref{eq:Ndim-Kron-prod} satisfies $\F_T$-conditional  Chapmann-Kolmogorov equation as well. Indeed, applying the mixed-product rule to the right hand side of \eqref{eq:Ndim-Kron-prod}  we obtain
\begin{align*}
P(s,t) P(t,u)
&=
(P_1(s,t) \otimes P_2(s,t))(P_1(t,u) \otimes P_2(t,u))
\\
&=
(P_1(s,t)P_1(t,u)) \otimes (P_2(s,t)P_2(t,u))
=
P_1(s,u) \otimes P_2(s,u)=P(s,u).
\end{align*}
\noindent \underline{Step 2:} We will show that $X^1$ and $X^2$ are $(\FF, \GG)$-DSMC with c-transition fields $P_1$ and $P_2$.
Towards this end, we first observe that
\begin{align*}
& \P(X^1_t =y^1| \F_T \vee \G_s) \I_\set{ X^1_s = x^1, X^2_{s} = x^2} =
\I_\set{ X^1_s = x^1, X^2_{s} = x^2} \sum_{y^2 \in S_2} \P(X^1_t =y^1, X^2_t = y^2| \F_T \vee \G_s)\\
& =\I_\set{ X^1_s = x^1, X^2_{s} = x^2} \sum_{y^2 \in S_2}  p_{1;x^1y^1}(s,t)p_{2;x^2y^2}(s,t)
=\I_\set{ X^1_s = x^1, X^2_{s} = x^2} p_{1;x^1y^1}(s,t)
\left(\sum_{y^2 \in S_2}  p_{2;x^2y^2}(s,t)\right)
\\
&=\I_\set{ X^1_s = x^1, X^2_{s} = x^2} p_{1;x^1y^1}(s,t),
\end{align*}
where the second equality follows from \eqref{eq:tran-mac}.
Now, summing this equality over $x^2 \in S_2$ yields
\[
 \P(X^1_t =y^1| \F_T \vee \G_s) \I_\set{ X^1_s = x^1}
= \I_\set{ X^1_s = x^1} p_{1;x^1y^1}(s,t),
\]
which means that $X^1$ is an $(\FF,\GG)$-DSMC with c-transition field $P_1$. Analogously we can prove that $X^2$ is an $(\FF,\GG)$-DSMC with c-transition field $P_2$.

\noindent \underline{Step 3:}  Now, we will prove that \eqref{eq:con-ind-fidis-N} holds.

Towards this end we will first restate  \eqref{eq:con-ind-fidis-N}  in the following equivalent form: for every $y^1_1,\ldots, y^1_n \in S_1$ and $y^2_1,\ldots, y^2_n \in S_2$ it holds
\be\lab{eq:con-ind-fidis-2}
	\P\left( \bigcap_{k=1}^n (X^1_{t_k}, X^2_{t_k}) =(y^1_k,y^2_k) | \F_T \right)
	= \P \left(\bigcap_{k=1}^n \set{X^1_{t_k}=y^1_k }| \F_T \right) \P \left(\bigcap_{k=1}^n \set{X^2_{t_k}=y^2_k} | \F_T \right).
\ee
 By the tower property of conditional expectations, by definition of $(\FF,\GG)$-DSMC, by \cite[Proposition 4.6]{BieJakNie2014a}, and by \eqref{eq:N-in-dist-cond-ind}  we can rewrite the left hand side of \eqref{eq:con-ind-fidis-2} as follows
\allowdisplaybreaks
\begin{align*}
&	\P \Big((X^1_{t_1}, X^2_{t_1}) = (y^1_1,y^2_1 ) ,\ldots,
(X^1_{t_n}, X^2_{t_n}) = (y^1_n,y^2_n)
 | \F_T \Big)
\\
&=
\E \Big(	\P(X^1_{t_1}, X^2_{t_1}) = (y^1_1,y^2_1 ) ,\ldots
(X^1_{t_n}, X^2_{t_n}) = (y^1_n,y^2_n)
 | \F_T \vee \G_0) | \F_T \Big)\\
&=\E\bigg(	\sum_{(y^1_0,y^2_0) \in S_1\times S_2}
\I_\set{X^1_0 =y^1_0, X^2_0 = y^2_0} \prod_{k=1}^n p_{(y^1_{k-1},y^2_{k-1})(y^1_k, y^2_k)}(t_{k-1},t_k) | \F_T \bigg)
\\
&=\sum_{(y^1_0,y^2_0) \in S_1\times S_2} \P\left(	{X^1_0 =y^1_0, X^2_0 = y^2_0} | \F_T \right)  \prod_{k=1}^n p_{(y^1_{k-1},y^2_{k-1})(y^1_k, y^2_k)}(t_{k-1},t_k)
\\
&=\sum_{y^1_0 \in S_1 }
\sum_{y^2_0 \in S_2 }\P\left(	{X^1_0 =y^1_0}| \F_T \right)   \P\left( {X^2_0 = y^2_0} | \F_T \right)
 \prod_{k=1}^n p_{1;y^1_{k-1}y^1_k}(t_{k-1},t_k)
p_{2;y^2_{k-1}, y^2_k}(t_{k-1},t_k)
\\
&=
\bigg(
\sum_{y^1_0 \in S_1 }
\P\left(	{X^1_0 =y^1_0}| \F_T \right)
\ \prod_{k=1}^n p_{1;y^1_{k-1}y^1_k}(t_{k-1},t_k)
\bigg)
%\\
%& \quad
\bigg(
\sum_{y^2_0 \in S_2 }
\P\left( {X^2_0 = y^2_0} | \F_T \right)
 \prod_{k=1}^n
p_{2;y^2_{k-1}, y^2_k}(t_{k-1},t_k)
\bigg). \end{align*}
\begin{center}

\end{center}
Summing the above equality over all $y^2_1 \ldots, y^2_n \in S_2$ yields
\[
\P(X^1_{t_1} =y^1_1, \ldots, X^1_{t_n} =y^1_n | \F_T ) =
\sum_{y^1_0 \in S_1 }
\P\left(	{X^1_0 =y^1_0}| \F_T \right)
 \prod_{k=1}^n P^1_{y^1_{k-1}y^1_k}(t_{k-1},t_k).
\]
In analogous way we obtain that
\[
\P(X^2_{t_1} =y^2_1, \ldots, X^2_{t_n} =y^2_n | \F_T )=
\sum_{y^2_0 \in S_2 }
\P\left( {X^2_0 = y^2_0} | \F_T \right)
 \prod_{k=1}^n
P^2_{y^2_{k-1}, y^2_k}(t_{k-1},t_k) .
\]
This ends the proof of \eqref{eq:con-ind-fidis-2}.
\finproof
\end{proof}
\begin{proposition}\label{cor:cond-ind}
Let $\Lambda$ be given by \eqref{eq:LLL}. Suppose  that \eqref{eq:N-in-dist-cond-ind} is satisfied. Then, the components $X^1,\ldots, X^N$ of  $X$ are conditionally independent given $\F_T$.
\end{proposition}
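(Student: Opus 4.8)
The plan is to obtain this corollary as an immediate consequence of Lemma \ref{lem:811} and Proposition \ref{prop:cond-indep-DSMC}, which together supply exactly the two structural ingredients those results require: a Kronecker-product form for the c-transition field of $X$, and the conditional independence of the initial law. First I would recall that the process $X$ under consideration is the $(\FF,\FF^X)$--CDMC with $\FF$-intensity $\Lambda$ constructed in Section \ref{CIC}; being an $(\FF,\FF^X)$--DSMC, it possesses a c-transition field $P(s,t)$, and by the forward Kolmogorov equation \eqref{eq:INT-trans-prob-forward} this field is, for fixed $s$, the unique solution of $dP(s,t)=P(s,t)\Lambda_t\,dt$ with $P(s,s)=\mathrm{I}$.

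Next I would invoke Lemma \ref{lem:811}. Since $\Lambda$ is precisely the Kronecker sum \eqref{eq:LLL}, that lemma identifies the unique solution of the forward equation as
\[
P(s,t)=\bigotimes_{k=1}^N P_k(s,t),
\]
where each $P_k(s,\cdot)$ solves $dP_k(s,t)=P_k(s,t)\Psi^k_t\,dt$ with $P_k(s,s)=I_k$. Because each $\Psi^k$ satisfies the canonical conditions relative to $(S_k,\FF)$, the matrix $P_k(s,t)$ is a genuine stochastic matrix-valued random field on $S_k$. Thus the c-transition field of $X$ has exactly the Kronecker-product shape \eqref{eq:Ndim-Kron-prod} required by Proposition \ref{prop:cond-indep-DSMC}.

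Finally, I would apply Proposition \ref{prop:cond-indep-DSMC} directly: $X$ is an $(\FF,\FF^X)$--DSMC whose c-transition field factorizes as in \eqref{eq:Ndim-Kron-prod}, and by hypothesis \eqref{eq:N-in-dist-cond-ind} its $\F_T$-conditional initial law factorizes across the components. The proposition then yields that $X^1,\ldots,X^N$ are conditionally independent given $\F_T$, which is the assertion.

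The argument is a genuine corollary rather than an independent proof, so there is no serious obstacle; the only points needing care are bookkeeping ones. I would make sure that the c-transition field of $X$ really coincides with the solution of the forward equation driven by $\Lambda$ (so that Lemma \ref{lem:811} applies to that specific field), and that the factors $P_k$ delivered by the lemma qualify as stochastic matrix-valued random fields in the precise sense demanded by Proposition \ref{prop:cond-indep-DSMC}. Both follow from the cited results together with the canonical conditions on the $\Psi^k$, so no new estimates are required.
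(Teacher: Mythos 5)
Your proposal is correct and follows exactly the paper's own (one-line) proof: the paper likewise deduces the assertion immediately from Lemma \ref{lem:811} and Proposition \ref{prop:cond-indep-DSMC}. Your additional bookkeeping---identifying the c-transition field of $X$ with the unique solution of the forward Kolmogorov equation \eqref{eq:INT-trans-prob-forward} driven by $\Lambda$, so that Lemma \ref{lem:811} yields the factorization \eqref{eq:Ndim-Kron-prod}, and then invoking \eqref{eq:N-in-dist-cond-ind} for the initial law---simply makes explicit the steps the paper leaves implicit.
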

\begin{proof}
The assertion follows immediately from Lemma \ref{lem:811} and Proposition \ref{prop:cond-indep-DSMC}  .
\end{proof}

\begin{corollary}
Let $X=(X^1, \ldots , X^N)$ be an $(\FF, \FF^X)$-DSMC. Suppose that \eqref{eq:Ndim-Kron-prod} and \eqref{eq:N-in-dist-cond-ind} hold. If additionally
\be\label{eq:cosik-init}
\P(X^k_0 =x^k | \F_T ) =\P(X^k_0 =x^k | \F_0 ), \quad \forall x^k \in S_k, \ k=1, \ldots, N,
\ee
then the components of $X$ are conditionally independent $(\FF, \FF^X)$-CMCs.
\end{corollary}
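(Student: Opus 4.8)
The plan is to split the two conclusions---conditional independence and the conditional Markov property---and to read off each from a result already at hand. First I would apply Proposition~\ref{prop:cond-indep-DSMC}: since $X$ is an $(\FF,\FF^X)$-DSMC whose c-transition field factorises as in \eqref{eq:Ndim-Kron-prod} and whose initial law is $\F_T$-conditionally independent in the sense of \eqref{eq:N-in-dist-cond-ind}, that proposition gives at once that the components $X^1,\ldots,X^N$ are conditionally independent given $\F_T$. This settles the ``conditionally independent'' half of the statement.

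For the ``$(\FF,\FF^X)$-CMC'' half, the useful point is that Step~2 of the proof of Proposition~\ref{prop:cond-indep-DSMC} proves more than independence: using the Kronecker factorisation \eqref{eq:Ndim-Kron-prod} it shows that each component $X^k$ is itself an $(\FF,\FF^X)$-DSMC with c-transition field $P_k$---crucially with respect to the full filtration $\FF^X$, not merely $\FF^{X^k}$. I would then upgrade each $X^k$ from a DSMC to a CMC exactly as in Step~2 of the proof of Theorem~\ref{thm:weak-Mk}: the field $P_k$ carries an $\FF$-intensity $\Psi^k$ via the Kolmogorov equations \eqref{eq:INT-trans-prob-backward}--\eqref{eq:INT-trans-prob-forward}, so $X^k$ is an $(\FF,\FF^X)$-DSMC admitting an intensity; because $X$ is an $(\FF,\FF^X)$-DSMC, \cite[Corollary~4.7]{BieJakNie2014a} gives that $\FF$ is immersed in $\FF\vee\FF^X$; and hypothesis \eqref{eq:cosik-init} supplies the initial-law condition $\P(X^k_0=x^k\mid\F_T)=\P(X^k_0=x^k\mid\F_0)$. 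With these three facts, \cite[Proposition~4.13]{BieJakNie2014a} applies to each $X^k$ and yields that $X^k$ is an $(\FF,\FF^X)$-CMC. Combined with the previous paragraph, the components are conditionally independent $(\FF,\FF^X)$-CMCs.

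The step I expect to demand the most care is the passage from the DSMC property to the CMC property with respect to the full filtration $\FF^X$ rather than the component filtration $\FF^{X^k}$. In general a marginal of a multivariate DSMC is only an $(\FF,\FF^{X^k})$-DSMC (weak consistency); what saves us is that the Kronecker factorisation \eqref{eq:Ndim-Kron-prod} forces $X^k$ to stay a DSMC even when the entire vector $X$ is observed, and that \eqref{eq:cosik-init} is precisely the extra initial-law hypothesis needed for \cite[Proposition~4.13]{BieJakNie2014a} to turn this into the conditional Markov property---without it the upgrade may fail, as the strong-consistency examples in Section~\ref{sec:SMC} illustrate. The only routine point left to verify is that each $P_k$ genuinely admits an intensity; in the setting of interest this is inherited from the factorised intensity $\Lambda$ of \eqref{eq:LLL}, whose $k$-th Kronecker summand is $\Psi^k$.
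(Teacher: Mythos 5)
Your proposal is correct, and it uses the same three ingredients as the paper --- Proposition~\ref{prop:cond-indep-DSMC}, the immersion of $\FF$ in $\FF\vee\FF^X$ via \cite[Corollary~4.7]{BieJakNie2014a}, and the DSMC-to-CMC upgrade of \cite[Proposition~4.13]{BieJakNie2014a} --- but it deploys them in a different order of application. The paper's proof is two lines: it derives the immersion from \eqref{eq:cosik-init} \emph{together with} \eqref{eq:N-in-dist-cond-ind}, applies Proposition~4.13 to conclude that the joint process $X$ is an $(\FF,\FF^X)$-CMC, and invokes Proposition~\ref{prop:cond-indep-DSMC} for conditional independence, leaving the componentwise CMC reading implicit. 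You instead apply the upgrade componentwise, exploiting the observation (correct, and genuinely contained in Step~2 of the proof of Proposition~\ref{prop:cond-indep-DSMC}) that the Kronecker factorisation \eqref{eq:Ndim-Kron-prod} makes each $X^k$ an $(\FF,\FF^X)$-DSMC with c-transition field $P_k$, with respect to the full filtration and not merely $\FF^{X^k}$; this mirrors Step~2 of the proof of Theorem~\ref{thm:weak-Mk} and has the virtue of delivering exactly what the statement asserts, namely that each \emph{component} is an $(\FF,\FF^X)$-CMC, rather than leaving that to be extracted from the joint CMC property.

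Two small points of bookkeeping. First, you attribute the immersion to the DSMC property of $X$ alone; as the paper's own proof of this corollary makes explicit, Corollary~4.7 also needs the initial-law condition for the \emph{joint} variable $X_0$, which here follows from \eqref{eq:cosik-init} combined with the product structure \eqref{eq:N-in-dist-cond-ind} --- both hypotheses are in your hands, so this is a misrouting of hypotheses rather than a gap, but it should be stated. Second, your final paragraph about each $P_k$ admitting an intensity is a detour the paper does not take: the corollary as stated hypothesises only the factorisation \eqref{eq:Ndim-Kron-prod}, with no intensities in sight, and the paper's route through immersion and Proposition~4.13 never needs them; your remark that the intensity is inherited from \eqref{eq:LLL} is fine in the intended application but is not available, and fortunately not needed, at the level of generality of the corollary itself.
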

\begin{proof}
The assumption \eqref{eq:cosik-init} and \eqref{eq:N-in-dist-cond-ind} imply by \cite[Corollary 4.7]{BieJakNie2014a}
that $\FF$ is $\P$-immersed in $\FF \vee \FF^X$. Thus using \cite[Proposition 4.13]{BieJakNie2014a} we conclude that $X$ is $(\FF,\FF^X)$-CMC. The conditional independence of components of $X$ follows from Proposition \ref{prop:cond-indep-DSMC}.
\end{proof}

\subsection*{Acknowledgments}
Research of T.R. Bielecki was partially supported by NSF grant DMS-1211256.

\end{document}